\documentclass[12pt, reqno]{amsart}

\usepackage{amssymb, amsmath, amsthm}
\usepackage{hyperref, enumitem}
\usepackage{thmtools}
\usepackage[capitalize, nameinlink]{cleveref}
\usepackage[space]{cite}
\usepackage[centering, margin=1.1in]{geometry}
\usepackage{xcolor, graphicx}
\usepackage{bbm}
\usepackage{mathrsfs}
\usepackage{pgf}
\usepackage{caption}
\usepackage{subcaption}
\usepackage{comment}
\usepackage[utf8]{inputenc}\DeclareUnicodeCharacter{2212}{-}

\usepackage{mathrsfs}
\usepackage{amssymb}
\usepackage{dsfont}
\usepackage{verbatim}
\usepackage{url}
\usepackage{mathtools}
\usepackage{etoolbox}
\usepackage{leftidx}

\def\N{\mathbb {N}}
\def\Z{\mathbb {Z}}
\def\Q{\mathbb {Q}}
\def\R{\mathbb {R}}
\def\C{\mathbb {C}}

\def\isom{\simeq}

\def\bs{\backslash}

\DeclareMathOperator{\Ad}{Ad}

\DeclareMathOperator{\diag}{diag}

\DeclareMathOperator{\Gal}{Gal}

\DeclareMathOperator{\Isom}{Isom}

\def\sgn{\qopname\relax o{sgn}}

\DeclareMathOperator{\supp}{supp}

\DeclareMathOperator{\vol}{vol}

\newcommand\abs[1]{\left| {#1} \right|}
\newcommand\norm[1]{\left\Vert {#1} \right\Vert}

\newcommand{\xdashrightarrow}[2][]{\ext@arrow 0359\rightarrowfill@@{#1}{#2}}

\def\cprime{\ensuremath{'}}

\DeclareRobustCommand
  \rddots{\mathinner{\mkern1mu\raise\p@
    \vbox{\kern7\p@\hbox{.}}\mkern2mu
    \raise4\p@\hbox{.}\mkern2mu\raise7\p@\hbox{.}\mkern1mu}}

\newcommand\PGSV{\mathrm{PGSV}}
\newcommand\GSV{\mathrm{GSV}}
\newcommand\SV{\mathrm{SV}}
\newcommand\SL{\mathrm{SL}}
\newcommand\GL{\mathrm{GL}}

\newcommand\Sp{\mathrm{Sp}}
\newcommand\GSp{\mathrm{GSp}}
\newcommand\PGSp{\mathrm{PGSp}}

\newcommand\Spin{\mathrm{Spin}}

\newcommand\SO{\mathrm{SO}}

\newcommand\GSpin{\mathrm{GSpin}}

\newcommand\CC{\mathbb{C}}

\newcommand\HH{\mathbb{H}}

\newcommand\QQ{\mathbb{Q}}
\newcommand\RR{\mathbb{R}}

\newcommand\ZZ{\mathbb{Z}}

\newcommand\bH{\mathbf{H}}

\DeclareMathAlphabet{\mathcal}{OMS}{cmsy}{m}{n}

\newcommand\cF{\mathcal{F}}

\newcommand\cH{\mathcal{H}}

\newcommand\cL{\mathcal{L}}

\newcommand\cN{\mathcal{N}}

\newcommand\cP{\mathcal{P}}

\newcommand\cT{\mathcal{T}}

\newcommand\sL{\mathscr{L}}

\newcommand\diff{\mathop{}\!\mathrm{d}}

\newcommand\dcross{\diff^{\kern-2pt\raisebox{1pt}{$\times$}}\kern-8pt}

\newcommand\uG{\underline{G}}
\newcommand\uH{\underline{H}}

\newcommand\blfootnote{\xdef\@thefnmark{}\@footnotetext}

\DeclareFontFamily{U}{wncy}{}
\DeclareFontShape{U}{wncy}{m}{n}{<->wncyr10}{}
\DeclareSymbolFont{mcy}{U}{wncy}{m}{n}
\DeclareMathSymbol{\Sha}{\mathord}{mcy}{"58}

\renewcommand{\H}{\mathbb{H}}

\newcommand{\uV}{\underline{V}}

\newcommand{\1}{\mathbbm{1}}

\newcommand{\bi}{\mathbf{i}}
\newcommand{\bj}{\mathbf{j}}
\newcommand{\bk}{\mathbf{k}}

\renewcommand{\epsilon}{\varepsilon}

\DeclareFontFamily{U}{mathx}{\hyphenchar\font45}
\DeclareFontShape{U}{mathx}{m}{n}{
      <5> <6> <7> <8> <9> <10>
      <10.95> <12> <14.4> <17.28> <20.74> <24.88>
      mathx10
      }{}
\DeclareSymbolFont{mathx}{U}{mathx}{m}{n}
\DeclareFontSubstitution{U}{mathx}{m}{n}
\DeclareMathAccent{\widecheck}{0}{mathx}{"71}
\DeclareMathAccent{\wideparen}{0}{mathx}{"75}

\newcommand{\autoalign}[1]{
  \begin{minipage}{\linewidth}
    \setlength{\parindent}{0pt}
    \setlength{\parfillskip}{0pt} 
    \linespread{1.2}\selectfont % Add vertical spacing
    \sloppy
    $\displaystyle #1$
  \end{minipage}
}

\DeclareMathOperator{\Pit}{Pit}
\DeclareMathOperator{\Iso}{Iso}

\declaretheorem[name=Theorem]{theorem}
\declaretheorem[name=Question]{question}
\declaretheorem[name=Lemma]{lemma}
\declaretheorem[name=Proposition]{proposition}
\declaretheorem[name=Corollary]{corollary}
\declaretheorem[name=Definition]{definition}

\declaretheorem[name=Remark]{remark}

\declaretheorem[name=Claim, numbered=no]{claim*}

\newlist{theoremlist}{enumerate}{1}
\setlist[theoremlist]{label=(\alph{theoremlisti}),
                  ref=\thetheorem \ (\alph{theoremlisti}),
                  noitemsep}
                  
\newlist{lemmalist}{enumerate}{1}
\setlist[lemmalist]{label=(\alph{lemmalisti}),
                  ref=\thelemma \ (\alph{lemmalisti}),
                  noitemsep}
                  
\newlist{propositionlist}{enumerate}{1}
\setlist[propositionlist]{label=(\alph{propositionlisti}),
                  ref=\theproposition \ (\alph{propositionlisti}),
                  noitemsep}

\newlist{corollarylist}{enumerate}{1}
\setlist[corollarylist]{label=(\alph{corollarylisti}),
                  ref=\thecorollary \ (\alph{corollarylisti}),
                  noitemsep}
                                                      
\Crefname{listthm}{Theorem}{Theorems}
\Crefname{listlem}{Lemma}{Lemmas}
\Crefname{listprop}{Proposition}{Propositions}
\Crefname{listcor}{Corollary}{Corollaries}

\addtotheorempostheadhook[theorem]{\crefalias{theoremlisti}{listthm}}
\addtotheorempostheadhook[lemma]{\crefalias{lemmalisti}{listlem}}
\addtotheorempostheadhook[proposition]{\crefalias{propositionlisti}{listprop}}
\addtotheorempostheadhook[corollary]{\crefalias{corollarylisti}{listcor}}

\crefname{section}{Section}{Sections}
\Crefname{section}{Section}{Sections}

\crefname{appendix}{Appendix}{Appendices}
\Crefname{appendix}{Appendix}{Appendices}

\begin{document}

\title{Mass equidistribution for lifts on hyperbolic $4$-manifolds}

\author{Alexandre de Faveri}
\address{EPFL SB MATH, Station 10, 1015 Lausanne, Switzerland}
\email{\url{alexandre.defaveri@epfl.ch}}
\thanks{A.F.\ and Z.S.\ acknowledge support by the NSF grant DMS-1926686.}

\author{Zvi Shem-Tov}
\address{Department of Mathematics, Ben-Gurion University
of the Negev, Be’er Sheva, Israel}
\email{\url{zvishem@bgu.ac.il}}
\thanks{Z.S.\ acknowledges support by the ERC grant HomDyn (grant no.\ 833423).}

\begin{abstract}
    This paper is part of a broader project to establish the quantum unique ergodicity conjecture for Hecke--Maass forms on arithmetic hyperbolic $4$-manifolds. The conjecture is known in dimensions $2$ and $3$, but dimensions $4$ and higher present a new difficulty: ruling out concentration along certain homogeneous submanifolds whose stabilizers are non-tempered.
    
    We overcome it here for the Pitale lifts, a family of non-holomorphic analogues of Saito--Kurokawa lifts. Our main new ingredient is a construction of an amplifier with exceptional geometric properties. To the best of our knowledge, this is the first successful use of amplification to rule out concentration along a non-tempered subgroup. 
\end{abstract}

\maketitle

%%%%%%%%%%%%%%%%%%%%%%%%%%%%%%%%%%%%%%%%%%%%%%%%%%%%%%%

\section{Introduction}

\subsection{Background}

A fundamental result in quantum chaos is the celebrated quantum ergodicity (QE) theorem of {\v S}nirel{\cprime}man \cite{Schnirelman:Avg_QUE}, Zelditch \cite{Zelditch:SL2_Lift_QE}, and Colin de Verdi{\`e}re \cite{CdV:Avg_QUE}. Let $M$ be a compact Riemannian manifold with ergodic geodesic flow, and let $\phi_j$ be any orthonormal basis of $L^2(M)$ consisting of Laplace eigenfunctions. For each $j$ define the probability measure
\begin{equation*}
    d\mu_j(x)=\abs{\phi_j(x)}^2 d\vol_M(x),
\end{equation*}
where $\vol_M$ is the standard probability measure on $M$. The QE theorem states that there exists a subsequence $j_k$ of density $1$ such that $\mu_{j_k}$ converges weak-$*$ to $\vol_M$ as $k\to\infty$. 

While in this generality the full sequence $\mu_j$ might not converge \cite{Hassell:NonUniqueBilliard}, Rudnick and Sarnak conjectured \cite{RudnickSarnak:Conj_QUE} that if $M$ has negative sectional curvature, then $\mu_j$ converges weak-$*$ to $d\vol_M$. This is called the quantum unique ergodicity (QUE) conjecture.   

In the seminal work \cite{Lindenstrauss:SL2_QUE}, Lindenstrauss proved QUE for Hecke--Maass forms\footnote{For progress on the non-arithmetic case, see \cite{KM25, DJ18, AN07}.} on congruence hyperbolic surfaces using his striking measure rigidity theorem for diagonal actions on homogeneous spaces. In fact, his result applies also for non-compact congruence surfaces such as $\SL_2(\Z)\bs\HH^2$, and QUE for those surfaces follows after combining with subsequent work of Soundararajan \cite{Sound:EscapeOfMass} showing that any weak-$*$ limit of the $\mu_j$ is a probability measure (``non-escape of mass''). 

Measure rigidity tools due to Einsiedler and Lindenstrauss \cite{EinsiedlerLindenstrauss:RankOne} also played a central role in \cite{ShemTovSilberman:HH_AQUE_preprint}, where the QUE conjecture was established for sequences of Hecke--Maass forms on quotients of $\HH^3$. However, unlike in the case of surfaces, the measure classification results allow for measures giving positive mass to proper submanifolds, a phenomenon called ``scarring''. The main challenge in \cite{ShemTovSilberman:HH_AQUE_preprint} was eliminating this possibility. 

For Hecke--Maass forms in $\H^4$, Shem-Tov and Silberman \cite{ShemTovSilberman:AQUE4d} combined the methods of \cite{ShemTovSilberman:HH_AQUE_preprint} with results of Marshall \cite{marshallupperboundsmaassforms} to establish that the only possible obstruction to QUE is scarring along totally geodesic submanifolds of dimension $3$. For non-uniform quotients this also requires non-escape of mass, which was recently proved by the authors \cite{deFaveriShemTov:NonEscape4d}. This raises the following question.

\begin{question}\label{question1}
    Let $M$ be a congruence quotient $M=\Gamma\bs\HH^4$, and let $\mu$ be a weak-$*$ limit of the probability measures $|\phi_j(z)|^2 \, d\vol_M(z)$, where the $\phi_j$ are Hecke--Maass forms on $M$. Can $\mu$ give positive measure to a proper submanifold of $M$?
\end{question}

The existing general techniques for establishing non-concentration of Hecke--Maass forms, based on the \emph{amplification method}, fall short of addressing this problem. The reason can be understood in light of the work \cite{marshallupperboundsmaassforms} of Marshall, which gives a systematic way to obtain non-concentration on submanifolds corresponding to orbits of \emph{weakly-small} subgroups (see \cref{smallness-def}) of the isometry group. The challenge in answering \cref{question1} comes from the fact that, unlike in dimensions $2$ and $3$, the isometry group of $\HH^4$ contains subgroups which are not weakly-small (e.g.\ $\SO(1,3)\subset\SO(1,4)$).

The notion of weak-smallness is a slight strengthening of that of \emph{temperedness} (see \cite{BK15, marshallupperboundsmaassforms} for further discussion of this notion). A reductive subgroup $\uH\subset \uG$ is called \emph{tempered} if it satisfies an inequality of the form  
\begin{equation}\label{eq:weak-smallness-intro}
    \norm{\eta}_{\uH}^*\le\frac{1}{2}\norm{\eta}_{\uG}^*
\end{equation}
for all cocharacters $\eta$ of a maximal split torus in $\uH$ (see \cref{smallness-def} for details). 
Let $G_p$ and $H_p$ denote respectively the groups of $\QQ_p$-points of $\uG$ and $\uH$, and $K_p$ be a compatible maximal compact subgroup of $G_p$. Then $p^{2\norm{\eta}_{\uH}^*}$ is roughly the size of the intersection of $K_p\eta(p)K_p$ with $H_p$ in $G_p/K_p$. 
The constant $\frac{1}{2}$ appears in \eqref{eq:weak-smallness-intro} due to the fact that the Hecke operator $\tau = \mathbf{1}_{K_p \eta(p) K_p}$ (viewed as a convolution kernel) acts on a typical Hecke eigenfunction with eigenvalue $\ll \norm{\tau}_2 \asymp p^{\norm{\eta}^*_{\uG}}$. However, for some subgroups $\uH \subset \uG$ that arise in our context (those locally isomorphic to $\SO(1,3)$), we only have the weaker bound 
\begin{equation}\label{eq:weak-smallness-intro2}
    \norm{\eta}_{\uH}^*\le\frac{2}{3}\norm{\eta}_{\uG}^*.
\end{equation}
For Hecke eigenvalues of generic size, the weaker bound \eqref{eq:weak-smallness-intro2} is insufficient for existing amplification methods.

%%%%%%%%%%%%%%%%%%%%%%%%%%%%%%%%%%%%%%%%%%%%%%%%%%%%%%%

\subsection{Beyond weak-smallness: non-concentration for lifts}

In this paper we answer \cref{question1} in the negative for the sequence of Pitale lifts \cite{Pit05}, establishing QUE for these forms. 

The Pitale lift associates to each Hecke--Maass form of weight $1/2$ (in the Kohnen plus space) a Hecke--Maass form on a congruence quotient of hyperbolic $4$-space\footnote{In other words, it is a lift from the metaplectic group $\widetilde{\SL}_2$ to the spin group $\GSpin(1,4)$. By the Weyl law, the number of Pitale lifts with Laplace eigenvalue up to $\lambda$ is $\asymp \lambda$, while the total number of Hecke--Maass forms is $\asymp \lambda^2$.}. It is a non-holomorphic version of the Saito--Kurokawa lift described by Duke and Imamo\=glu \cite{DI96}, and provides some of the simplest counterexamples to the naive Ramanujan conjecture \cite{Pit05, CLPS91}. 

The analogous problem of equidistribution in the weight-aspect setting of Saito--Kurokawa lifts in $\Sp_4(\R)$ was considered by Cogdell and Luo \cite{CL11}. In that aspect, mass equidistribution was recently proved by J\"a\"asaari, Lester, and Saha \cite{JLS24} under the assumption of GRH, relying on period formulae (cf.\ \cite{LY14, BKY13, BC22}). As noted above, our approach is different in nature and unconditional, relying initially on measure rigidity tools (which are not applicable in the weight aspect).

Pitale lifts are non-tempered, so some of their Hecke eigenvalues are exceptionally large; however, the eigenvalues are still not large enough to exploit \eqref{eq:weak-smallness-intro2} using existing methods. To overcome this difficulty, we construct Hecke operators that simultaneously amplify a given spectral parameter and have an exceptionally small intersection with the relevant orbits, \emph{much smaller than expected from consideration of their individual components}. This construction forms the core of the paper.

More broadly, the existence of such operators demonstrates that temperedness (and in particular weak-smallness) is not a sharp condition for the applicability of the amplification method. In ongoing joint work with Simon Marshall, we intend to use this observation as a starting point to establish QUE for arbitrary sequences of Hecke--Maass forms on $\Gamma\bs\HH^4$. We also expect the ideas developed in this paper to be useful in other contexts, such as the sup-norm problem for Hecke--Maass forms.

%%%%%%%%%%%%%%%%%%%%%%%%%%%%%%%%%%%%%%%%%%%%%%%%%%%%%%%
 
\subsection{Statement of results}\label{sec:statement}

Consider the upper half-space model for hyperbolic $4$-space, given by
\begin{equation*}
    \mathbb{H}^4=\{z=(x_0, x_1, x_2, y)\in\R^4\mid y>0\}
\end{equation*}
with the metric $ds^2 = y^{-2}(dx_0^2+dx_1^2+dx_2^2+dy^2)$. Let $\Iso^+(\H^4)$ be the group of orientation-preserving isometries of $\H^4$. It is generated by translations $t_\beta$ and the inversion $s$ given by
\begin{align*}
    t_\beta:z\mapsto z+\beta \qquad \qquad \text{ and } \qquad \qquad s:z\mapsto -\frac{\overline{z}}{|z|^2},
\end{align*}
where $\beta\in V^3:=\{z\in \R^4\mid y=0\}$ and  $\overline{z} := (x_0,-x_1,-x_2,-y)$. We will let $\Gamma$ be a certain (congruence) group $\SV_2(\Z)$ which will be defined later. The important point for now is that its image in $\Iso^+(\H^4)$ is the non-uniform lattice generated by $s$ and $t_\beta$, for $\beta \in V^3 \cap \ZZ^4$. Denote the induced probability measure on $\Gamma\bs \H^4$ by $dz$.

In \cite{Pit05} Pitale constructed a sequence of Hecke--Maass cusp forms on $\Gamma\bs \HH^4$ lifted from Hecke--Maass cusp forms on $\HH^2$ of weight $1/2$ for $\Gamma_0(4)$. These are the main objects investigated in this paper, and we will review their key properties in \cref{sec:Pitale_lift}. Let $\{\psi_n\}_{n\geq 1}$ denote the sequence of all Pitale lifts, specified in \cref{def:Pitale_lifts}. They are ordered by Laplace eigenvalue and normalized by $\norm{\psi_n}_2 = 1$. Define the probability measures $\mu_n$ on $\Gamma\backslash \HH^4$ by
\begin{equation*}
    d \mu_n = \abs{\psi_n}^2 dz.
\end{equation*}
The following theorem is our main result. 

\begin{theorem}[QUE for Pitale lifts]\label{mainthm}
    The probability measures $\mu_n$ converge in the weak-$*$ topology to the uniform probability measure on $\Gamma\backslash \HH^4$.   
\end{theorem}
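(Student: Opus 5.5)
The proof reduces, via results already recorded in the introduction, to one non-concentration statement, which I then prove by amplification. Let $\mu$ be any weak-$*$ limit of a subsequence of $\mu_n$. By the non-escape of mass result \cite{deFaveriShemTov:NonEscape4d}, $\mu$ is a probability measure. I lift $\mu$ to a microlocal lift $\tilde\mu$ on the frame bundle $\Gamma\bs G$, where $G=\Spin(1,4)$. The lifts $\psi_n$ are Hecke eigenforms with growing spectral parameter, so the arguments of \cite{ShemTovSilberman:HH_AQUE_preprint, ShemTovSilberman:AQUE4d} apply. They show that $\tilde\mu$ is invariant under a one-parameter diagonal subgroup $A$ and has positive entropy, by the Bourgain--Lindenstrauss-type argument with Hecke operators at a split prime. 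The Einsiedler--Lindenstrauss measure classification \cite{EinsiedlerLindenstrauss:RankOne} then expresses $\tilde\mu$ as a combination of Haar measure and measures supported on closed orbits of intermediate subgroups. Combined with Marshall's bounds \cite{marshallupperboundsmaassforms} for weakly-small subgroups, as in \cite{ShemTovSilberman:AQUE4d}, all components other than Haar measure are ruled out, except those carried by finitely many closed orbits $\Gamma\bs\Gamma g H$ with $H$ locally isomorphic to $\SO(1,3)$, that is, totally geodesic $3$-dimensional submanifolds. So Theorem~\ref{mainthm} reduces to showing that for each such orbit and each $\delta>0$, the mass $\mu_n$ of a $\delta$-tubular neighbourhood of the corresponding $3$-fold is $\ll \delta^{c}$ uniformly in $n$. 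Any $A$-invariant ergodic component on such an orbit would contribute positive mass to every neighbourhood, so this bound forces $\mu$ to be the uniform measure.

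For the non-concentration estimate I will use the amplification method. I take a test function $k$ on $G$, concentrated within $\delta$ of $H$, and a Hecke operator $\tau=\sum_\ell a_\ell \tau_\ell$ built from Hecke operators at primes $p$ in a long range $p\sim L$, where $L$ is a power of the spectral parameter or a large constant depending on $\delta$. The pre-trace inequality gives
\[
|\lambda_\tau(\psi_n)|^2\,\mu_n(N_\delta)\ \ll\ \sum_{\gamma\in\Gamma}|(\tau*\tau^*)(\gamma)|\;|k(g^{-1}\gamma g)|.
\]
Here $\lambda_\tau(\psi_n)$ is the eigenvalue of $\tau$ on $\psi_n$. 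The identity-coset term is bounded by $\|\tau\|_2^2$. The remaining terms come from Hecke points lying near $gHg^{-1}$, and I count them by intersecting the Hecke double cosets with $H_p$. To exploit non-temperedness, I use the explicit Satake parameters of Pitale lifts from \cref{sec:Pitale_lift}: at unramified $p$ they are $(p^{1/2},p^{-1/2})$ times the parameter of the underlying weight $1/2$ form, a CAP/Saito--Kurokawa-type Arthur parameter. Consequently some Hecke operator $T_{p}$ (the one whose eigenvalue includes $p^{1/2}+p^{-1/2}$) has eigenvalue of size $\asymp p^{\|\eta\|^*_G}\cdot p^{1/2}$, uniformly in $n$.

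The main obstacle is that even with these enlarged eigenvalues, the individual Hecke operators only satisfy the weak inequality \eqref{eq:weak-smallness-intro2}. The intersection count $p^{2\|\eta\|_H^*}$ for the $\SO(1,3)$ orbit is then too large compared with $|\lambda|^2$. My plan is to take linear combinations $\tau_p = T_{p}+c_1T_{p^2}+c_0$ with coefficients $c_i$ depending on $p$, and possibly also on the $p$-adic parameter of $\psi_n$ through the weight $1/2$ form. The combination is chosen so that the kernel of $\tau_p*\tau_p^*$ cancels on the double cosets meeting $H_p$ most heavily. I would compute $\tau_p*\tau_p^*$ in the spherical Hecke algebra of $\GSpin(1,4)(\QQ_p)$, a form of $\GSp_4$, or split-rank-one inner forms depending on $p$. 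I would then choose coefficients killing the large $H_p$-contributions, using the fact that $H_p\cap K_p\eta(p)K_p$ counts are computable via the $\SO(3,1)$ building embedded in the $\SO(4,1)$ building. At the same time the combination must keep $\lambda_{\tau_p}(\psi_n)\gg p^{\|\eta\|^*_G+1/2}$, which relies on the Pitale eigenvalue identity being exact rather than merely bounded.

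Summing over $p\sim L$ with $\Gamma$-point counting (linear forms in $\gamma$ close to $gHg^{-1}$, controlled by the integral structure of $\SV_2(\Z)$ and a Diophantine count of size $\ll L^{\epsilon}(1+\delta L^{A})$) would give $\mu_n(N_\delta)\ll L^{-\eta}$ for a suitable choice of $L=\delta^{-c}$. This yields the required uniform non-concentration and hence Theorem~\ref{mainthm}.
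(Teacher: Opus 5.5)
Your reduction to non-concentration near totally geodesic $3$-folds is essentially the paper's (\cref{sec:reduction}). The one slip is harmless: the relevant sets are the countably many $\Gamma yHM$ with $y\in\SV_2(\bar{\QQ}\cap\RR)$, not finitely many closed orbits.

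\textbf{First gap: the amplifier.} This is the step the paper is actually about. You never construct the amplifier, and the family you propose cannot supply one. In the paper's notation your $T_p,T_{p^2}$ are $T_1(p)=\tau_{0,1}(p)$ and $T_2(p)=\tau_{1,2}(p)$. In $(\sum_p c_p\tau_p)^2$ the $\asymp N^2$ cross terms $p\neq q$ cost up to $\norm{\tau_p}_{L^1(gHg^{-1})}\norm{\tau_q}_{L^1(gHg^{-1})}$. So each single-prime operator must satisfy $\norm{\tau_p}_{L^1(gHg^{-1})}=o(\abs{\lambda_{\tau_p}})$; cancellation inside $\tau_p*\tau_p^*$ is not enough.

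Now take lifts with $\lambda_n(p)=O(p^{-1/2})$ for most $p\in[P,2P]$; the paper notes that infinitely many lifts have small $T_2(p)$-eigenvalues. By \eqref{eq:Hecke_eigenvalue}, $\abs{\Lambda_{1,n}(p)}\asymp p^2$ and $\abs{\Lambda_{2,n}(p)}\ll p^2$. Write $\tau_p=aT_1(p)+bT_2(p)+c$.
\begin{itemize}
\item Distinct double cosets cannot cancel in an $L^1$-seminorm, and already for $g=1$ the operator $T_1(p)$ meets $H$ in $\gg p^2$ cosets. Hence $\norm{\tau_p}_{L^1(H)}\gg\abs{a}p^2+\abs{c}$, and the cross terms destroy the gain unless $b\Lambda_{2,n}(p)$ dominates $\lambda_{\tau_p}$.
\item In that case the diagonal fails. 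The $\tau_{0,2}(p)$-coefficient of $\tau_p\tau_p^*$ is $\abs{a}^2+(p+1)\abs{b}^2$, so positivity forbids the cancellation you hope for. The diagonal term then has $L^1(H)$-size $\gg\abs{b}^2p^5$, against $\abs{\lambda_{\tau_p}}^2\ll\abs{b}^2p^4$.
\item Averaging over $N\asymp P/\log P$ primes leaves a ratio $\gg\log P$, so nothing in the span of $1,T_1(p),T_2(p)$ works in this regime.
\end{itemize}

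The paper's missing ideas are three.
\begin{enumerate}
\item[(i)] Use only primes $p\equiv 3\pmod 4$ good for $g$ (\cref{manyPrimes}). Contrary to your remark about inner forms, $\uG$ is split at every odd $p$. What matters is that $H_p$ then has split torus $\diag(1,a)$, so every $\tau_{m,l}(p)$ with $m>0$, in particular $T_2(p)$, misses $gHg^{-1}$ entirely (\cref{noIntersectionLemma}).
\item[(ii)] Split into cases according to whether $\abs{\lambda_n(p)}\le\frac1{10}$ for many $p$.
\item[(iii)] In the bad case use the \emph{quadratic} operator $\sigma(p)=T_2(p)^2-(p+1)T_1(p)^2$. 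Its $\tau_{0,2}$-component cancels (\cref{lem:DecompositionsUsingComputer}), so $\norm{\sigma(p)}_{L^1(gHg^{-1})}\ll p^3$ while $\Lambda_{\sigma,n}(p)\le -p^5/3$. Also $\norm{\sigma(p)^2}_{L^1(gHg^{-1})}\ll p^{10}$ controls the diagonal (\cref{lem:GlobalAmplifier}).
\end{enumerate}

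\textbf{Second gap: transverse Hecke translates.} Your displayed inequality only sees Hecke points near $gHg^{-1}$. But $\gamma N_\delta$ also meets $N_\delta$ whenever $\gamma$ carries the $3$-fold to one crossing it transversally. Such $\gamma$ are not near $gHg^{-1}$, and they do not disappear as $\delta\to0$. Their contribution is the $\mu_n$-mass of neighbourhoods of the lower-dimensional intersections. No lattice-point count controls that mass uniformly in $n$, and your Diophantine bound is only asserted and does not address it.

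The paper handles this with \cref{basic} and an induction on $\dim U$ over subsets $U\subset yHM$. The induction rests on the stabilizer classification \cref{stabilizerscor}. Virtually small stabilizers are treated by Marshall-type bounds via \cite[Corollary 10]{ShemTovSilberman:AQUE4d}. Stabilizers virtually contained in a conjugate of $H$ are treated by \cref{lem:GlobalAmplifier}.
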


As in previous works on the subject, a microlocal lift construction reduces the problem to equidistribution of arithmetic quantum limits on $\Gamma\bs G$, where $G = \SV_2(\R)$ is a certain double cover of $\Iso^+(\H^4)$ (see \cref{sec:SV_2} for the definition). We will fix a certain diagonalizable subgroup $A$ of $G$, and call an $A$-invariant finite measure on $\Gamma\bs G$ an arithmetic quantum limit if it is the weak-$*$ limit of measures of the form $\abs{\phi_j}^2 dg$, where the $\phi_j$ are Hecke eigenfunctions on $\Gamma\bs G$ of unit $L^2$-norm and increasing Laplace eigenvalues.       

Two important inputs for \cref{mainthm} are the results of \cite{ShemTovSilberman:AQUE4d}, concerning the structure of arithmetic quantum limits on $\Gamma\bs G$, and our recent proof of non-escape of mass \cite{deFaveriShemTov:NonEscape4d}. These inputs reduce the problem to proving a non-concentration theorem, which is the new component of this paper and we now describe.  

Recall that $G = \SV_2(\RR)$ is a certain double cover of $\Iso^+(\H^4)$, and $\Gamma=\SV_2(\ZZ) \subset G$. Let $A$ be the group of real diagonal matrices in $G$. Let $M$ denote the compact part of the centralizer of $A$ in $G$, and $H$ denote the obvious embedding of $\SL_2(\CC)$ into $G$ when the latter is realized by a group of $2\times2$ matrices over the quaternions (see \cref{sec:SV_2} for details and concrete descriptions of each group). 

For each Pitale lift $\psi_n$ on $\Gamma \backslash \H^4$, let $\tilde{\psi}_n$ denote its ($L^2$-normalized) microlocal lift to $\Gamma\backslash G$ constructed by Silberman and Venkatesh in \cite{SilbermanVenkatesh:SQUE_Lift}.

\begin{theorem}[Non-concentration]\label{maintech}
    Every weak-$*$ limit $\tilde{\mu}$ of the measures $d\tilde{\mu}_n = |\tilde{\psi}_n|^2 \, dx$ on $\Gamma\backslash G$ satisfies 
    \begin{equation}\label{largeSets}
        \tilde{\mu}(\Gamma yHM)=0 
    \end{equation}
    for every $y\in\SV_2(\bar{\QQ}\cap\RR)$.  
\end{theorem}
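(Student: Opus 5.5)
We follow the amplification strategy of Rudnick--Sarnak, Marshall and Shem-Tov--Silberman, adapted to the non-tempered subgroup $H \simeq \SL_2(\CC)$. Suppose $\tilde\mu(\Gamma yHM)>0$. The set $\Gamma y H M$ is a countable union of pieces of $H$-orbits, so the mass would be carried by small tubes $\Gamma y H_{\le R} M B_\epsilon$ around a compact piece of the orbit. Thus it suffices to prove
\[
\int_{y H_{\le R} M B_\epsilon} |\tilde\psi_n|^2 \, dx \longrightarrow 0 \quad (\epsilon\to 0),
\]
uniformly in $n$ large. Since $y\in \SV_2(\bar\QQ\cap\RR)$, the stabilizer of the orbit is defined over a number field. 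For all primes $p$ outside a finite set depending on $y$, the local picture is a $\QQ_p$-conjugate of $\SL_2$ over a quadratic extension (or its split form) sitting inside $\SV_2(\QQ_p)$, which is locally $\SO(1,4)$. We estimate the mass of the tube by the pre-trace inequality: for a Hecke operator $\tau$ in the unramified Hecke algebra at primes $p\le L$,
\[
|\lambda_\tau(\psi_n)|^2 \int_{\text{tube}} |\tilde\psi_n|^2 \le \sum_{\gamma\in\Gamma}\ \sum_{g} (\tau*\tau^*)(g)\,\int_{\text{tube}} k_\epsilon(x^{-1}\gamma g x)\,dx .
\]
Here $k_\epsilon$ is a kernel concentrating near the identity, with the microlocal-lift normalization as in \cite{SilbermanVenkatesh:SQUE_Lift}. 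The geometric side is controlled by counting Hecke points $\gamma g$ lying within $\epsilon$ of $y H M y^{-1}$. Away from a bounded neighbourhood, such points lie in (a rational form of) $H$ itself. This follows from the diophantine argument of \cite{ShemTovSilberman:HH_AQUE_preprint}: an element of small height near a $\bar\QQ$-group lies on it. So the geometric side is $\ll \epsilon^{c}\,\|\tau\|_1$-type diagonal contributions plus $\sum_{g\in H_p\cap \supp(\tau*\tau^*)}|\tau*\tau^*(g)|$.

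Next we feed in the arithmetic of Pitale lifts. Their Satake parameters at $p$ are those of a weight-$1/2$ form together with the non-tempered Arthur parameter coming from the Saito--Kurokawa-type packet. Concretely, for the Hecke operator $T_{p}$ attached to the minuscule cocharacter, the eigenvalue is $\approx p^{1/2}(1+p^{-1}) + \lambda_f(p^2)$-type, of size $\asymp p^{\|\eta\|_G^*+1/2}$ up to the tempered part, i.e.\ larger than generic by $p^{1/2}$. The key step, and the main obstacle, is constructing the amplifier $\tau=\sum_{p\le L} c_p \tau_p$. It is built from combinations of $\1_{K_p\eta(p)K_p}$ for a few cocharacters $\eta$ (e.g.\ $T_p$, $T_{p^2}$ and $T_p^2$), chosen so that two things hold. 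First, $\lambda_\tau(\psi_n)\gg$ its generic size times the non-tempered boost for every Pitale lift; this uses a Hecke relation forcing $|\lambda_{T_p}|$ or $|\lambda_{T_{p^2}}|$ to be large. Second, the combination $\tau_p*\tau_p^*$ has cancelling mass on $H_p$. I would search for a linear combination $\sigma_p=\alpha\1_{K\eta_1K}+\beta\1_{K\eta_2K}+\gamma \1_K$ that annihilates the Pitale spherical vector's ``bad'' component while its restriction to $H_p$ (via Satake/Macdonald computations on the Bruhat--Tits tree of $\SL_2$ over $\QQ_{p^2}$ or $\QQ_p\times\QQ_p$ inside the building of $\SO(1,4)$) has sup on $H_p$ smaller by a factor $p^{-1/2}$ or more than the trivial bound $\|\eta\|_H^*\le\frac23\|\eta\|_G^*$ gives. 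I expect to verify this by explicitly computing $K_p\eta(p)K_p\cap H_p$ in terms of $H_p$-double cosets and checking that the weighted counts cancel at the leading order.

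Finally we sum over primes $p\sim L$ and optimize. Off-diagonal terms $p\ne q$ involve $\tau_p*\tau_q^*$ supported at level $pq$, with $H$-intersection bounded by the product of the local estimates. The diagonal gives $\sum_p |c_p|^2(\text{local }H\text{-count})$, while the spectral side gives $|\sum_p c_p\lambda_{\tau_p}|^2\gg L^{2-\delta}\cdot(\text{boost})^2$. With the improved local count, the ratio is $\ll L^{-\delta'}+\epsilon^c L^{A}$. Choosing $L$ a small power of $\epsilon^{-1}$ makes the tube mass $\to0$ as $\epsilon\to0$, uniformly in $n$. This gives \eqref{largeSets}. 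The translation from $\Gamma\bs G$ masses to the pre-trace inequality, and the small-height-implies-in-$H$ lemma, should be routine adaptations of \cite{ShemTovSilberman:HH_AQUE_preprint, marshallupperboundsmaassforms}. The real content is the local amplifier computation with cancellation along $H_p$.
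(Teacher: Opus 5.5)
\textbf{The gap: returns off the stabilizer.} The main problem is how you treat Hecke returns that do not lie on the stabilizer. For the mass of a tube around a compact piece $U\subset yHM$, the relevant returns are the $s$ with $sU_\epsilon\cap U_\epsilon\neq\emptyset$, i.e.\ $s$ near $UU^{-1}\approx yH_{\le R}MH_{\le R}y^{-1}$. The set $HMH$ has nonempty interior in $G$: for $m\in M$ not normalizing $H$ one has $\mathfrak{h}+\mathfrak{m}+\Ad(m)\mathfrak{h}=\mathfrak{g}$.

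So no height argument can push these returns onto a rational form of $H$. Liouville only shows that returns very close to the \emph{stabilizer} lie on it, and a point on the variety $yHMy^{-1}$ need not lie in the group $yHy^{-1}$. The remaining returns are transverse: $sU\cap U$ is nonempty of smaller dimension, and each contributes the $\tilde\mu_n$-mass of an $\epsilon$-tube around that set.

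Nothing in your plan bounds this by $\epsilon^c$ uniformly in $n$; that would be a non-concentration statement of the same type as the theorem. The kernel formulation does not rescue it. A kernel at scale $\epsilon$ either fails to reproduce $\tilde\psi_n$ uniformly as the spectral parameter grows, or it depends on $n$ and the volume bound is lost.

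The paper instead uses the kernel-free positivity inequality \cref{basic}. There, transverse returns appear as $\tilde\mu_n(N_\delta)$ for finitely many sets $N=U\cap bU$ of lower dimension. The claim is then proved by induction on $\dim U$ over all bounded subsets of $yHM$, with the amplifier's support independent of $n$ so that $\cN$ is fixed. The induction produces pieces $U$ whose stabilizers are not conjugates of $H$. This is why \cref{classificationSec} and \cref{stabilizerscor} show every stabilizer is either virtually small (handled by \cite[Corollary 10]{ShemTovSilberman:AQUE4d}) or virtually contained in $gHg^{-1}$ with $g\in\SV_2(\bar\QQ\cap\RR)$ (handled by the amplifier). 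Your plan has neither the induction nor this classification. The rate $L^{-\delta'}+\epsilon^cL^A$ you aim for is unsupported; the paper proves only the qualitative statement.

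\textbf{The amplifier.} This part is left as a search, and the criterion you state is not the operative one. What enters is the $L^1$-mass $\sum\abs{c_s}$ over support points lying in $gHg^{-1}$. Since distinct double cosets are disjoint, savings can only come from the coefficients in the decomposition into basic operators $\tau_{m,l}(p)$, not from cancellation of weighted counts on $H_p$. Two facts that make the construction work are missing.
\begin{enumerate}
\item Restrict to primes $p\equiv 3\pmod 4$ that are good for $g$ (a positive proportion, by Chebotarev). There $\uH$ has $\QQ_p$-rank one, with split torus $\diag(1,1,a,a)$ in $\PGSp_4(\QQ_p)$. A Weyl-orbit comparison then shows $K_p\diag(1,p^m,p^l,p^{l-m})K_p$ misses $H_pK_p$ whenever $m>0$ (\cref{noIntersectionLemma}). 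Only the $\tau_{0,l}(p)$ components can meet $H$, with mass $\ll p^{2l}$. This fails at $p\equiv 1\pmod 4$, where $\uH$ has split rank two.
\item Uniformity over all lifts comes from a dichotomy on $\abs{\lambda_n(p)}$ (\cref{lem:GlobalAmplifier}).
\begin{itemize}
\item If $\abs{\lambda_n(p)}>1/10$ for half the good primes, use $T_2(p)$. Its eigenvalue is $\gg p^{5/2}$ and it has zero mass on $gHg^{-1}$, so all off-diagonal terms vanish; $T_2(p)^2$ has mass $\ll p^5$.
\item Otherwise use $\sigma(p)=T_2(p)^2-(p+1)T_1(p)^2$. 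Its eigenvalue is $\le -p^5/3$, because $\Lambda_{1,n}(p)\gg p^2$ dominates. Its decomposition (\cref{lem:DecompositionsUsingComputer}) has no $\tau_{0,l}$ with $l>0$, so its mass is $\ll p^3$, and $\ll p^{10}$ for $\sigma(p)^2$.
\end{itemize}
\end{enumerate}
The boosted eigenvalue of $T_1(p)$ that you single out does not suffice by itself. Its available mass bound on $H$ is $\ll p^2$, the same size as $\Lambda_{1,n}(p)$, so the off-diagonal terms $p\neq q$ would absorb all the amplification.
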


It follows from \cite{ShemTovSilberman:AQUE4d} that concentration of measure on subsets of the form given in \eqref{largeSets} is the only remaining obstacle towards QUE in $\Gamma\backslash G$. The result above overcomes it for the Pitale lifts.
In \cref{sec:reduction} we promptly deduce \cref{mainthm} assuming \cref{maintech}. The rest of the paper is then devoted to the proof of \cref{maintech}. 

%%%%%%%%%%%%%%%%%%%%%%%%%%%%%%%%%%%%%%%%%%%%%%%%%%%%%%%

\subsection{Sketch of the proof}

\subsubsection{The parallel/transverse argument} \label{section:parallel-transverse}

To prove \cref{maintech} we apply the method of \cite{ShemTovSilberman:AQUE4d}, which reduces non-concentration on a proper submanifold $L \subset \Gamma\bs G$ to constructing Hecke operators with small $L^1$-norm (relative to their eigenvalues) when restricted to $L$. 

Let $U \subset L$ be a nice compact subset, and $U_\delta$ be its $\delta$-neighborhood. Call a Hecke translate $s.U$ \emph{parallel to $U$} if $\dim(s.U\cap U)=\dim(U)$, and otherwise call $s.U$ \emph{transverse}. If $\tau_j$ is a positive definite Hecke operator with $\tilde{\psi}_j$-eigenvalue $\lambda_j$, \cref{basic} gives an upper bound for $\lambda_j \cdot \tilde{\mu}_j(U_\delta)$ in terms of two quantities: the number of parallel Hecke translates of $U$, and the $\tilde{\mu}_j$-measure of tubes around the sets $s.U\cap U$ for $s.U$ transverse to $U$. This second quantity tends to $0$ as $\delta \to 0$ by induction on the dimension of $U$, where uniformity in $j$ can be arranged from control of the support of the $\tau_j$.

The task is then to find operators $\tau_j$ for which the number of parallel Hecke translates is small compared to $\lambda_j$. 
If $L$ has a \emph{small} stabilizer (see \cref{smallness-def}), then there always exist $\tau_j$ with the desired properties. In our case some of the stabilizers are conjugates of $H=\SL_2(\CC)$, which is \emph{not} small in $\SV_2(\RR)$. The key property of Pitale lifts that we use is that they are untempered, having especially large Hecke eigenvalues. However, the eigenvalues are in general not large enough for previous constructions of Hecke operators, exploiting sizes of stabilizers, to work.  

\subsubsection{Hecke operators avoiding the large subgroup $\SL_2(\CC)$} 

A crucial component of our proof of \cref{maintech} is the use of carefully constructed Hecke operators; they are given in \cref{sec:twoHeckeOperators} in terms of the two familiar Hecke operators $T_1(p)$ and $T_2(p)$. 

Our first observation is that for any orbit of $H$, the support of $T_2(p)$ does not intersect the orbit at all, for a positive proportion of primes $p$. This gives uniform bounds for the $L^2$-mass of $\tilde{\psi}_j$ near the orbit, as long as its $T_2(p)$-eigenvalue is large enough\footnote{This happens for a density one subsequence of the $\tilde{\psi}_j$, so using only $T_2(p)$ is enough to prove a version of quantum ergodicity (QE) for the (sparse) subsequence of Pitale lifts. However, there are still infinitely many lifts for which the $T_2(p)$-eigenvalues are too small.}. 

To handle the case where the $T_2(p)$-eigenvalue is small we use the operator $\sigma(p)=T_2(p)^2-(p+1)T_1(p)^2$. If $\tilde{\psi}_j$ has small $T_2(p)$-eigenvalue, we show that $\sigma(p)$ has the required amplification properties: spectrally its eigenvalue is forced to be large, and geometrically it was chosen so that certain coefficients in its decomposition as a linear combination of basic Hecke operators are small. In fact, we also need these amplification properties to hold for the positive definite operators $T_2(p)^2$ and $\sigma(p)^2$; we prove this by decomposing these operators explicitly as linear combinations of basic Hecke operators.   

To compute the decomposition we use a computer program. While these computations could in principle be presented here without referring to it, we believe it is more instructive to include our code for two reasons.
First, we compute the linear combination using the Satake transform, so most intermediate steps are quite algebraic and not particularly enlightening or conceptual (we do believe there is a conceptual explanation for the existence of appropriate Hecke operators, but do not focus on this point here). 
Second, the code may be of independent interest as it can be generalized and used to compute such decompositions in other reductive groups.

%%%%%%%%%%%%%%%%%%%%%%%%%%%%%%%%%%%%%%%%%%%%%%%%%%%%%%%

\subsection*{Acknowledgements}

We are grateful to Elad Zelingher for his valuable assistance with SageMath, and to Simon Marshall for his precise remarks correcting an inaccuracy in a previous version. We thank Elon Lindenstrauss for his support, constant encouragement, and fruitful discussions. We also thank Valentin Blomer, F{\'e}licien Comtat, Peter Humphries, Subhajit Jana, Philippe Michel, Zeev Rudnick, Peter Sarnak, and Lior Silberman for encouragement and helpful comments on an earlier draft of this paper.

%%%%%%%%%%%%%%%%%%%%%%%%%%%%%%%%%%%%%%%%%%%%%%%%%%%%%%%

\section{Reduction to non-concentration}\label{sec:reduction}

We now explain how \cref{mainthm} follows from \cref{maintech}. First we use the latter in combination with the results of \cite{ShemTovSilberman:AQUE4d} to obtain that arithmetic quantum limits on $\Gamma\bs G$ are proportional to the Haar measure. Then our non-escape of mass result \cite{deFaveriShemTov:NonEscape4d} upgrades proportionality to equality.

\begin{lemma}\label{lemma:H_conjugate_contained}
    Let $g\in G$ and suppose that $gHg^{-1}$ is defined over $\bar{\QQ}\cap\RR$ and contains $A$. Then $gHg^{-1}M\subset bHM$ for some $b\in\SV_2(\bar{\QQ}\cap\RR)$.
\end{lemma}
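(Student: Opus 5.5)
The plan is to move $gHg^{-1}$ onto $H$ by an element of $\SV_2(\bar{\QQ}\cap\RR)$ that normalizes $A$, and then to absorb the remaining discrepancy into $M$. Set $H' = gHg^{-1}$ and $F = \bar{\QQ}\cap\RR$; note that $F$ is a real closed field. Both $H$ and $H'$ are $F$-subgroups of $\SV_2$ that contain $A$. The group $A$ is one-dimensional split, since $G$ has real rank one. Its centralizer is $Z_G(A) = AM$, and $M$ is compact, isomorphic to $\Spin(3)$.

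First I will analyze which conjugates of $H$ contain $A$. In $\SO(1,4)$ language, $H$ is $\Spin(1,3)$, the stabilizer of a spacelike vector $v$. The condition $A\subset gHg^{-1}$ says that the spacelike vector $w = g.v$ is fixed by $A$. The fixed space of $A$ in $\RR^{1,4}$ is the $3$-dimensional negative definite space $W$ orthogonal to the $A$-eigenlines, and $M$ acts on $W$ through the full group $\SO(3)$. After rescaling we may therefore write $w = m.v$ for some $m\in M$, assuming as we may that $v \in W$, because $A \subset H$. This gives $gHg^{-1} = mHm^{-1}$.

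The second step handles rationality. Since $H'$ is defined over $F$, so is its normalizer, and the vector $w$ can be chosen in $W(F)$. Because $F$ is real closed, we can normalize $w$ to have the same norm as $v$ within $F$. The group $M$ is defined over $\QQ$, and its $F$-points act transitively on spheres in $W(F)$, because $\SO(3)$ over a real closed field acts transitively on vectors of a given norm. We can therefore pick $b\in M(F)\subset \SV_2(F)$ with $b.v = w$, lifting through the spin double cover. The lift exists over $F$ because $F$ is real closed and the relevant spinor norm is trivial, being positive. Then $bHb^{-1} = H'$.

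Finally, $H' M = bHb^{-1}M = bH M$, since $b^{-1}\in M$ gives $b^{-1}M = M$. Hence $gHg^{-1}M \subset bHM$, as claimed.

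The step I expect to be the main obstacle is the identification of $H$ with the stabilizer of a spacelike vector in the quaternionic model $\SV_2$ of \cref{sec:SV_2}, including checking that $M$ acts transitively on the unit sphere of $W$ with compatible rationality. The fallback is more direct: the conjugates of $H$ containing $A$ form the set $N_G(A)/N_{N_G(A)}(H)$-orbit. The identity component of $N_G(A)$ is $AM$. Because $H'$ and $H$ are both $F$-points-defined, the transporter $\{x\in AM : xHx^{-1}=H'\}$ is an $F$-variety with a real point. By Tarski's transfer it therefore has an $F$-point $b = am$. Since $a\in A\subset H$, we may replace $b$ by $m$, and the final step above applies unchanged.
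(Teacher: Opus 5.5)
Your proof is correct, but it runs in the opposite order from the paper's.

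\textbf{The paper's route.} It gets rationality first, from Lemma 25 of \cite{ShemTovSilberman:AQUE4d}: this gives some $b\in\SV_2(\bar{\QQ}\cap\RR)$ with $bHb^{-1}=gHg^{-1}$. Only afterwards does it control $b$. Since $A$ and $b^{-1}Ab$ are maximal split tori of $H$, they are $H$-conjugate, so $b^{-1}\in H\,N_G(A)\subset HM$. This yields $bHb^{-1}M\subset bHM$.

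\textbf{Your route.} You first find a \emph{real} conjugator inside $M$. You do this either through the $A$-fixed space $W$ of the vector representation, or, in your fallback, through the same torus-conjugacy idea the paper uses, now inside $N_G(A)$. You then descend this conjugator to a $(\bar{\QQ}\cap\RR)$-point by hand, in one of two ways. One is the $F$-rational fixed line of $H'$, Witt's theorem, and the triviality of spinor norms for the definite space $W$ over a real closed field. The other is Tarski transfer applied to the $F$-defined transporter in $AM$.

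\textbf{What each buys.} Your approach is self-contained: it essentially reproves the cited rationality lemma in this special case. It also gives a slightly sharper conclusion, namely $b\in M(\bar{\QQ}\cap\RR)$ and the equality $gHg^{-1}M=bHM$. The cost is that your main route must set up a $\QQ$-rational identification of $\SV_2$ with $\Spin$ of a form of signature $(1,4)$, with $H$ the stabilizer of a rational spacelike vector and $M=\Spin(W)$. The paper never needs this model.

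\textbf{Two minor points.}
\begin{enumerate}
\item No rescaling is needed in your first step, since $w=g.v$ already has the same norm as $v$.
\item The cleanest reason $w$ can be taken $F$-rational is that the $H'$-fixed line in a $\QQ$-rational representation is stable under $\Gal(\bar{\QQ}/F)$. Equivalently, it is the fixed space of the Zariski-dense subgroup $H'(F)$. The $F$-rationality of the normalizer is not the relevant fact.
\end{enumerate}
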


\begin{proof}
    Since $H$ and $gHg^{-1}$ are defined over $\bar{\QQ}\cap\RR$, it follows (see e.g.\ {\cite[Lemma 25]{ShemTovSilberman:AQUE4d}}) that there exists $b\in\SV_2(\bar{\QQ}\cap\RR)$ such that $bHb^{-1}=gHg^{-1}$. Thus 
    \begin{equation}\label{initial}
         gHg^{-1}M = bHb^{-1}M.
    \end{equation}
    
    Observe that $H$ contains both $A$ and $b^{-1}Ab$. These are two maximal split tori in $H$, hence conjugated by an element $h\in H$. That is, 
    $b^{-1}Ab=hAh^{-1}$. Thus $h^{-1}b^{-1}\in N_G(A)\subset HM$, so $b^{-1}\in HM$ and we get 
    \begin{equation*}
        bHb^{-1}M\subset bHM.  
    \end{equation*}
    The result follows from \eqref{initial}.
    
\end{proof}

\begin{proof}[Proof of \cref{mainthm}]
    We will show that every weak-$*$ limit point $\mu$ of the measures $d\mu_j = |\psi_j|^2\, dz$ is equal to the uniform probability measure $dz$ on $\Gamma\bs \HH^4$. 

    Indeed, it follows from \cite{SilbermanVenkatesh:SQUE_Lift} that the microlocal lifts $\tilde{\mu}_j$ on $\Gamma\backslash G$ have a weak-$*$ limit point $\tilde{\mu}$ which is $A$-invariant and projects to $\mu$. Applying \cite[Theorem 2]{ShemTovSilberman:AQUE4d}, it follows that $\tilde{\mu}$ is proportional to a convex combination of homogeneous measures. Each non-uniform component in this convex combination is the $\tilde{H}$-invariant measure on a subset $\Gamma ym\tilde{H}$, where $y\in\SV_2(\bar{\QQ}\cap\RR)$, $m\in M$, and $\tilde{H}$ is an algebraic subgroup of $G$ isomorphic to $\SL_2(\CC)$ such that $m\tilde{H}m^{-1}$ is defined over $\bar{\QQ}\cap\RR$.
    
    By \cref{lemma:H_conjugate_contained} and by the fact that all the copies of $\SL_2(\CC)$ in $\SV_2(\RR)$ are conjugated (see e.g.\ \cref{classification1}), each of the components in this convex combination is either the Haar measure $dg$ or has support contained in a set as in \eqref{largeSets}, of which there are only countably many. But $\tilde{\mu}$ gives zero measure to any of these sets by \cref{maintech}, hence $d\tilde{\mu}$ is proportional to $dg$. Therefore the projection $d\mu$ is proportional to $dz$. From \cite[Theorem 1]{deFaveriShemTov:NonEscape4d} it follows (in wide generality) that $\mu$ is a probability measure, so $d\mu = dz$.
    
\end{proof}

%%%%%%%%%%%%%%%%%%%%%%%%%%%%%%%%%%%%%%%%%%%%%%%%%%%%%%%

\section{Preliminaries}\label{sec:prelimineries}

\subsection{Hamilton quaternions}

Let $\bH$ denote the associative algebra of real Hamilton quaternions, with basis elements $1, \bi, \bj, \bk$ satisfying $\bi^2 = \bj^2 = \bk^2 = \bi\bj\bk = -1$. For $z \in \bH$, let $z^*$ denote the image of $z$ under the anti-isomorphism $z \mapsto z^*$ given by switching the sign of $\bk$. Also denote the usual quaternionic conjugation $z \mapsto \overline{z}$ given by switching the signs of $\bi, \bj, \bk$. The composition of these two is the isomorphism $z\mapsto z'$ given explicitly by switching the signs of $\bi,\bj$.  Let $N(z)$ denote the norm $N(z) = \abs{z}^2 := z \overline{z}$ and $\mathrm{Re}(z)$ denote the real part $\mathrm{Re}(z) := \frac{z + \overline{z}}{2}$.

\subsection{Isometries of \texorpdfstring{$\H^4$}{}}\label{sec:SV_2}
Let $C_3$ be the Clifford algebra generated by $1,\bi_1,\bi_2,\bi_3$ with the relations $\bi_r^2=-1$ and $\bi_r\bi_s=-\bi_s\bi_r$ for $r\ne s$. 
Let $\HH^{4}$ denote the upper half-space model for hyperbolic $4$-space,  
\begin{equation*}
    \HH^{4}=\{z=x_0+x_1 \bi_1+ x_2 \bi_2 + y \bi_3 \in C_3 \mid  y>0\}
\end{equation*}
with the hyperbolic metric $ds^2 = y^{-2}(dx_0^2+dx_1^2+dx_2^2+dy^2)$. Let $G = \SV_2(\RR)$ denote the group 
\begin{equation}\label{eq:SV_2_def}
    \SV_2(\R) = \left\{ \begin{pmatrix} a & b \\ c & d \end{pmatrix} \in M_2(\bH) \Bigm\vert \begin{pmatrix} a & b \\ c & d \end{pmatrix} \begin{pmatrix} 0 & 1 \\ -1 & 0 \end{pmatrix} \begin{pmatrix} a^* & c^* \\ b^* & d^* \end{pmatrix} = \begin{pmatrix} 0 & 1 \\ -1 & 0 \end{pmatrix} \right\}.
\end{equation}
It acts isometrically on $\HH^{4}$ by 
\begin{equation}\label{action1}
g\cdot z := (az+b)(cz+d)^{-1} \qquad \text{ for } z\in \H^4, 
\end{equation}
where we naturally identify the quaternions $\bH$ as a subalgebra of $C_3$ (by $\bi \mapsto \bi_1$ and $\bj \mapsto \bi_2$, so $\bk \mapsto \bi_1 \bi_2$). The action \eqref{action1} induces a short exact sequence
\begin{equation*}
    1 \to \{\pm 1\} \to SV_2(\R) \to \Iso^+(\HH^{4}) \to 1,
\end{equation*}
where ${\mathrm{Iso}^+(\HH^{4})}$ is the group of orientation-preserving isometries of $\H^4$. The hyperboloid model for $\HH^{4}$ gives also ${\mathrm{Iso}^+(\HH^{4})} \isom SO^+(1, 4)$ (the identity component of $\SO(1, 4)$) and $SV_2(\R) \isom \Spin(1, 4)$.
For more background on this see \cite{deFaveriShemTov:NonEscape4d} and references therein. 

\subsection{Subgroups of \texorpdfstring{$\SV_2(\RR)$}{}}

Let $A=\{\diag(a, a^{-1}) \mid a \in \R^\times \}$ denote the subgroup of real diagonal matrices in $G = \SV_2(\RR)$, and $M=\{\diag(\alpha, \alpha') \in M_2(\bH) \mid \alpha \bar{\alpha} = 1\}$ denote the compact part of its centralizer in $G$. Finally, let $H$ denote the obvious copy of $\SL_2(\CC)$ in $G = \SV_2(\RR)$. Viewing $\SV_2$ as an algebraic group scheme over $\ZZ$ via \eqref{eq:SV_2_def}, observe that $H$ corresponds to the real points of the algebraic subgroup given by constraining the coefficients of $\bj$ and $\bk$ to vanish.   

\subsection{The lattice \texorpdfstring{$\SV_2(\ZZ)$}{}}\label{sec:lattice}

It is clear from \eqref{eq:SV_2_def} how to view $\SV_2(\RR)$ as the group of $\RR$-points of an affine algebraic group $\SV_2$ over $\Z$. We consider the group of $\ZZ$-points $\SV_2(\ZZ)\subset \SV_2(\RR)$. Concretely, $\SV_2(\ZZ)$ is the subgroup of $\SV_2(\R)$ consisting of matrices whose entries belong to the Lipschitz integral quaternions 
\begin{equation*}
\bH(\Z) = \Z+\Z \bi+\Z \bj+\Z \bk.
\end{equation*}
Then $\SV_2(\ZZ)$ is a non-uniform lattice in $\SV_2(\R)$, and we will work over the quotient space\footnote{The image of $\Gamma = \SV_2(\Z)$ in $\Isom^+(\HH^4)$ is the group generated by the integral isometries described in the introduction, by \cite[Proposition 3.1]{Pit05}.} $\SV_2(\Z)\bs\HH^4$, with the probability measure $dz$ coming from the Riemannian measure $d\vol(z)= \frac{dx_0 \,dx_1 \,dx_2 \, dy}{y^4}$ on $\HH^4$.

\subsection {Maass forms on \texorpdfstring{$\H^4$}{}}\label{sec:cusp-forms}

With the choice of coordinates above, the Laplace--Beltrami operator on $\H^4$ is given by  
\begin{equation*}
    \Delta=y^2\left(\partial_{x_0}^2 + \partial_{x_1}^2 + \partial_{x_2}^2 + \partial_{y}^2\right) - 2 y\partial_y.
\end{equation*}

In our context, a Maass form $\phi$ on $\H^4$ is an $\SV_2(\Z)$-invariant eigenfunction of $\Delta$ satisfying the moderate growth condition $\abs{\phi(z)}\ll y^M$ as $y \to \infty$, for some constant $M$. Writing $\lambda=(\frac{3}{2})^2+r^2$ for the $\Delta$-eigenvalue, so that $\Delta \phi + \lambda \phi = 0$, it follows that $\phi$ has a Fourier expansion (see e.g.~\cite{Maa49})
\begin{equation*}
    \phi(z) = \phi_0(y) + y^{3/2} \sum_{0 \not= \beta \in V^3(\ZZ)} A(\beta) K_{\bi r}(2 \pi |\beta| y) e\left(\mathrm{Re}(\beta z)\right),
\end{equation*}
where $V^3(\Z) := \Z + \bi \Z + \bj \Z$, $K_{\bi r}$ denotes the $K$-Bessel function, and
\begin{equation*}
    \phi_0(y) = 
    \begin{cases}
        A_1 y^{3/2+\bi r} + A_2  y^{3/2-\bi r} & \text{ if } r \not= 0,\\
        A_1  y^{3/2} + A_2  y^{3/2}\log{y} & \text{ if } r = 0.
    \end{cases}
\end{equation*}
The form $\phi$ is called cuspidal, or a cusp form, if $A_1=A_2=0$, so that $\phi_0$ is identically zero.

\subsection {Hecke operators on \texorpdfstring{$\SV_2(\Z) \backslash \H^4$}{}}\label{subsection:Hecke_operators}

To define Hecke operators, consider the group of similitudes 
\begin{equation*}
    {\rm{GSV}}_2(\RR)^+=\left\{g=\begin{pmatrix}a&b\\ c&d \end{pmatrix}\in M_2(\bH) \Bigm\vert ad^*-bc^*=\mu(g) \in \R^+, ab^*, cd^*\in V^3\right\}.  
\end{equation*}
Like $\SV_2(\R)$, it acts on $\HH^4$ by M{\"o}bius transformations as in $\eqref{action1}$. Suppose that $g\in {\rm{GSV}}_2(\RR)^+$ belongs to the commensurator ${\rm{Comm}}_{{\rm{GSV}}_2(\RR)^+}(\SV_2(\ZZ))$ of $\SV_2(\Z)$ in ${\rm{GSV}}_2(\RR)^+$, i.e.\ that $\SV_2(\Z)\bs (\SV_2(\Z)g\SV_2(\Z))$ is finite. 
The corresponding Hecke operator $T$ on $L^2(\SV_2(\Z)\bs \H^4)$ is defined by 
\begin{equation}\label{eq:BasicHeckeOperatorDef}
    Tf(x)=\sum_{s\in S}f(sx),     
\end{equation}
where $S\subset \SV_2(\Z) g \SV_2(\Z)$ is a fixed set of representatives for $\SV_2(\Z)\bs \SV_2(\Z) g\SV_2(\Z)$. We will occasionally use the notation $T\sim g$.

To each prime number $p\ne2$ we shall associate two Hecke operators $T_1(p)$ and $T_2(p)$. To define them we follow the presentation of Pitale from \cite{Pit05} and choose, for each odd prime $p$, an element $\hat{\alpha}\in \bH(\ZZ)$ such that $N(\hat{\alpha}) = p$ and $p \nmid \hat{\alpha}^n$ for all $n\ge1$ (this is equivalent to $\mathrm{Re}(\hat{\alpha}) \neq 0$). 
Let $T_1(p)$ and $T_2(p)$ denote the operators corresponding to the diagonal matrices\footnote{Note that $T_1(p)$ is what Pitale calls $T_p$, and $T_2(p)$ is what Pitale calls $T_{p^2}$.} 
\begin{equation}\label{eq:T_p_matrices}
    T_1(p)\sim \begin{pmatrix}1&0\\0&p\end{pmatrix}\qquad \text{ and } \qquad T_2(p)\sim \begin{pmatrix}\hat{\alpha}&0\\0&\hat{\alpha}'p \end{pmatrix}.
\end{equation}

Let $\cH_p$ denote the $\CC$-algebra of operators generated by $T_1(p)$, $T_2(p)$, and the identity operator $I$. We have $T_1(p)\cdot T_2(p)=T_2(p) \cdot T_1(p)$, so that $\cH_p$ is commutative. Also, any element of $\cH_p$ commutes with any element of $\cH_q$ if $p$ and $q$ are distinct primes, so the $\C$-algebra generated by all the $\cH_p$ (for $p \neq 2$) is commutative. We denote this algebra (the \emph{Hecke algebra}) by $\cH$, and call its elements \emph{Hecke operators}. The Hecke operators, being defined by isometries, commute with $\Delta$. 
A cusp form $\phi$ on $\SV_2(\Z)\bs \H^4$ is called a {\it Hecke--Maass cusp form} if it is a joint eigenfunction of $\cH$ (thus a joint eigenfunction of $\Delta$ and $\cH$).

Fix an odd prime $p$. We call each Hecke operator in $\cH_p$ which corresponds to a single double coset as in \eqref{eq:BasicHeckeOperatorDef} a \emph{basic Hecke operator} (at $p$). The basic Hecke operators at $p$ can be parametrized as follows. Let $\tau_{m,l}(p)$ denote the operator corresponding to the diagonal matrix
\begin{equation}\label{eq:generalBasicHeckeOperators}
    \tau_{m,l}(p)\sim g_{m, l}(p) := \begin{pmatrix}{\hat{\alpha}}^m&0\\0&({\hat{\alpha}'})^mp^{l-m}\end{pmatrix}.
\end{equation}
In particular $T_1(p)=\tau_{0,1}(p)$ and $T_2(p)=\tau_{1,2}(p)$. 
By \cite[Lemma 5.1]{Pit05}, every basic Hecke operator at $p$ has the form $\tau_{m,l}(p)$ for $l, m \in \Z$ such that $l\ge 2m \geq 0$.

In the sequel we will consider certain Hecke operators which will be expressed explicitly as polynomials in $T_1(p)$ and $T_2(p)$. This will enable us to have useful expressions for their eigenvalues (in terms of Hecke-eigenvalues of half-integral weight forms, see \eqref{eq:Hecke_eigenvalue}). However, to analyze the \emph{geometric} properties of these operators, we will decompose them as linear combinations of the basic Hecke operators $\tau_{m,l}(p)$.  

\subsection{Pitale lifts}\label{sec:Pitale_lift}

In this paper we study the sequence of Hecke--Maass cusp forms $\psi_n$ on $\Gamma\backslash\H^4$ constructed by Pitale in \cite{Pit05}. Each $\psi_n$ is obtained as a lift from a Hecke--Maass cusp form $f_n$ on $\H^2$ of weight $1/2$ (in the Kohnen plus space) for $\Gamma_0(4)$. The $\psi_n$ are distinct, $L^2$-normalized, and ordered by Laplacian eigenvalue. We give a detailed definition of the $\psi_n$ and explain their basic properties in \cref{section:Pitale_appendix}. 

The only information we shall use about the lifts $\psi_n$ are their Hecke eigenvalues. If $\lambda_n(p)$ denotes the Hecke eigenvalue of $f_n$ for the operator $T_{p^2}$ defined in \eqref{eq:half_integral_Hecke_op_def}, then 
\begin{equation*}
    T_j(p) \psi_n = \Lambda_{j, n}(p) \cdot \psi_n
\end{equation*}
for $j\in \{1, 2\}$, where
\begin{equation}\label{eq:Hecke_eigenvalue}
    \Lambda_{1,n}(p)= p\big(p^{1/2} \lambda_n(p) + p + 1\big)\qquad \text{and} \qquad \Lambda_{2,n}(p)=(p+1)\big(p^{3/2} \lambda_n(p) + p - 1\big). 
\end{equation}

%%%%%%%%%%%%%%%%%%%%%%%%%%%%%%%%%%%%%%%%%%%%%%%%%%%%%%%

\section{Hecke operators}

In this section we describe the Hecke operators introduced in \cref{subsection:Hecke_operators} from a few different (but equivalent) perspectives, and use those descriptions to prove some of their basic properties.

\subsection{Hecke operators on \texorpdfstring{$\SV_2(\ZZ)\bs\SV_2(\RR)$}{}}

The Hecke operators $\cH$ can in fact be defined already on functions on the bundle $\SV_2(\ZZ)\bs \SV_2(\RR)$ of $\SV_2(\ZZ)\bs \HH^4$.
Indeed, in \eqref{eq:BasicHeckeOperatorDef} we defined a basic Hecke operator using single coset representatives for $\SV_2(\ZZ)\bs\SV_2(\ZZ)a\SV_2(\ZZ)$, where $a$ belongs to the commensurator of $\SV_2(\ZZ)$ in $\GSV_2(\RR)^+$. Since the center of $\GSV_2^+(\RR)$ acts trivially on $\HH^4$, we could equivalently take $a$ in the commensurator of $\SV_2(\ZZ)$ in $\SV_2(\RR)$. Then it is clear how to view the corresponding Hecke operator as an operator on $L^2(\SV_2(\ZZ)\bs \SV_2(\RR))$. On the other hand, consider the bijection
\begin{equation}\label{eq:ObviousBijection}
\SV_2(\ZZ)\bs\SV_2(\RR)\isom \PGSV_2(\ZZ)\bs \PGSV_2(\RR).
\end{equation}
Here 
\begin{equation*}
    \GSV_2(R):=\left\{ \begin{pmatrix}a&b\\c&d\end{pmatrix}\in M_2(\bH(R))\Bigm\vert ad^*-bc^*\in R^\times,\ ab^*, cd^*\in R+\bi R+\bj R\right\},
\end{equation*}
and $\PGSV_2(R)$ is the quotient of $\GSV_2(R)$ by its (group-theoretic) center, for every commutative ring with unit $R$. We set $\uG:=\PGSV_2$. Through \eqref{eq:ObviousBijection} we can identify the Hecke operators (and their eigenfunctions) on $L^2(\SV_2(\ZZ)\bs\SV_2(\RR))$ with those on $L^2(\uG(\ZZ)\bs\uG(\RR))$. The advantage in doing so is that the commensurator of $\uG(\ZZ)$ in $\uG(\RR)$ is $\uG(\QQ)$, so the basic Hecke operators can be represented by rational matrices (see example at the end of \cref{sec:HeckeOperatorsOnHomogeneousSpace}). We again use the notation $\tau_{m,l}(p)$ for the basic Hecke operator corresponding to the diagonal matrix given in \eqref{eq:generalBasicHeckeOperators}, acting on functions on $\SV_2(\ZZ)\bs\SV_2(\RR)$ (or equivalently those on $\uG(\ZZ)\bs\uG(\RR)$).

\subsection{The \texorpdfstring{$L^1$}{}-seminorm of a Hecke operator restricted to a subset}\label{sec:HeckeOperatorsOnHomogeneousSpace}

Let $\pi:\uG(\RR)\to \uG(\ZZ)\bs \uG(\RR)$ denote the quotient map.   
Each Hecke operator $\tau$ on $L^2(\uG(\ZZ)\bs\uG(\RR))$ has the form
\begin{equation}\label{generalHecke}
(\tau f)(x)=\sum_{s\in S}c_sf(sx),    
\end{equation}
where $S$ is a (finite) set of representatives for the image under $\pi$ of a finite union of double $\uG(\ZZ)$-cosets and $s\mapsto c_s$ is constant on each double coset. We call $\pi(S)\subset \uG(\ZZ)\bs \uG(\RR)$ the \emph{support} of $\tau$.

We also define, for any subset $U\subset \uG(\RR)$ and $1\leq p\leq \infty$, the seminorm
\begin{equation}\label{eq:HeckeNorm}
  \norm{\tau}_{L^p(U)}=\Big(\sum_{\substack{s\in S \\ \pi(s)\in\pi(U)}} \abs{c_s}^p \Big)^{1/p}.   
\end{equation}
For the case $U=\uG(\R)$ we shall simply write $\norm{\tau}_p$. We will only use this definition for $p=1, 2$, and $\infty$ (with the usual meaning for the latter). 

In light of \eqref{eq:ObviousBijection} we can also make the same definition for a subset $U\subset \SV_2(\RR)$, where now we consider a set of coset representatives $S \subset \SV_2(\RR)$. We will also use $\pi$ for the quotient map $\pi: \SV_2(\RR) \to \SV_2(\ZZ)\bs \SV_2(\RR)$, since the meaning is clear from context. For instance, the support of $T_1(p)$ when viewed as an operator on $L^2(\SV_2(\ZZ) \bs \SV_2(\RR)) = L^2(\Gamma \bs G)$ contains the coset $\Gamma \diag(\sqrt{p}^{-1},\sqrt{p})$; when viewed as an operator on $L^2(\uG(\ZZ)\bs\uG(\RR))$, it contains the coset $\uG(\ZZ)\diag(1,p)$. 

\subsection{Hecke operators from an \texorpdfstring{$S$}{}-arithmetic point of view}\label{sec:padicHeckeOperators}

For each finite set $S$ of odd primes, we have an isomorphism 
\begin{equation}\label{eq:AdelicRepresentation}
\uG(\ZZ)\bs\uG(\RR)\isom\Gamma_S\bs\uG(\RR)\times \prod_{p\in S}\uG(\QQ_p)/K_S    
\end{equation}
given by $\uG(\ZZ)g\mapsto \Gamma_S(g,1)K_S$, where $\Gamma_S=\uG(\ZZ[S^{-1}])$ acts diagonally from the left and $K_S=\prod_{p\in S}\uG(\ZZ_p)$ acts on $\prod_{p\in S}\uG(\QQ_p)$ from the right. We denote $K_p := \uG(\ZZ_p)$. 

Using \eqref{eq:AdelicRepresentation}, we may describe the local Hecke algebra $\cH_p$ as follows. Let $\cH(\uG(\QQ_p),K_p)$ be the convolution algebra of compactly supported $K_p$-bi-invariant functions on $\uG(\QQ_p)$, where convolution is defined with respect to the Haar measure $dg$ on $\uG(\QQ_p)$ normalized so that $K_p$ has measure $1$. Then every $\tau \in \cH(\uG(\QQ_p),K_p)$ defines via \eqref{eq:AdelicRepresentation} a linear operator on $L^2(\uG(\ZZ)\bs\uG(\RR))$ by
\begin{equation*}
    f \mapsto (f * \tau)(x) =\int_{\uG(\QQ_p)}\tau(g)f(xg^{-1})\, dg.
\end{equation*}
Since $\cH(\uG(\QQ_p),K_p)$ is commutative (this is a non-trivial fact), the set of all such linear maps is itself a commutative algebra of operators, which corresponds to $\cH_p$. For instance, each $\tau_{m, l}(p) \sim g_{m, l}(p) \in \GSV_2(\QQ)$ corresponds to $\tau = \1_{K_p g_{m, l}(p) K_p}$. The $\tau_{m, l}(p)$ are self-adjoint, as $\tau^* = \1_{K_p g_{m, l}(p)^{-1}K_p} = \tau$ (cf.\ \cite[Proposition 6.5.2]{RS07}).  From now on we will use the equivalent descriptions of $\cH_p$ without further comment. 

The following definition gives a convenient way to control the $L^1$-seminorm of a Hecke operator restricted to a subset, defined in \eqref{eq:HeckeNorm}.

\begin{definition}\label{def:L1-normAdelicIntersection}
    Let $\uV$ be an algebraic subgroup of $\uG$ over $\bar{\QQ}\cap\RR$ (that is, $\uV$ is a subgroup of the group $\uG_{\bar{\QQ}\cap\RR}$ obtained by extension of scalars). Suppose that $\uV$ is defined over $\QQ_p$ for an odd prime $p$. Given integers $l\geq 2m\geq 0$, we define  
    \begin{equation}\label{eq:DefinitionOfL}
    \cL_{m, l}(p,\uV):=|(\uV(\QQ_p)K_p \cap K_p g_{m, l}(p)K_p)/K_p|,
    \end{equation}
    where recall that $g_{m, l}(p) = \diag(\hat{\alpha}^m, (\hat{\alpha}')^m p^{l-m})$, so the Hecke operator $\tau_{m, l}(p)$ corresponds to the matrix $g_{m, l}(p)$ as in \eqref{eq:generalBasicHeckeOperators}. 
\end{definition}

\begin{remark} \ 
    \begin{enumerate}
        \item The statement that $\uV$ is defined over $\QQ_p$ means that there is a set of defining equations for $\uV$ (containing the defining equations for $\uG$) such that the number field generated by the coefficients of these equations can be embedded in $\QQ_p$. Fixing such an embedding allows us to make sense of $\uV(\QQ_p)$ and view it as a subgroup of $\uG(\Q_p)$.  
        The choice of the embedding is immaterial for our applications; we keep it implicit to avoid cumbersome notation.  
       
        \item The basic properties of $\cL_{m, l}(p,\uV)$ deduced in the sequel would also hold if we replaced the matrix $g_{m, l}(p)$ with any element of $\uG(\QQ_p)$. We prefer to work in this restricted setting for concreteness.  
    \end{enumerate}
\end{remark}

Observe that 
\begin{equation}\label{eq:L1NormsNonsense}
    \norm{\tau_{m, l}(p)}_{L^1(\uV(\RR))}\le \cL_{m, l}(p,\uV).
\end{equation}

\begin{lemma}[Bound is multiplicative for different primes]\label{lem:DifferentPrimesAreMultiplicative}
     Suppose $p \neq q$ are odd primes. Let $\uV$ be a subgroup of $\uG$ over $\bar{\QQ}\cap\RR$ which is defined over both $\QQ_p$ and $\QQ_q$. Then
     \begin{equation}\label{eq:multNorms}
        \norm{\tau_{m, l}(p)\tau_{m', l'}(q)}_{L^1(\uV(\RR))} \leq \cL_{m, l}(p,\uV) \cdot \cL_{m', l'}(q,\uV).
     \end{equation}
\end{lemma}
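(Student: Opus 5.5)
We work in the $S$-arithmetic model \eqref{eq:AdelicRepresentation} with $S=\{p,q\}$. There $\uG(\ZZ)\bs\uG(\RR)\isom\Gamma_S\bs\big(\uG(\RR)\times\uG(\QQ_p)/K_p\times\uG(\QQ_q)/K_q\big)$. The operator $\tau_{m,l}(p)\tau_{m',l'}(q)$ corresponds to convolution by $\1_{K_pg_{m,l}(p)K_p}\otimes\1_{K_qg_{m',l'}(q)K_q}$. This is a single bi-$K_S$-invariant indicator function, because the two local factors act on different coordinates and commute. So the product is itself a "basic" operator with all coefficients $c_s=1$. Its coset representatives are indexed by pairs $(x_p,x_q)$ with $x_p\in K_pg_{m,l}(p)K_p/K_p$ and $x_q\in K_qg_{m',l'}(q)K_q/K_q$. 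Concretely, by strong approximation for $\Gamma_S$ (or directly, as in the proof of \eqref{eq:L1NormsNonsense}), each such pair corresponds to a rational representative $s\in\uG(\ZZ[S^{-1}])$ whose $p$-component lies in $x_p$ and whose $q$-component lies in $x_q$.

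The proof then repeats the argument behind \eqref{eq:L1NormsNonsense} coordinatewise. The left side of \eqref{eq:multNorms} counts the representatives $s$ with $\pi(s)\in\pi(\uV(\RR))$. Fix such an $s$. Then $\gamma s=v$ for some $\gamma\in\uG(\ZZ)$ and $v\in\uV(\RR)$. We replace $s$ by $\gamma s$; this does not change its coset modulo $K_S$, since $\gamma\in K_p\cap K_q$ and $\gamma$ also preserves the double cosets. So we may assume $s=v\in\uV(\RR)\cap\uG(\ZZ[S^{-1}])$.

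We now pass to the local points. Since $s$ is rational with coefficients in $\bar{\QQ}\cap\RR$ and satisfies the defining equations of $\uV$, its image under the chosen embedding into $\uG(\QQ_p)$ lies in $\uV(\QQ_p)$. This requires that the embedding of the field of definition of $\uV$ be compatible. That holds because $s$ has rational entries, so the equations evaluated at $s$ are identities in that field, and any embedding preserves them. The same holds at $q$. Hence $sK_p\in(\uV(\QQ_p)K_p\cap K_pg_{m,l}(p)K_p)/K_p$, and likewise $sK_q$ lies in the corresponding set at $q$.

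Finally, the map $s\mapsto(sK_p,sK_q)$ is injective on representatives, since distinct representatives give distinct cosets of $K_S$. Its image lies in the product of the two sets counted by $\cL_{m,l}(p,\uV)$ and $\cL_{m',l'}(q,\uV)$. This gives \eqref{eq:multNorms}.

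The main obstacle is the rationality step: checking that the elements realizing the intersection with $\pi(\uV(\RR))$ can be taken to lie in $\uV$ over a field that embeds compatibly into both $\QQ_p$ and $\QQ_q$ at once. The statement ``$\uV$ is defined over $\QQ_p$'' means only that the coefficients admit some embedding into $\QQ_p$. We therefore need that $s\in\uV$ is an algebraic statement independent of the embedding. This holds because $s\in\uG(\QQ)$ and each defining polynomial evaluated at $s$ vanishes in the coefficient field itself.
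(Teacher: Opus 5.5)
Your strategy is the paper's: pass to the $S$-arithmetic model with $S=\{p,q\}$, attach to each relevant coset of the product operator a pair of local cosets in $(\uV(\QQ_p)K_p\cap K_pg_{m,l}(p)K_p)/K_p$ and its analogue at $q$, and get injectivity from $\Gamma_S\cap(\uG(\RR)\times K_p\times K_q)=\uG(\ZZ)$. Your rationality step is correct. It is what the paper compresses into ``since $\uV$ is defined over $\QQ_p$ and $\QQ_q$, we have a well-defined projection map''. The gap is the left/right coset bookkeeping, which breaks your injectivity step as written.

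The operator is $f\mapsto\sum_s f(sx)$, with $s$ running over representatives of the left cosets $\uG(\ZZ)s$. The term $f(sx)$ corresponds to the point $\Gamma_S(x,s^{-1},s^{-1})K_S$, so the local coset naturally attached to $\uG(\ZZ)s$ is $s^{-1}K_S$, not $sK_S$.

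Your claim that replacing $s$ by $\gamma s$ ``does not change its coset modulo $K_S$, since $\gamma\in K_p\cap K_q$'' is false for $sK_S$: left multiplication by $\gamma$ sends $sK_p$ to $\gamma sK_p$. Likewise, $v_1K_S=v_2K_S$ only gives $v_2\in v_1\uG(\ZZ)$, not $\uG(\ZZ)v_1=\uG(\ZZ)v_2$, so your map need not be injective.

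Here is a concrete failure. Take $\uV=\uH$ and the operator $T_1(p)=\tau_{0,1}(p)\tau_{0,0}(q)$. Let $v_1=\diag(1,p)$ and $v_2=v_1\left(\begin{smallmatrix}1&1\\0&1\end{smallmatrix}\right)$. Both lie in $\uH(\RR)$ and in the support. They lie in distinct left cosets, since $v_2v_1^{-1}=\left(\begin{smallmatrix}1&p^{-1}\\0&1\end{smallmatrix}\right)\notin\uG(\ZZ)$. Yet $v_1K_S=v_2K_S$.

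The fix, as in the paper, is to pass to inverses. The map $\uG(\ZZ)s\mapsto s^{-1}K_S$ is well defined, and it is injective: if $s_1^{-1}K_S=s_2^{-1}K_S$, then $s_1s_2^{-1}\in\Gamma_S\cap(\uG(\RR)\times K_p\times K_q)=\uG(\ZZ)$. Moreover, if $\gamma s=v\in\uV(\RR)$, then $s^{-1}K_p=v^{-1}K_p\in\uV(\QQ_p)K_p/K_p$ by your rationality argument.

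However, $s^{-1}$ now lies in $K_pg_{m,l}(p)^{-1}K_p$. You therefore need the self-adjointness $K_pg_{m,l}(p)^{-1}K_p=K_pg_{m,l}(p)K_p$, and likewise at $q$, to land in the set counted by $\cL_{m,l}(p,\uV)$. This ingredient is absent from your proposal. Working with $K_Ss$ instead of $sK_S$ would also make your normalization step valid, but converting the resulting count back to $\cL_{m,l}(p,\uV)$ again requires inversion and self-adjointness.
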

 
\begin{proof}
    We follow the argument in the proof of \cite[Lemma 3.8]{marshallupperboundsmaassforms}. Set $S = \{p, q\}$ and denote the left cosets corresponding to the operator $\tau_{m, l}(p)\tau_{m', l'}(q)$ by $\uG(\ZZ) \gamma_i$ with $\gamma_i \in \Gamma_S$. Each $\gamma_i^{-1}$ such that $\gamma_i \in \uG(\Z) \uV(\R)$ corresponds (embedding diagonally) to a distinct coset in $\mathcal{D} / \uG(\ZZ)$, where 
    \begin{equation*}
        \mathcal{D} = \Gamma_S \cap \big[ \uV(\R)\uG(\ZZ) \times K_p g_{m, l}(p)K_p \times K_q g_{m', l'}(q) K_q \big] \subset \uG(\R) \times \uG(\Q_p)\times \uG(\Q_q).
    \end{equation*}
    Here we used the self-adjointness $K_p g_{m, l}(p)^{-1}K_p = K_p g_{m, l}(p)K_p$ (and similarly for $q$).
    Since $\uV$ is defined over $\QQ_p$ and $\QQ_q$, we have a well-defined projection map
    \begin{equation*}
        \varphi:\mathcal{D} \to \big[ (\uV(\QQ_p)K_p \cap K_p g_{m, l}(p)K_p)/K_p \big] \times \big[ (\uV(\QQ_q)K_q \cap K_q g_{m', l'}(q)K_q)/K_q \big].
    \end{equation*}
    
    To finish the proof it is enough to show that $\varphi$ has fibers which are contained in the same right $\uG(\ZZ)$-coset. This is the case, since if $a_1, a_2 \in \mathcal{D}$ are mapped to the same pair of cosets $(g_pK_p, g_q K_q)$ then
    \begin{equation*}
        a_1^{-1} a_2 \in \Gamma_S\cap\big[\uG(\RR) \times K_p \times K_q\big] = \uG(\ZZ).
    \end{equation*}
    
\end{proof}

%%%%%%%%%%%%%%%%%%%%%%%%%%%%%%%%%%%%%%%%%%%%%%%%%%%%%%%

\section{Counting Hecke translates}\label{sec:HeckeTranslates}

Recall that $H\subset\SV_2(\RR)$ stands for the obvious copy of $\SL_2(\CC)$ in $\SV_2(\RR)$. 
The main result of this section is \cref{generalL1NormRestrictedToHEstimate}, which will be used to give an upper bound for $\norm{\tau}_{L^1(gHg^{-1})}$ when $\tau\in\cH_p$ for some $p$ which is good for $g$ (see \cref{goodPrimes} below). 

Let $\uH$ denote the subgroup of $\uG = \PGSV_2$ obtained by requiring that the coefficients of $\bj$ and $\bk$ in each entry vanish. Under the identification $\SV_2(\ZZ)\bs\SV_2(\RR)\isom \uG(\ZZ)\bs\uG(\RR)$ (see \eqref{eq:ObviousBijection}), the image of $H$ in $\SV_2(\ZZ)\bs\SV_2(\RR)$ is mapped to the image of $\uH(\RR)$ in $\uG(\ZZ) \bs \uG(\RR)$.

%%%%%%%%%%%%%%%%%%%%%%%%%%%%%%%%%%%%%%%%%%%%%%%%%%%%%%%

\subsection{Symplectic group and diagonal tori}

Throughout the paper we will use the fact that $\uG$ is split over $\QQ_p$ for every prime $p \neq 2$, and has a $\Q_p$-split maximal torus (of dimension $2$) consisting of (projections of) diagonal matrices. We will fix such a torus for each $p\ne2$ once and for all and call it the \emph{diagonal torus of $\uG(\Q_p)$}. To define it we use the following important isomorphism, which will also be useful for upcoming computations.

\subsubsection{Symplectic isomorphism}\label{section:symplectic_iso} We follow \cite[Proposition 6.3]{Pit05}, fixing some typos. Denote $J = \big(\begin{smallmatrix}
        0 & I_2 \\
        -I_2 & 0
    \end{smallmatrix}\big)$, and for any commutative ring with unit $R$ define the symplectic group
\begin{align*}
    \GSp_4(R) :=&\ \left\{ M \in M_4(R) \bigm\vert M J M^t = \mu(M) \cdot J \text{ for } \mu(M) \in R^\times \right\} \\
    =& \ \left\{ \begin{pmatrix}A&B\\C&D\end{pmatrix}\in M_4(R)\Bigm\vert A D^t - B C^t \in R^\times \cdot I_2, A B^t = B A^t, C D^t = D C^t  \right\}.
\end{align*}
For $p \neq 2$, fix $r, s \in \Z_p$ with $r^2+s^2 = -1$, which always exist. Then we can define a map $\psi_p : \bH(\Q_p) \to M_2(\Q_p)$ by sending $\alpha = a_0 + a_1 \bi + a_2 \bj + a_3 \bk$ to
\begin{equation*}
    \psi_p(\alpha) := 
    \begin{pmatrix}
        a_0 + a_1 r + a_2 s & a_3 - a_2 r + a_1 s \\
        -a_3 - a_2 r + a_1 s & a_0 - a_1 r - a_2 s
    \end{pmatrix}.
\end{equation*}
It can be checked directly that
\begin{itemize}
    \item $\psi_p$ is a $\Q_p$-algebra isomorphism,
    \item $\psi_p(\bH(\Z_p)) = M_2(\Z_p)$,
    \item $\det(\psi_p(\alpha)) = \abs{\alpha}^2$,
    \item $\psi_p(\alpha') = \abs{\alpha}^2 \cdot \psi_p(\alpha)^{-t}$, $\psi_p(\alpha^*) = \psi_p(\alpha)^{t}$, and $\psi_p(\overline{\alpha}) = \abs{\alpha}^2 \cdot \psi_p(\alpha)^{-1}$.
\end{itemize}
In particular, we obtain a group isomorphism $\psi_p : \GSV_2(\Q_p) \to \GSp_4(\Q_p)$ given by
\begin{equation*}
    \psi_p\left(\begin{pmatrix}
        \alpha & \beta \\ 
        \gamma & \delta
    \end{pmatrix}\right) := \begin{pmatrix}
        \psi_p(\alpha) & \psi_p(\beta) \\
        \psi_p(\gamma) & \psi_p(\delta)
    \end{pmatrix},
\end{equation*}
since the conditions $\alpha \delta^* - \beta \gamma^* \in \Q_p^\times, \alpha \beta^* = \beta \alpha^*, \gamma \delta^* = \delta \gamma^*$ defining $\GSV_2(\Q_p)$ exactly match those for $\GSp_4(\Q_p)$. Its restriction to integral points also gives $\GSV_2(\Z_p) \isom \GSp_4(\Z_p)$. Finally, taking a quotient by the respective centers (which consist of scalar matrices) implies the following.
\begin{lemma}
    For any prime $p \neq 2$, the map $\psi_p$ above gives isomorphisms $\uG(\Q_p) \isom \PGSp_4(\Q_p)$ and $\uG(\Z_p) \isom \PGSp_4(\Z_p)$.
\end{lemma}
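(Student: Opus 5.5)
The plan is to obtain the lemma from the isomorphism $\psi_p : \GSV_2(\Q_p) \isom \GSp_4(\Q_p)$ built just above, together with its integral version $\GSV_2(\Z_p) \isom \GSp_4(\Z_p)$. We then pass to quotients by the centers. The work lies in checking that these two isomorphisms exist and that the centers correspond to each other.

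\emph{Step 1: $\psi_p$ on quaternions.} Using $r^2+s^2=-1$, we verify directly that $\psi_p(\bi)^2 = \psi_p(\bj)^2 = -I$ and $\psi_p(\bi)\psi_p(\bj) = \psi_p(\bk) = -\psi_p(\bj)\psi_p(\bi)$. This makes $\psi_p$ an algebra homomorphism $\bH(\Q_p)\to M_2(\Q_p)$. It is injective because $\bH(\Q_p)$ is a central simple algebra and $\psi_p \neq 0$, so by dimension count ($4=4$) it is an isomorphism.

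\emph{Step 2: integrality.} The entries of $\psi_p(\alpha)$ are $\Z_p$-linear in $a_0,\dots,a_3$, so $\psi_p(\bH(\Z_p)) \subseteq M_2(\Z_p)$. The $4\times4$ matrix of this linear map has determinant a power of $2$ times a unit, since $r^2+s^2=-1$. As $p\ne2$, it is invertible over $\Z_p$, which gives equality $\psi_p(\bH(\Z_p)) = M_2(\Z_p)$.

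\emph{Step 3: the involutions.} We check the stated identities on the basis elements:
\begin{itemize}
\item $\det\psi_p(\alpha)=|\alpha|^2$;
\item $\psi_p(\alpha^*)=\psi_p(\alpha)^t$;
\item $\psi_p(\overline\alpha)=|\alpha|^2\psi_p(\alpha)^{-1}$, which is the adjugate.
\end{itemize}
The identity for $\alpha'$ then follows by composing the last two.

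\emph{Step 4: the group isomorphism.} Applying $\psi_p$ entrywise to block matrices turns the conditions $\alpha\delta^*-\beta\gamma^*=\mu$, $\alpha\beta^*=\beta\alpha^*$, $\gamma\delta^*=\delta\gamma^*$ into $AD^t-BC^t=\mu I_2$, $AB^t=BA^t$, $CD^t=DC^t$. These are exactly the defining conditions of $\GSp_4$. So $\psi_p$ is a bijective group homomorphism $\GSV_2(\Q_p)\to\GSp_4(\Q_p)$, and by Step 2 it maps $\GSV_2(\Z_p)$ onto $\GSp_4(\Z_p)$ (with $\mu\in\Z_p^\times$ on both sides).

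\emph{Step 5: centers.} Since the map is a group isomorphism, it carries the center of $\GSV_2(\Q_p)$ onto the center of $\GSp_4(\Q_p)$, which consists of the scalar matrices $\Q_p^\times I_4$. Explicitly, the scalars $\lambda I$ with $\lambda\in\Q_p^\times$ map to $\lambda I_4$, and $\Z_p^\times$ maps to $\Z_p^\times$. Hence $\psi_p$ descends to $\uG(\Q_p)\isom\PGSp_4(\Q_p)$.

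For $\Z_p$-points, $\uG(\Z_p)$ and $\PGSp_4(\Z_p)$ are the images of $\GSV_2(\Z_p)$ and $\GSp_4(\Z_p)$. Here we use that $\PGSp_4(\Z_p)=\GSp_4(\Z_p)/\Z_p^\times$, i.e.\ that points of the quotient scheme over the local ring $\Z_p$ lift; this holds because $H^1$ of $\mathbb{G}_m$ over a local ring vanishes. So the integral statement follows as well.

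The only mildly delicate points are the unit-determinant check in Step 2, which is where $p\ne2$ is used, and this identification of $\Z_p$-points of the projective groups. Everything else is routine verification.
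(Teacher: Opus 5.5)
Your proposal is correct and follows the same route as the paper. You check that $\psi_p$ is a $\Q_p$-algebra isomorphism compatible with $\det$ and the involutions, translate the defining relations of $\GSV_2$ into those of $\GSp_4$ over $\Q_p$ and over $\Z_p$, and pass to the quotient by the scalar centers. You supply details the paper leaves as ``checked directly,'' for instance that the coordinate change has determinant $-4(r^2+s^2)=4$, which is where $p\neq 2$ is used. One small fix: $\det\psi_p(\alpha)=|\alpha|^2$ is not linear, so it cannot be checked on basis elements; deduce it instead from the (linear) adjugate identity $\psi_p(\overline{\alpha})=\mathrm{adj}\,\psi_p(\alpha)$ together with multiplicativity. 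Also, your $H^1(\Z_p,\mathbb{G}_m)=0$ remark is harmless but unnecessary, since the paper defines these groups directly as quotients by the center.
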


By definition, for any commutative ring with unit $R$ we have
\begin{align*}
    \uH(R) := \left\{ \begin{pmatrix}a&b\\c&d\end{pmatrix} \in M_2(R[\bi])\Bigm\vert ad-bc \in R^\times \right\} \Big/ R^\times.
\end{align*}
The map $\psi_p$ gives embeddings $\uH(\Q_p)\hookrightarrow \PGSp_4(\Q_p)$ and $\uH(\Z_p)\hookrightarrow \PGSp_4(\Z_p)$. 

\subsubsection{Diagonal tori}\label{DiagTorus}

Consider the standard maximal torus 
\begin{equation*}
    T_p := \left\{ \diag(u_1, u_2, u_0 u_1^{-1}, u_0 u_2^{-1}) \bigm\vert u_0, u_1, u_2 \in \Q_p^\times \right\}
\end{equation*}
of $\GSp_4(\Q_p)$. Then we define the \emph{diagonal torus of $\uG(\Q_p)$} to be $T_{\uG}(\Q_p) := \Q_p^\times \cdot \psi^{-1}_p(T_p)$. The Weyl group of $(\GSp_4(\Q_p), T_p)$ is of order $8$, generated \cite[(6.6)]{Pit05} by 
\begin{equation}\label{eq:Weyl_group_gens}
    (u_0, u_1, u_2) \mapsto \{(u_0, u_2, u_1), (u_0, u_0 u_1^{-1}, u_2), (u_0, u_1, u_0 u_2^{-1}) \}.
\end{equation}
Similarly, we define the \emph{diagonal torus of $\uH(\Q_p)$} to be $T_{\uH}(\Q_p) := T_{\uG}(\Q_p) \cap \uH(\Q_p)$. Replacing $\Q_p$ with an arbitrary field extension in the definitions above, we obtain algebraic tori $T_{\uH}$ and $T_{\uG}$ defined over $\Q_p$.

\begin{lemma}[Diagonal torus for $\uH$]\label{lemma:H_diag_torus}
    For any prime $p \equiv 3 \pmod{4}$, we have $T_{\uH}(\Q_p) = \Q_p^\times \cdot \left\{ \diag(1, a) \bigm\vert a \in \Q_p^\times \right\}$ and $T_{\uH}$ is a maximal $\Q_p$-split torus of $\uH$.
\end{lemma}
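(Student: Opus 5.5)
The plan is to compute $T_{\uG}(\Q_p)\cap\uH(\Q_p)$ directly, by inverting $\psi_p$ on diagonal matrices, and then to verify maximality by a rank count. The only place where $p\equiv 3 \pmod 4$ enters is through the auxiliary constants $r,s$ with $r^2+s^2=-1$.

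First I describe $\psi_p^{-1}(T_p)$. An element $\diag(u_1,u_2,u_0u_1^{-1},u_0u_2^{-1})$ is $\psi_p(\diag(\alpha,\delta))$ with $\psi_p(\alpha)=\diag(u_1,u_2)$ and $\psi_p(\delta)=\diag(u_0u_1^{-1},u_0u_2^{-1})$. So I invert $\psi_p$ on a diagonal matrix $\diag(x,y)$. Writing $\alpha=a_0+a_1\bi+a_2\bj+a_3\bk$, vanishing of the off-diagonal entries of $\psi_p(\alpha)$ forces
\begin{equation*}
a_3=0 \qquad\text{and}\qquad a_1s=a_2r .
\end{equation*}
Since $(r,s)\neq(0,0)$, this gives $(a_1,a_2)=t(r,s)$ for some $t\in\Q_p$. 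The diagonal entries then give $a_0=\tfrac{x+y}{2}$ and $t(r^2+s^2)=\tfrac{x-y}{2}$, i.e.\ $t=-\tfrac{x-y}{2}$. Hence
\begin{equation*}
\psi_p^{-1}(\diag(x,y))=\tfrac{x+y}{2}-\tfrac{x-y}{2}\,(r\bi+s\bj).
\end{equation*}

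Next I impose membership in $\uH$, namely that the $\bj$- and $\bk$-coefficients vanish. This requires $(x-y)s=0$. If $s=0$ then $r^2=-1$, which is impossible in $\Z_p$ when $p\equiv 3\pmod 4$. Hence $s\neq 0$, so $x=y$ and $\alpha$ is a scalar; the same argument shows $\delta$ is a scalar. A diagonal element of $\GSV_2(\Q_p)$ lying in $\uH$ is therefore $\diag(u_1,u_0u_1^{-1})$ up to the factor $\Q_p^\times$, which modulo the center is $\diag(1,a)$ with $a\in\Q_p^\times$. For the reverse inclusion I will check that every $\diag(1,a)$ lies in $\uH(\Q_p)$ (its determinant $a$ lies in $\Q_p^\times$) and in $T_{\uG}(\Q_p)$ (it is $\psi_p^{-1}(\diag(1,1,a,a))$). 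This proves the formula for $T_{\uH}(\Q_p)$, and the same computation over field extensions shows that $T_{\uH}$ is a one-dimensional $\Q_p$-split torus.

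It remains to show maximality, i.e.\ that $\uH$ has $\Q_p$-rank $1$; this is the one step needing a structural input. Since $p\equiv 3\pmod 4$, $E=\Q_p(\bi)$ is a field, and $\uH(\Q_p)$ is the group of $g\in\GL_2(E)$ with $\det g\in\Q_p^\times$, modulo $\Q_p^\times$. Take a $\Q_p$-split torus in $\uH$ and lift it to the corresponding group before quotienting. It is diagonalizable over $\Q_p$, hence over $E$, so after conjugating in $\GL_2(E)$ it is diagonal, with eigenvalue characters defined over $\Q_p$. Thus it lies in $\{\diag(x,y)\mid x,y\in\Q_p^\times\}$, whose image modulo scalars is one-dimensional. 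Hence the $\Q_p$-rank of $\uH$ is at most $1$, and $T_{\uH}$ is maximal. Equivalently, $\uH$ is $\mathrm{Res}_{E/\Q_p}\GL_2$ cut down by a determinant condition and taken modulo $\mathbb{G}_m$, so its rank is $2-1=1$. I expect the only subtlety here to be bookkeeping about the center, and that needs care rather than new ideas.
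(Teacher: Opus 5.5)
Your proposal is correct and follows the paper's argument. The paper also computes $\psi_p^{-1}(T_p)$ explicitly as the matrices $\diag(\alpha, c\alpha')$ with $\alpha = a + b(r\bi + s\bj)$, and uses $s \neq 0$ (which holds because $p \equiv 3 \pmod 4$) to force $b=0$. For maximality the paper only says it "directly checks"; your rank argument, using that $\Q_p(\bi)$ is a field so that $\uH$ has $\Q_p$-rank $1$, is a valid way to fill in that step.
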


\begin{proof}
    From the definition we directly compute
    \begin{equation*}
        \psi^{-1}_p(T_p) = \left\{ 
        \begin{pmatrix}
            \alpha & 0 \\
            0 & c \alpha'
        \end{pmatrix} 
        \Bigm\vert c \in \Q_p^\times, \alpha  = a + b(r\bi + s\bj) \text{ for } a, b \in \Q_p \text{ with } a\neq \pm b \right\} .
    \end{equation*}
    Since $r^2+s^2 = -1$ and $p \equiv 3 \pmod{4}$, we have $s \neq 0$, hence an element of $\psi^{-1}_p(T_p)$ projects to $\uH(\Q_p)$ if and only if $b = 0$. One then directly checks that $T_{\uH}$ is a maximal $\Q_p$-split torus of $\uH$ (again since $p \equiv 3 \pmod{4}$). 
    
\end{proof}

\subsubsection{Symplectic Hecke operators} For future reference, we compute the Hecke operators on $\PGSp_4(\Q_p)$ corresponding to $\tau_{m, l}(p) \sim g_{m, l}(p) = \diag(\hat{\alpha}^m, (\hat{\alpha}')^m p^{l-m}) \in \uG(\Q_p)$ under the isomorphism $\psi_p$.

\begin{lemma}[Dictionary for Hecke operators]\label{lemma:symplectic_Hecke_op}
    For any prime $p\neq 2$, we have
    \begin{equation*}
        \psi_p(\uG(\Z_p) g_{m,l}(p) \uG(\Z_p)) = \PGSp_4(\Z_p) \diag(1, p^m, p^l, p^{l-m}) \PGSp_4(\Z_p).
    \end{equation*}
\end{lemma}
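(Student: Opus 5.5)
Since $\psi_p$ is a group isomorphism with $\psi_p(\uG(\Z_p)) = \PGSp_4(\Z_p)$ (by the preceding lemma), it carries $\uG(\Z_p)$-double cosets bijectively onto $\PGSp_4(\Z_p)$-double cosets. This gives
\begin{equation*}
\psi_p(\uG(\Z_p) g_{m,l}(p) \uG(\Z_p)) = \PGSp_4(\Z_p)\psi_p(g_{m,l}(p))\PGSp_4(\Z_p).
\end{equation*}
It therefore suffices to show that $\psi_p(g_{m,l}(p))$ lies in the $\GSp_4(\Z_p)$-double coset of $\diag(1,p^m,p^l,p^{l-m})$, up to a scalar in $\Q_p^\times$.

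First I compute the $2\times 2$ block $\psi_p(\hat\alpha)$. Its entries lie in $\Z_p$ and $\det\psi_p(\hat\alpha)=N(\hat\alpha)=p$. The condition $p\nmid\hat\alpha^n$ for all $n$ shows that $\psi_p(\hat\alpha)\not\equiv 0 \pmod p$, since otherwise $p$ would divide $\hat\alpha$. By Smith normal form for $\GL_2(\Z_p)$, we can write $\psi_p(\hat\alpha)=k_1\diag(1,p)k_2$ with $k_i\in\GL_2(\Z_p)$. Then $\psi_p(\hat\alpha^m)=\psi_p(\hat\alpha)^m$ has determinant $p^m$. Its elementary divisors are $(1,p^m)$ provided $\psi_p(\hat\alpha)^m\not\equiv 0\pmod p$, and this holds again because $p\nmid\hat\alpha^m$. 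So $\psi_p(\hat\alpha^m)=k\,\diag(1,p^m)\,k'$ with $k,k'\in\GL_2(\Z_p)$.

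Next I handle the lower-right block. Using $\psi_p(\alpha')=\abs{\alpha}^2\psi_p(\alpha)^{-t}$ and $N(\hat\alpha^m)=p^m$, we get
\begin{equation*}
\psi_p((\hat\alpha')^m p^{l-m}) = p^{l-m}\cdot p^m\,\psi_p(\hat\alpha^m)^{-t} = p^l\,(k\,\diag(1,p^{-m})\,k')^{-t}\cdot(\text{adjusted}).
\end{equation*}
Computing directly, $p^l\bigl(k\diag(1,p^m)k'\bigr)^{-t}=k^{-t}\diag(p^l,p^{l-m})k'^{-t}$. Hence
\begin{equation*}
\psi_p(g_{m,l}(p))=\diag(k,k^{-t})\,\diag(1,p^m,p^l,p^{l-m})\,\diag(k',k'^{-t}).
\end{equation*}
Elements of the form $\diag(k,k^{-t})$ with $k\in\GL_2(\Z_p)$ lie in $\GSp_4(\Z_p)$, since they satisfy $AD^t=I$ and $B=C=0$. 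This gives the claim, and passing to $\PGSp_4$ finishes the proof.

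The only delicate point is the elementary divisors of $\psi_p(\hat\alpha)^m$, namely ruling out $(p,p^{m-1})$ and similar possibilities. That is exactly what the hypothesis $p\nmid\hat\alpha^n$ is there to guarantee, once one observes that $p$ divides a quaternion $\beta\in\bH(\Z_p)$ if and only if $\psi_p(\beta)\in pM_2(\Z_p)$, which holds because $\psi_p(\bH(\Z_p))=M_2(\Z_p)$.
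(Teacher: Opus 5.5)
Your proof is correct and is essentially the paper's argument. The paper also uses Smith normal form through $\psi_p(\bH(\Z_p)^\times)=\GL_2(\Z_p)$ to bring $\hat\alpha^m$ to $\diag(1,p^m)$, and uses $\psi_p(\alpha')=\abs{\alpha}^2\psi_p(\alpha)^{-t}$ for the lower block; it conjugates by $\diag(\beta_j,\beta_j')\in\uG(\Z_p)$ where you use $\diag(k,k^{-t})\in\GSp_4(\Z_p)$ on the symplectic side, and your justification that the elementary divisors are $(1,p^m)$ spells out a step the paper leaves implicit. The only blemish is the garbled intermediate expression involving $\diag(1,p^{-m})$ and ``(adjusted)'', which should be deleted since your next line gives the correct computation.
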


\begin{proof}
    Observe that for $\hat{\alpha} \in \bH(\Z)$ with $|\hat{\alpha}|^2 = p$ and $p \nmid \hat{\alpha}^n$ for all $n\geq 1$, the Smith normal form over $\Z_p$ and the restricted isomorphism $\psi_p : \bH(\Z_p)^\times \to \GL_2(\Z_p)$ imply that for each $m \geq 0$ there exist $\beta_1, \beta_2 \in \bH(\Z_p)^\times$ with $|\beta_1 \beta_2|^2 = 1$ such that 
    \begin{equation*}
        \psi_p(\beta_1 \hat{\alpha}^m \beta_2) = \begin{pmatrix}
            1 &0\\
            0& p^m
        \end{pmatrix}.
    \end{equation*}
    Therefore $\psi_p((\beta_1 \hat{\alpha}^m \beta_2)') = |\beta_1 \hat{\alpha}^m \beta_2|^2 \cdot \psi_p(\beta_1 \hat{\alpha}^m \beta_2)^{-t} = \diag(p^m, 1)$. Thus denoting $\tilde{\beta}_j := \diag(\beta_j, \beta_j') \in \uG(\Z_p)$ for $j \in \{1, 2\}$ we have
    \begin{equation*}
        \psi_p(\tilde{\beta}_1 g_{m,l}(p) \tilde{\beta}_2) = \diag(1, p^m, p^l, p^{l-m}).
    \end{equation*}
    This finishes the proof, since $\psi_p(\uG(\Z_p)) = \PGSp_4(\Z_p)$.
    
\end{proof}

%%%%%%%%%%%%%%%%%%%%%%%%%%%%%%%%%%%%%%%%%%%%%%%%%%%%%%%

\subsection{Hecke translates in orbits of \texorpdfstring{$H$}{}} 

\begin{definition}[Good primes]\label{goodPrimes}
    For $g\in\SV_2(\bar{\QQ}\cap\RR)$, let $F\subset \RR$ be the smallest number field such that $g\in\SV_2(F)$. Call a prime $p$ \emph{good} for $g$ if $p\equiv 3 \pmod{4}$ and there is an embedding of $F$ into $\QQ_p$ under which $g$ is mapped into $\SV_2(\ZZ_p)$.
\end{definition}

Observe in particular that if $p$ is good for $g$ and $\uV$ is a rational subgroup of $\uG$, then $g\uV g^{-1}$ is defined over $\QQ_p$, and furthermore
\begin{equation}\label{eq:good_conjugate_group}
    \cL_{m, l}(p, g \uV g^{-1}) = \cL_{m, l}(p, \uV).
\end{equation}

\begin{lemma}[Positive proportion of good primes]\label{manyPrimes}
    Let $g\in\SV_2(\bar{\QQ}\cap\RR)$ and $\mathscr{P}_g$ be the set of primes that are good for $g$. There exists $\varepsilon_g>0$ such that $|\mathscr{P}_g \cap [P, 2P]| \geq \frac{\varepsilon_g P}{\log{P}}$ for every $P$ sufficiently large in terms of $g$.
\end{lemma}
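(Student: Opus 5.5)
Fix $g\in\SV_2(\bar{\QQ}\cap\RR)$ and let $F\subset\RR$ be the number field of \cref{goodPrimes}. The plan is to use the Chebotarev density theorem in the Galois closure of $F(\bi)$ over $\QQ$. The set $\mathscr{P}_g$ will contain all primes satisfying two conditions. First, $p\equiv 3\pmod 4$. Second, $p$ splits completely in $F$ and is prime to a fixed finite set of denominators. I will check that this set has positive natural density. The prime number theorem for the Chebotarev set, applied on dyadic intervals, then gives $|\mathscr{P}_g\cap[P,2P]|\geq \varepsilon_g P/\log P$ for large $P$, with $\varepsilon_g$ any constant below half the density.

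\textbf{Integrality step.} The entries of $g$ are quaternions whose four real coordinates lie in $F$. I collect this finite set of elements of $F$ together with the coordinates of $g^{-1}$, which also lie in $F$. Choose a nonzero integer $D$ such that every element of this set lies in $\mathcal{O}_F[1/D]$. Let $p\nmid 2D$ be a prime that splits completely in $F$, and let $\mathfrak{p}\mid p$ be a prime of $F$. Then $F_{\mathfrak p}=\QQ_p$, which gives an embedding $\iota:F\hookrightarrow\QQ_p$. Under $\iota$, every element of $\mathcal{O}_F[1/D]$ lands in $\ZZ_p$. Hence $\iota(g)$ and $\iota(g^{-1})$ both have entries in $\bH(\ZZ_p)$. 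Since the defining equations \eqref{eq:SV_2_def} have integer coefficients, $\iota(g)$ lies in $\SV_2(\ZZ_p)$.

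\textbf{Density step.} Let $L$ be the Galois closure of $F(\bi)$ over $\QQ$, and let $L_0\subset L$ be the Galois closure of $F$. The primes we want are those unramified in $L$ whose Frobenius conjugacy class in $\Gal(L/\QQ)$ has two properties. It is trivial on $L_0$, which is exactly splitting completely in $F$. It is nontrivial on $\QQ(\bi)$, which is exactly $p\equiv 3\pmod 4$.

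The one point that needs care is whether such Frobenius elements exist at all. This fails precisely if $\QQ(\bi)\subset L_0$. But $F\subset\RR$ is totally real... not necessarily, since $F$ need not be Galois. I would argue instead via complex conjugation. Fix a real embedding $F\subset\RR$. Complex conjugation $c\in\Gal(L/\QQ)$ fixes $F$ pointwise and acts nontrivially on $\bi$. However, $c$ need not act trivially on all of $L_0$.

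This is the main obstacle. Two routes are available.

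\emph{First route (preferred).} Work with the single embedding instead of Galois closure. Apply Chebotarev in $L$ to the conjugacy class of $c$. For a prime $p$ with $\mathrm{Frob}_{\mathfrak P}=c$ at some $\mathfrak P\mid p$ of $L$, the prime $\mathfrak p=\mathfrak P\cap F$ has residue degree $1$, because $c$ fixes $F$. Hence $F_{\mathfrak p}=\QQ_p$, which gives an embedding $F\hookrightarrow\QQ_p$. The same Frobenius acts as $c$ on $\QQ(\bi)$, so $p\equiv 3\pmod 4$. The integrality step only used the existence of a degree-one prime $\mathfrak p$ above $p$, not complete splitting. So all such $p\nmid 2D$ are good. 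These primes have density $|\mathrm{cl}(c)|/[L:\QQ]>0$, which completes the argument.
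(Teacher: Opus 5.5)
Your final argument is correct and is exactly the paper's proof: Chebotarev in the Galois closure of $F(\bi)$ applied to the conjugacy class of complex conjugation $c$ (which fixes $F\subset\RR$ and negates $\bi$, so such $p$ satisfy $p\equiv 3\pmod 4$ and have a degree-one prime of $F$ above them), then discarding the finitely many $p\mid 2D$, with your integrality step making the paper's ``$p$ sufficiently large'' explicit. You were right to abandon the complete-splitting plan from your opening paragraphs, since it can fail outright (for $F=\QQ(2^{1/4})$ the Galois closure contains $\bi$, so no $p\equiv 3\pmod 4$ splits completely in $F$), and a final write-up should delete it along with the dangling mention of a second route.
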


\begin{proof}
    Recall that we fixed an embedding $\bar{\Q} \hookrightarrow \C$. We will work over the normal closure $L$ of $F(i)$. Let $c \in \Gal(L/\Q)$ denote complex conjugation. Since $F \subset \R$, it is fixed by $c$. Thus every unramified rational prime $p$ whose Frobenius is in the conjugacy class of $c$ satisfies $p \equiv 3 \pmod{4}$ and has a prime factor over $F$ of residual degree $1$. If $p$ is also sufficiently large (in terms of $g$), this shows that it is good for $g$. Therefore the lemma follows from the Chebotarev density theorem. 
\end{proof}

\begin{proposition}\label{noIntersectionLemma}
    Let $p$ be a prime which is good for $g \in \SV_2(\bar{\QQ}\cap\RR)$. Then 
    \begin{equation*}
        \cL_{m, l}(p,g\uH g^{-1})=0
    \end{equation*}
    for any $l \geq 2m > 0$.
\end{proposition}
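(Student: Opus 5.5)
By \eqref{eq:good_conjugate_group} it suffices to show $\cL_{m,l}(p,\uH)=0$ for $p\equiv 3\pmod 4$ and $l\ge 2m>0$. In other words, we must show that no element of $\uH(\Q_p)$ lies in the double coset $K_p g_{m,l}(p) K_p$. Suppose instead that $hk\in K_p g_{m,l}(p)K_p$ with $h\in\uH(\Q_p)$ and $k\in K_p$. Then $h$ itself lies in that double coset, so the task is to find the possible $K_p$-double cosets that meet $\uH(\Q_p)$.

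To do this we use a Cartan decomposition for $\uH(\Q_p)$. Since $p\equiv 3\pmod 4$, the ring $\Z_p[\bi]$ is the ring of integers of the unramified quadratic extension $\Q_p(\bi)$, and $\uH(\Z_p)=\uH(\Q_p)\cap K_p$ is a hyperspecial maximal compact subgroup. The group $\uH(\Q_p)$ is $\mathrm{PGL}_2$-type over $\Q_p(\bi)$ restricted to elements whose determinant lies in $\Q_p^\times$. The elementary divisor theorem over the PID $\Z_p[\bi]$, whose uniformizer is $p$, gives
\[
h=k_1\,\diag(1,p^n)\,k_2 \cdot(\text{scalar}),\qquad k_1,k_2\in \GL_2(\Z_p[\bi]).
\]
We adjust units so that the determinants of $k_1,k_2$ lie in $\Z_p^\times$, which puts $k_i$ in $\uH(\Z_p)$. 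This is possible because the determinant of $h$ lies in $\Q_p^\times$, and any unit of $\Z_p[\bi]^\times$ can be absorbed by multiplying by $\diag(u,1)$ with $u\in\Z_p[\bi]^\times$. This matches \cref{lemma:H_diag_torus}: $T_{\uH}$ consists of the elements $\diag(1,a)$ and is maximal split, so the Cartan decomposition reads $\uH(\Q_p)=\uH(\Z_p)T_{\uH}(\Q_p)\uH(\Z_p)$. Hence $h\in K_p\,\diag(1,p^n)\,K_p$ for some $n\ge 0$.

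Next we identify $\diag(1,p^n)$ with a basic Hecke operator. In the notation of \eqref{eq:generalBasicHeckeOperators}, $\diag(1,p^n)=g_{0,n}(p)$, so it lies in $K_pg_{0,n}(p)K_p$. Under $\psi_p$ and \cref{lemma:symplectic_Hecke_op}, this double coset is $\diag(1,1,p^n,p^n)$. Distinct pairs $(m,l)$ with $l\ge 2m\ge0$ give distinct double cosets: the symplectic elementary divisors $(1,p^m,p^l,p^{l-m})$ determine $(m,l)$ up to the Weyl group action \eqref{eq:Weyl_group_gens} and scaling. So every double coset meeting $\uH(\Q_p)$ has $m=0$, and therefore $\cL_{m,l}(p,\uH)=0$ whenever $m>0$.

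The main obstacle is making the Cartan decomposition of $\uH(\Q_p)$ precise inside $\uG(\Q_p)$. Two points need care. First, $\uH(\Z_p)\subset K_p$, which holds because $\uH$ is defined integrally by the vanishing of the $\bj,\bk$-coordinates. Second, the center is handled correctly when passing to $\PGSV_2$. The genuinely arithmetic input is $p\equiv3\pmod 4$: $p$ stays prime in $\Z[\bi]$, so the elementary divisors over $\Z_p[\bi]$ are powers of $p$. If $p$ split, the elementary divisors could be non-associate elements $\pi$ of norm $p$, and such elements produce the $m>0$ cosets.
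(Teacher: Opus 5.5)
Your proposal is correct and follows essentially the same route as the paper. You reduce to $g=1$ via \eqref{eq:good_conjugate_group}, use the Cartan decomposition of $\uH(\Q_p)$ along $T_{\uH}$ to see that $\uH(\Q_p)$ only meets the double cosets of $\diag(1,p^n)$, which under $\psi_p$ is $\diag(1,1,p^n,p^n)$, and then conclude from uniqueness of the Cartan decomposition in $\PGSp_4(\Q_p)$. The only difference is that you check the decomposition of $\uH(\Q_p)$ by hand, via elementary divisors over $\Z_p[\bi]$ with the determinant adjustment putting $k_1,k_2$ into $\uH(\Z_p)\subset K_p$; the paper instead invokes the decomposition and separates the double cosets by counting distinct diagonal entries.
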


Note that \cref{noIntersectionLemma} implies  
\begin{equation*}
    \norm{\tau_{m,l}(p)}_{L^1(gHg^{-1})}=0
\end{equation*}
whenever $l \geq 2m > 0$, due to \eqref{eq:L1NormsNonsense}. To recall the notation, see \eqref{eq:HeckeNorm} for the $L^1$-seminorm of a Hecke operator restricted to a set, \cref{def:L1-normAdelicIntersection} for $\cL_{m, l}(p, g\uH g^{-1})$, and \eqref{eq:generalBasicHeckeOperators} for $\tau_{m,l}(p)$. Here we view the latter as an operator on $L^2(\SV_2(\ZZ)\bs\SV_2(\RR))$.

\begin{proof}[Proof of \cref{noIntersectionLemma}]
    Let $p$ be a prime which is good for $g$ and $\varphi :F \to \Q_p$ be an embedding as in \cref{goodPrimes}, so $\varphi(g) \in \SV_2(\Z_p)$. 
    We first prove the statement for $g=1$. 
    The general case will follow by essentially the same argument, as we will see below.     
    
    Recall from \eqref{eq:generalBasicHeckeOperators} that $\tau_{m,l}(p)$ corresponds to the matrix $g_{m, l}(p) \in \uG(\Q_p)$. Since $p\equiv 3\pmod{4}$, we claim that 
    \begin{equation}\label{eq:empty_intersect}
        \uH(\QQ_p)K_p \cap K_p g_{m, l}(p) K_p = \emptyset.
    \end{equation}
    Indeed, otherwise apply the isomorphism $\psi_p: \uG(\Q_p) \to \PGSp_4(\Q_p)$ and the Cartan decomposition, which by \cref{lemma:symplectic_Hecke_op} shows that $\Q_p^\times\cdot \diag(1, p^m, p^l, p^{l-m})$ would belong to the orbit of an element of the maximal $\Q_p$-split torus $\psi_p(T_{\uH}(\Q_p))$ of $\psi_p(\uH(\Q_p))$ by the Weyl group of $\PGSp_4(\Q_p)$ with respect to its maximal torus $\psi_p(T_{\uG}(\Q_p))$ described in \cref{DiagTorus}. By \cref{lemma:H_diag_torus}, the elements of $\psi_p(T_{\uH}(\Q_p))$ are of the form $\Q_p^\times \cdot  \diag(1, 1, a, a)$ for $a\in \Q_p^\times$, and by \eqref{eq:Weyl_group_gens} the Weyl group acts by permuting the diagonal entries of $\Q_p^\times \cdot  \diag(1, 1, a, a)$. Therefore it cannot be mapped to $\Q_p^\times\cdot \diag(1, p^m, p^l, p^{l-m})$ for $l \geq 2m>0$, since they have a different number of distinct entries. Thus \eqref{eq:empty_intersect} holds, and the desired result follows for $g=1$.
    
    The case of general $g \in \SV_2(\bar{\QQ}\cap\RR)$ follows from \eqref{eq:good_conjugate_group}, which is a direct consequence of $\varphi(g) \in K_p$, since $p$ is good for $g$.
    
\end{proof}

\begin{proposition}\label{IntersectionWithH-estimate}
Let $p$ be a prime which is good for $g \in \SV_2(\bar{\QQ}\cap\RR)$. Then 
\begin{equation*}
    \cL_{0, l}({p,g\uH g^{-1}})\ll p^{2l}
\end{equation*}
for any $l\geq 0$, where the implied constant is absolute. 
\end{proposition}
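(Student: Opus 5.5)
First I reduce to $g=1$. Since $p$ is good for $g$, \eqref{eq:good_conjugate_group} gives $\cL_{0,l}(p,g\uH g^{-1})=\cL_{0,l}(p,\uH)$. So it is enough to bound
\begin{equation*}
    \cL_{0,l}(p,\uH)=\abs{(\uH(\Q_p)K_p\cap K_p g_{0,l}(p)K_p)/K_p}.
\end{equation*}
Next I pass to the symplectic model via $\psi_p$. By \cref{lemma:symplectic_Hecke_op}, $K_pg_{0,l}(p)K_p$ corresponds to the double coset of $\diag(1,1,p^l,p^l)$ in $\PGSp_4(\Q_p)$. The cosets in question are then $hK_p$ with $h\in\uH(\Q_p)$ whose image lies in that double coset. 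The natural move is to count these cosets $h(\uH(\Q_p)\cap K_p)$ inside $\uH(\Q_p)/\uH(\Z_p)$, where $\uH(\Q_p)\cap K_p=\uH(\Z_p)$ holds since $\psi_p$ restricts to integral points. Distinct cosets of $\uH(\Z_p)$ give distinct cosets of $K_p$, and the map $h\uH(\Z_p)\mapsto hK_p$ is surjective onto the intersection. So the count is at most the number of $\uH(\Z_p)$-cosets in $\uH(\Q_p)\cap\psi_p^{-1}(K_p\diag(1,1,p^l,p^l)K_p)$.

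Then I use the Cartan decomposition of $\uH(\Q_p)$. Here $p\equiv 3\pmod 4$, so $\Q_p(\bi)$ is the unramified quadratic extension $E$, and $\uH(\Q_p)\isom \GL_2(E)/\Q_p^\times$. By \cref{lemma:H_diag_torus}, its Cartan decomposition is $\uH(\Q_p)=\bigsqcup_{k\ge0}\uH(\Z_p)\diag(1,p^k)\uH(\Z_p)$. Under $\psi_p$, the element $\diag(1,p^k)$ maps to $\diag(1,1,p^k,p^k)$. Since $\uH(\Z_p)\subset K_p$, the $G$-Cartan invariants separate the $\uH$-double cosets. Hence only $k=l$ contributes, and
\begin{equation*}
    \cL_{0,l}(p,\uH)\le \abs{\uH(\Z_p)\diag(1,p^l)\uH(\Z_p)/\uH(\Z_p)}.
\end{equation*}

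Finally I count this double coset. It is the standard $\GL_2(E)$ Hecke double coset of $\diag(1,\varpi^l)$, where the residue field has size $q=p^2$. Its size is $q^{l}+q^{l-1}$ for $l\ge1$, and $1$ for $l=0$. This gives $\ll p^{2l}$ with an absolute constant. The main point needing care is the identification $\uH(\Q_p)\cap K_p=\uH(\Z_p)$ and the Cartan compatibility in the third step. Both follow from the explicit description in \cref{lemma:H_diag_torus} and the integrality of $\psi_p$, so I expect no real obstacle.
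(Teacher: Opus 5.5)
Your argument is correct and is essentially the paper's. After reducing to $g=1$, the paper quotes Marshall's Lemma 5.12, which is exactly your Cartan-decomposition step: $W_{\uG}.\mu\cap X_*(T_{\uH})=\{\pm\mu\}$, and both give the double coset of $\diag(1,p^l)$. It then applies Casselman's volume formula where you count the double coset by hand as $p^{2l}+p^{2l-2}$.

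One small slip does not affect the count: $\uH(\Q_p)$ is $\{g\in\GL_2(E):\det g\in\Q_p^\times\}/\Q_p^\times$ rather than $\GL_2(E)/\Q_p^\times$. Even so, $\uH(\Q_p)/\uH(\Z_p)$ still injects into homothety classes of $O_E$-lattices in $E^2$, so your bound $q^l+q^{l-1}$ stands.
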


The proof of \cref{IntersectionWithH-estimate} will be given in \cref{zlemSec}. For now, \cref{noIntersectionLemma} and \cref{IntersectionWithH-estimate} enable us to estimate $\norm{\tau}_{L^1(gHg^{-1})}$ for any Hecke operator $\tau\in\cH_p$, as long as $p$ is good for $g$ and we know the decomposition of $\tau$ as a linear combination of the basic operators $\tau_{m,l}(p)$. Namely, we have the following result. 

\begin{proposition}[General norm bound]\label{generalL1NormRestrictedToHEstimate}
Let $p$ be a prime which is good for $g \in \SV_2(\bar{\QQ}\cap\RR)$. Let $\tau\in\cH_p$ be of the form 
\begin{equation*}
    \tau=\sum_{l\geq 2m > 0} b_{m,l} \cdot \tau_{m,l}(p) +\sum_{l\geq 0}c_l \cdot \tau_{0,l}(p).
\end{equation*}
Then
\begin{equation*}
    \norm{\tau}_{L^1(gHg^{-1})}\ll \sum_{l\geq 0} \abs{c_l}p^{2l},
\end{equation*}
where the implied constant is absolute. 
\end{proposition}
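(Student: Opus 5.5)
The plan is to combine the triangle inequality for the seminorm $\norm{\cdot}_{L^1(U)}$ with the two propositions just proved. Fix the prime $p$, which is good for $g$. Since the basic operators $\tau_{m,l}(p)$ correspond to pairwise distinct double cosets $K_p g_{m,l}(p) K_p$, the representation \eqref{generalHecke} of $\tau$ is obtained by concatenating the coset representatives of each basic operator. The coefficient $c_s$ attached to a representative $s$ lying in the double coset of $g_{m,l}(p)$ is then $b_{m,l}$ (if $m>0$) or $c_l$ (if $m=0$). It follows directly from the definition \eqref{eq:HeckeNorm} that
\begin{equation*}
    \norm{\tau}_{L^1(gHg^{-1})} = \sum_{l\geq 2m>0}\abs{b_{m,l}}\,\norm{\tau_{m,l}(p)}_{L^1(gHg^{-1})} + \sum_{l\geq 0}\abs{c_l}\,\norm{\tau_{0,l}(p)}_{L^1(gHg^{-1})};
\end{equation*}
the subadditivity inequality would in any case suffice.

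Next, bound each basic term using \eqref{eq:L1NormsNonsense}, applied with $\uV = g\uH g^{-1}$. This subgroup is defined over $\QQ_p$ because $p$ is good for $g$, and under \eqref{eq:ObviousBijection} its real points correspond to the orbit $gHg^{-1}$. We obtain $\norm{\tau_{m,l}(p)}_{L^1(gHg^{-1})}\le \cL_{m,l}(p,g\uH g^{-1})$. For $l\ge 2m>0$ the right-hand side vanishes by \cref{noIntersectionLemma}, so the entire first sum drops out. For $m=0$, \cref{IntersectionWithH-estimate} gives $\cL_{0,l}(p,g\uH g^{-1})\ll p^{2l}$ with an absolute constant. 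Substituting these bounds yields $\norm{\tau}_{L^1(gHg^{-1})}\ll\sum_{l}\abs{c_l}p^{2l}$, with the same absolute implied constant.

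The only point needing care is the identification step: one has to check that the image of $gHg^{-1}$ in $\SV_2(\ZZ)\bs\SV_2(\RR)$ matches the image of $(g\uH g^{-1})(\RR)$ in $\uG(\ZZ)\bs\uG(\RR)$, so that \eqref{eq:L1NormsNonsense} applies with this $\uV$. This is the same identification as for $H$ and $\uH$ noted at the start of \cref{sec:HeckeTranslates}, transported by conjugation by $g$. I expect no real obstacle here, since the substantive input is contained in \cref{noIntersectionLemma} and \cref{IntersectionWithH-estimate}.
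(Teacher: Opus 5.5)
Your proposal is correct and follows essentially the same approach as the paper. The paper's proof is exactly the triangle inequality combined with \eqref{eq:L1NormsNonsense}, \cref{noIntersectionLemma} (which makes every $m>0$ term vanish), and \cref{IntersectionWithH-estimate} (which bounds each $m=0$ term by $p^{2l}$).
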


\begin{proof}
    This follows directly from the triangle inequality combined with \eqref{eq:L1NormsNonsense}, \cref{noIntersectionLemma}, and \cref{IntersectionWithH-estimate}. 
    
\end{proof}

%%%%%%%%%%%%%%%%%%%%%%%%%%%%%%%%%%%%%%%%%%%%%%%%%%%%%%%

\subsection{Proof of \texorpdfstring{\cref{IntersectionWithH-estimate}}{}} \label{zlemSec}

In this section, consider $\uG$ and $\uH$ as $\QQ_p$-groups, for $p\equiv 3\pmod{4}$.
Let $T_{\uG}$ be the diagonal torus of $\uG$ defined in \cref{DiagTorus}. Let $T_{\uH}\subset T_{\uG}$ be the maximal $\QQ_p$-split torus of $\uH$, whose group of $\QQ_p$-points is given in \cref{lemma:H_diag_torus}. 

Let $G_p=\uG(\QQ_p)$, $H_p=\uH(\QQ_p)$, $K_p=\uG(\ZZ_p)$, and $K_p^{\underline{H}}=\uH(\ZZ_p)$. Denote
\begin{equation*}
    \rho_{\uG}=\frac{1}{2}\sum_{\alpha>0}\alpha,
\end{equation*}
where the sum runs over a choice of positive roots for $(\uG,T_{\uG})$. Analogously, let $\rho_{\uH}$ be one of the two roots\footnote{The root spaces have dimension $2$.} for $(\uH,T_{\uH})$, which we declare as the positive root. Denote the groups of cocharacters of $T_{\uG}$ and $T_{\uH}$ respectively by $X_*(T_{\uG})$ and $X_*(T_{\uH})$.

\subsubsection{Bounding intersections in terms of roots}

Following Marshall \cite{marshallupperboundsmaassforms}, for $\mu\in X_*(T_{\uG})$ we define 
\begin{equation*}
    \norm{\mu}^*=\max_{w\in W_{\uG}}\left<\rho_{\uG},w.\mu\right>,
\end{equation*}
where $W_{\uG}$ is the Weyl group of $(\uG,T_{\uG})$. Define $\norm{\cdot}_{\uH}^*$ for cocharacters of $T_{\uH}$ in the same way (taking maximum over the Weyl group of $(\underline{H}, T_{\underline{H}})$ and replacing $\rho_{\underline{G}}$ with $\rho_{\underline{H}}$).

\begin{remark}
Notice that we use the \emph{relative} root structure rather than the absolute root structure as was the case in \cite{marshallupperboundsmaassforms} and consequently in \cite{ShemTovSilberman:AQUE4d}. 
In \cref{small_groups_section} we will also recall the definition of smallness by using the \emph{absolute} root structure, where we view our algebraic groups as groups over an algebraically closed field.   
\end{remark}

This definition is convenient due to the following result.

\begin{lemma}[Volumes of double cosets]\label{VolLemma}
    For any cocharacter $\mu\in X_*(T_{\uG})$ we have 
    \begin{equation}\label{VolFormula}
    \abs{K_p\mu(p)K_p/K_p}\asymp p^{2\norm{\mu}^*}. 
    \end{equation}
    Analogously, for any cocharacter $\mu\in X_*(T_{\uH})$ we have
    \begin{equation*}
        \abs{K_p^{\uH} \mu(p) K_p^{\uH}/K_p^{\uH}} \asymp p^{2\norm{\mu}^*_{\uH}}.
    \end{equation*}
\end{lemma}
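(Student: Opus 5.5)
This is the standard Macdonald-type volume formula. I would derive it from the Bruhat--Tits/Iwahori description of double cosets rather than compute directly. Fix $\mu \in X_*(T_{\uG})$. Since $K_p\mu(p)K_p$ depends only on the $W_{\uG}$-orbit of $\mu$, we may replace $\mu$ by its dominant representative $\mu^+$. Then $\norm{\mu}^* = \left<\rho_{\uG},\mu^+\right>$: for dominant $\mu^+$ the pairing $\left<\rho_{\uG}, w\mu^+\right>$ is maximized at $w=1$, because $\mu^+ - w\mu^+$ is a nonnegative combination of positive coroots and $\rho_{\uG}$ pairs positively with each. So it suffices to show
\[
\abs{K_p\mu^+(p)K_p/K_p} \asymp p^{\left<2\rho_{\uG},\mu^+\right>}.
\]

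The key step is Macdonald's formula. For a split group with hyperspecial $K_p$ (here $\uG(\Q_p)\isom \PGSp_4(\Q_p)$ and $K_p = \PGSp_4(\Z_p)$, via the symplectic isomorphism),
\[
\abs{K_p\lambda(p)K_p/K_p} = p^{\left<2\rho,\lambda\right>}\,\frac{W(p^{-1})}{W_\lambda(p^{-1})},
\]
where $W(t) = \sum_{w\in W} t^{\ell(w)}$ and $W_\lambda$ is the stabilizer of $\lambda$.

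If I preferred a self-contained argument, I would proceed as follows. Use the Iwahori decomposition $K_p = \bigsqcup_w I w I$ together with the fact that $I\lambda(p)I/I$ has cardinality $p^{\left<2\rho,\lambda\right>}$ for dominant $\lambda$. The latter follows because the conjugation action of $\lambda(p)$ on the root groups contributes $p^{\left<\alpha,\lambda\right>}$ for each positive root $\alpha$. This gives upper and lower bounds that differ by at most a factor $|W|$. Since the ratio $W(p^{-1})/W_\lambda(p^{-1})$ lies in $[1,|W|]$, the $\asymp$ follows with constants depending only on the (fixed) Weyl group, uniformly in $p$ and $\mu$.

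For $\uH$ the same argument applies, but with relative root data. By \cref{lemma:H_diag_torus}, $T_{\uH}$ is a one-dimensional maximal $\Q_p$-split torus, and $\uH(\Q_p)\isom \PGL_2(\Q_p(i))$ with $\Q_p(i)$ unramified quadratic since $p\equiv 3 \pmod 4$. The relative root system is of type $A_1$, and its root space has dimension $2$. Here I would apply Macdonald's formula for quasi-split unramified groups, or just compute directly. For $\mu(p)=\diag(1,p^k)$, the cosets $K^{\uH}_p \diag(1,p^k) K^{\uH}_p/K^{\uH}_p$ correspond to vertices at distance $k$ in the Bruhat--Tits tree of $\PGL_2$ over $\Q_p(i)$. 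That tree has valency $p^2+1$, so the number of such vertices is $(p^2+1)p^{2(k-1)} \asymp p^{2k}$. Since the root space is two-dimensional, the root $\rho_{\uH}$ pairs with $\mu$ to give $k$, and therefore $p^{2k} = p^{2\norm{\mu}^*_{\uH}}$.

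The main obstacle is normalization. I have to make sure the paper's convention, with $\rho_{\uH}$ taken to be the root itself rather than half of it, matches the exponent once the two-dimensional root space is accounted for. The tree count pins this down. Everything else is standard.
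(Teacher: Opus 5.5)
Your proof is correct. For $\uG$ it is essentially the paper's argument. The paper also reduces to a positive (dominant) cocharacter via the Weyl group. It then quotes Casselman's estimate $\abs{K_p\mu(p)K_p/K_p}\asymp\delta^{-1}(\mu(p))=p^{2\left<\rho_{\uG},\mu\right>}$, with $\delta$ the modulus character of $\mathfrak{n}$. You instead quote Macdonald's exact formula, which is the same one the paper uses in its appendix, and observe that $W(p^{-1})/W_\lambda(p^{-1})\in[1,\abs{W}]$.

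The genuine difference is for $\uH$. The paper says Casselman's result applies verbatim to $\uH$ with its relative root system. The factor $2$ in the exponent then comes automatically from $\abs{\det\Ad_{\mathfrak n}}_p$ on the two-dimensional root space, which is exactly why $\rho_{\uH}$ is defined to be the root itself. You instead count vertices at distance $k$ in the $(p^2+1)$-regular Bruhat--Tits tree over $\Q_p[\bi]$, obtaining the exact value $(p^2+1)p^{2(k-1)}$. Your route is more elementary and settles the normalization question concretely. The paper's route is uniform and needs no explicit model of $\uH(\Q_p)$.

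That model does need one correction. The group $\uH(\Q_p)=\{g\in\GL_2(\Q_p[\bi]):\det g\in\Q_p^\times\}/\Q_p^\times$ is not $\mathrm{PGL}_2(\Q_p[\bi])$. It contains the nontrivial central element given by the class of $\bi I$, which dies in $\mathrm{PGL}_2$, and its image in $\mathrm{PGL}_2(\Q_p[\bi])$ has index $2$. This does not affect your count, for three reasons:
\begin{enumerate}
\item $\bi I\in K_p^{\uH}$.
\item The preimage of $\mathrm{PGL}_2(\Z_p[\bi])$ in $\uH(\Q_p)$ is exactly $K_p^{\uH}$, so $\uH(\Q_p)/K_p^{\uH}$ injects into the vertex set of the tree.
\item The image of $K_p^{\uH}$ contains that of $\SL_2(\Z_p[\bi])$, which acts transitively on every sphere around the base vertex.
\end{enumerate}

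In your optional self-contained sketch, the decomposition $K_p=\bigsqcup_w IwI$ together with $\abs{I\lambda(p)I/I}$ does not by itself give bounds within a factor $\abs{W}$; naively you lose a factor $\abs{K_p/I}\asymp p^{\ell(w_0)}$. What works is the decomposition $K_p\lambda(p)K_p/K_p=\bigsqcup_{\nu\in W\lambda}I\nu(p)K_p/K_p$. Each piece has size $[I:I\cap\nu(p)K_p\nu(p)^{-1}]\le p^{\left<2\rho,\lambda^+\right>}$, with equality for the $\nu$ lying in the chamber determined by $I$.
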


\begin{proof}
    This follows directly from the general result {\hspace{1sp}\cite[Proposition 1.6]{Casselman2012RemarksOM}}.  
    Indeed, let $\mathfrak{n}$ denote the unipotent subalgebra of the Lie algebra of $\uG$ generated by the root spaces for the positive roots, and let $\delta:=\abs{\det{\rm{Ad}_\mathfrak{n}}}_p$ denote the modulus character. Then $\delta(\mu(p))=p^{-2\left< \rho,\mu\right>}$. It is proved in \cite[Proposition 1.6]{Casselman2012RemarksOM} that if $\mu$ is positive in the sense that $\left<\alpha, \mu\right> \ge0$ for every positive root $\alpha$, then $\delta^{-1}(\mu(p))\asymp |K_p\mu(p)K_p/K_p|$. Since the Weyl orbit of any cocharacter $\mu$ contains a positive cocharacter, we get \eqref{VolFormula}. The same proof also applies verbatim for $\uH$.
    
\end{proof}

\begin{lemma}[Intersection bound]\label{MarshallForAllPrimes}
    For any cocharacter $\mu\in X_*(T_{\uG})$ we have
    \begin{equation*}
        \abs{(H_pK_p\cap K_p\mu(p)K_p)/K_p}\ll\sum_{\nu\in W_{\uG}.\mu\cap X_*(T_{\uH})}p^{2\norm{\nu}_{\underline{H}}^*}.
    \end{equation*}
\end{lemma}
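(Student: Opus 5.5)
The plan is to follow Marshall's argument \cite[Lemma 3.7]{marshallupperboundsmaassforms}: decompose $H_p$ by its own Cartan decomposition and count $K_p$-cosets piece by piece.

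Since $p\equiv 3\pmod 4$, \cref{lemma:H_diag_torus} says that $T_{\uH}$ is a maximal $\QQ_p$-split torus of $\uH$. I would fix a special maximal compact subgroup of $H_p$ adapted to $T_{\uH}$; the natural choice is $K_p^{\uH}=\uH(\ZZ_p)=H_p\cap K_p$, and I would check that it is special. The Cartan decomposition then reads
\begin{equation*}
H_p=\bigsqcup_{\nu} K_p^{\uH}\nu(p)K_p^{\uH},
\end{equation*}
where $\nu$ runs over dominant cocharacters in $X_*(T_{\uH})$. Because $p$ is inert in $\QQ(i)$, the field $\QQ_p(i)$ is unramified, and it is enough to consider $\nu(p)$; other uniformizer choices differ only by elements of $K_p^{\uH}$. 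Consequently
\begin{equation*}
H_pK_p=\bigcup_\nu K_p^{\uH}\nu(p)K_p,
\end{equation*}
with the union over dominant $\nu$.

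Now take the piece indexed by $\nu$. Its image in $G_p/K_p$ lies in the double coset $K_p\nu(p)K_p$, and by the Cartan decomposition of $G_p$ this double coset meets $K_p\mu(p)K_p$ exactly when $\nu\in W_{\uG}.\mu$. So only those $\nu$ lying in $W_{\uG}.\mu\cap X_*(T_{\uH})$ contribute. Each contributing piece $K_p^{\uH}\nu(p)K_p/K_p$ is the image of $K_p^{\uH}\nu(p)K_p^{\uH}/K_p^{\uH}$, since $K_p^{\uH}\subset K_p$. Its cardinality is therefore at most $|K_p^{\uH}\nu(p)K_p^{\uH}/K_p^{\uH}|$, which is $\asymp p^{2\norm{\nu}^*_{\uH}}$ by \cref{VolLemma}.

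Summing over the finitely many contributing $\nu$ gives the bound. The number of terms is at most $|W_{\uG}|=8$, and \cref{VolLemma} has uniform implied constants, so the final constant is absolute. Summing over all $\nu$ in the orbit intersection, rather than only dominant ones, only enlarges the right-hand side, which is harmless.

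I expect the main obstacle to be justifying the Cartan decomposition of $H_p$ with respect to $K_p^{\uH}$ and $T_{\uH}(\QQ_p)$. This amounts to knowing that $\uH(\ZZ_p)$ is a hyperspecial (or at least special) maximal compact subgroup, compatible with $T_{\uH}$, with an apartment containing the relevant vertex. Here $\uH$ is the restriction of scalars of $\mathrm{PGL}_2$ from $\QQ_p(i)$ down to $\QQ_p$, so this reduces to the Cartan decomposition for $\mathrm{PGL}_2(\QQ_p(i))$ with respect to $\mathrm{PGL}_2(\ZZ_p[i])$ and diagonal matrices. Since $\QQ_p(i)$ is unramified, that decomposition is simply the Smith normal form over $\ZZ_p[i]$. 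The remaining step is to check, via $\psi_p$ and \cref{lemma:H_diag_torus}, that under this identification the diagonal elements $\diag(1,p^k)$ correspond to $\nu(p)$ for $\nu\in X_*(T_{\uH})$.
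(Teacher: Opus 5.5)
Your argument is correct and is the same as the paper's: the paper cites Marshall's Lemma 5.12 for the identity $H_pK_p\cap K_p\mu(p)K_p=\bigcup_{\nu\in W_{\uG}.\mu\cap X_*(T_{\uH})}K_p^{\uH}\nu(p)K_p$, which you rederive from the Cartan decompositions of $H_p$ and $G_p$, and then applies \cref{VolLemma} exactly as you do. One harmless inaccuracy: $\uH(\QQ_p)=\{g\in\GL_2(\QQ_p(i)) : \det g\in\QQ_p^\times\}/\QQ_p^\times$ is not $\mathrm{PGL}_2(\QQ_p(i))$, since the natural map has kernel of order $2$ generated by the class of $\diag(\bi,\bi)$; however, Smith normal form over $\ZZ_p[i]$, followed by moving a unit $\diag(u,1)$ across the diagonal factor so that both outer factors have determinant in $\ZZ_p^\times$, still gives the Cartan decomposition of $H_p$ that you need.
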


\begin{proof}
   In \cite[Lemma 5.12]{marshallupperboundsmaassforms} the Cartan decomposition is used to show that 
    \begin{equation*}
    H_pK_p\cap K_p\mu(p) K_p= \bigcup_{\nu\in W_{\uG}.\mu\cap X_*(T_{\uH})}K_p^{\underline{H}}\nu(p)K_p.
    \end{equation*}
    Therefore the result follows from \cref{VolLemma}.
    
\end{proof}

\subsubsection{Explicit computation of $\norm{\cdot}^*_{\uH}$}

By \cref{lemma:H_diag_torus}, $T_{\uH}(\QQ_p)= \Q_p^\times \cdot \left\{ t_a = \diag(1, a) \bigm\vert a \in \Q_p^\times \right\}$. Let us compute the roots with respect to this maximal split torus. Observe that 
\begin{equation*}
    t_a\begin{pmatrix}x&y\\z&w\end{pmatrix}t_a^{-1}=\begin{pmatrix}x&ya^{-1}\\za&w\end{pmatrix},
\end{equation*}
so the (non-trivial) eigenvectors of $\Ad(t_a)$ on the Lie algebra of $\uH$ are $\left(\begin{smallmatrix}0&0\\u&0\end{smallmatrix}\right)$ with eigenvalue $a$ and $\left(\begin{smallmatrix}0&u\\0&0\end{smallmatrix}\right)$ with eigenvalue $a^{-1}$. Thus we have two roots: $t_a\mapsto a$, which we write additively as $e_1$, and $t_a\mapsto a^{-1}$, which we write additively as $-e_1$. Each eigenspace has dimension $2$, so declaring $e_1$ as the positive root we have $\rho_{\uH}=e_1$. The Weyl group has order $2$, its non-trivial element swapping $e_1$ and $-e_1$. 

If $\mu\in X_*(T_{\uH})$ then $\mu(p)=\diag(1, p^l)$ for some $l\in\ZZ$, so 
$\left<\rho_{\uH}, \mu\right>=l$. Suppose $w\in W_{\uG}$ is such that $w.\mu$ is again a cocharacter of $T_{\uH}$. By \eqref{eq:Weyl_group_gens}, the Weyl group acts by permuting the diagonal entries of $\Q_p^\times \cdot  \diag(1, 1, p^l, p^l) \in T_{\uG}(\Q_p)$, so $w.\mu = \mu$ or $-\mu$.

We are now ready to prove the main result of this section.

\begin{proof}[Proof of \cref{IntersectionWithH-estimate}]
By \eqref{eq:good_conjugate_group} it suffices to consider the case $g=1$.

Let $\mu \in X_*(T_{\uH})$ be the cocharacter corresponding to the Hecke operator $\tau_{0,l}(p)$, so that $\mu(p) = \diag(1, p^l)$. Then by the discussion above we have $\norm{w.\mu}^*_{\uH}=l$ whenever $w\in W_{\uG}$ is such that $w.\mu$ is a cocharacter of $T_{\uH}$. Thus the result follows from the bound of \cref{MarshallForAllPrimes} for $\cL_{0, l}(p, \uH)$. 

\end{proof}

%%%%%%%%%%%%%%%%%%%%%%%%%%%%%%%%%%%%%%%%%%%%%%%%%%%%%%%

\section{Decomposition of local amplifier}\label{sec:twoHeckeOperators}

The proof of \cref{maintech} uses certain carefully constructed Hecke operators in $\cH_p$. Their building blocks will be the operators $T_2(p)$ and 
\begin{equation}\label{eq:AlexOperator}
    \sigma(p):=T_2(p)^2-(p+1)T_1(p)^2.
\end{equation}  
Central to our approach are upper bounds for the $L^1(H)$-seminorms of $T_2(p)$ and $\sigma(p)$, as well as their squares  $T_2(p)^2$ and $\sigma(p)^2$.
The required bounds, as we shall see below, follow easily from the results of \cref{sec:HeckeTranslates} once we decompose these operators as linear combination of basic Hecke operators $\tau_{m,l}(p)$. To obtain these decompositions we used a computer program, which is presented in \cref{section:computation_appendix}.

\begin{lemma}[Decomposition into basic operators] \label{lem:DecompositionsUsingComputer} Let $p$ be an odd prime, and for simplicity denote $\tau_{m, l} = \tau_{m,l}(p)$. Then
    \begin{enumerate}[label=(\alph*)]
        \item
        \begin{align*}
            T_2(p)^2=(p^4+p^3+p^2+p)\tau_{0,0}+(p+1)\tau_{0,2}+(p-1)\tau_{1,2}+\tau_{2,4},
        \end{align*}
        
        \item 
        \begin{align*}
            \sigma(p)=-(p^3+p^2+p+1)\tau_{0,0}-(p^2+p+2)\tau_{1,2}+\tau_{2,4},
        \end{align*}
        
        \item
        \begin{align*}
            \sigma(p)^2 =\ &(2 p^8 + 4p^7 + 10p^6 + 15p^5 + 18p^4 + 17p^3 + 11p^2 + 6p + 1)\tau_{0,0} \\
            &+(2p^5 + 3p^4 + 6p^3 + 9p^2 + 8p + 4)\tau_{0,2} \\
            &- (2p^6 - 2p^5 - 11p^3 - 7p^2 - 6p) \tau_{1,2} +(p^2+p)\tau_{0,4} \\
            &-(2p^3+2p^2+4p)\tau_{1,4}+(2p^4 - 3p^3 + 3p^2 + 6)\tau_{2,4}\\
            & +(p-1)\tau_{2,6}-(2p^2 + p + 5)\tau_{3,6}+\tau_{4,8}.
        \end{align*}
    \end{enumerate}
\end{lemma}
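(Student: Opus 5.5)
We will transport everything to $\PGSp_4(\Q_p)$ via $\psi_p$ and work in the spherical Hecke algebra $\cH(\PGSp_4(\Q_p),\PGSp_4(\Z_p))$. By \cref{lemma:symplectic_Hecke_op}, $\tau_{m,l}$ corresponds to the characteristic function of $K\diag(1,p^m,p^l,p^{l-m})K$. In particular $T_1(p)$ corresponds to $\diag(1,1,p,p)$ and $T_2(p)$ to $\diag(1,p,p^2,p)$. These are the classical generators of the $\GSp_4$ Hecke algebra.

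The main tool is the Satake isomorphism
\begin{equation*}
\mathcal{S}:\cH(\PGSp_4(\Q_p),K)\to \C[X_*(T)]^{W},
\end{equation*}
which is an algebra isomorphism onto Weyl-invariant Laurent polynomials in the Satake parameters $(u_0,u_1,u_2)$, with $u_0u_1^{-1}u_2^{-1}$ etc.\ determined by the $W$-action in \eqref{eq:Weyl_group_gens}. The plan has three steps.

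First, for every dominant cocharacter $\lambda=(m,l)$ with $l\le 8$, compute $\mathcal{S}(\tau_{m,l})$ explicitly. One route is Macdonald's formula, which writes $\mathcal{S}(\1_{K\lambda(p)K})$ as $p^{\langle\rho,\lambda\rangle}$ times a ratio of $W$-symmetrized expressions with factors $(1-p^{-1}e^{-\alpha^\vee})/(1-e^{-\alpha^\vee})$. An equivalent route is to enumerate left cosets $K\backslash K\lambda(p)K$ by Iwasawa representatives, that is, upper-triangular matrices with diagonal $\diag(p^{a_1},p^{a_2},p^{l-a_1},p^{l-a_2})$. Each coset then contributes $\delta^{1/2}$ times the monomial. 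Either way the output is a triangular expression
\begin{equation*}
\mathcal{S}(\tau_\lambda)=p^{\langle\rho,\lambda\rangle}m_\lambda+\sum_{\mu<\lambda}c_{\lambda\mu}(p)\,m_\mu,
\end{equation*}
where $m_\mu$ is the $W$-orbit sum and $\mu<\lambda$ ranges over dominant weights with $\lambda-\mu$ a sum of positive coroots (the same similitude class).

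Second, compute $\mathcal{S}(T_2(p))^2$, $\mathcal{S}(T_1(p))^2$, $\mathcal{S}(\sigma(p))$ and $\mathcal{S}(\sigma(p))^2$ as polynomials in the orbit sums. This is done by multiplying the monomials, which the computer program handles.

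Third, invert the unitriangular system to express each product in the basis $\mathcal{S}(\tau_{m,l})$. We peel off the top dominant weight, subtract, and repeat down the dominance order. The only weights that occur are those $\le 2\lambda_{T_2}=(2,4)$, or $\le(4,8)$ for $\sigma(p)^2$, and these are exactly the $\tau_{m,l}$ listed in the statement.

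As sanity checks we use the following:
\begin{itemize}
\item The leading coefficients in (a), (b) and (c) are $1$, since $\tau_{1,2}^2$ has top term $\tau_{2,4}$ with multiplicity one.
\item The coefficient of $\tau_{0,0}$ in a square $\tau^2$ equals $\norm{\tau}_2^2$, because $\tau_{m,l}$ is self-adjoint. For example, in (a) it should be $\abs{K\diag(1,p,p^2,p)K/K}=p^4+p^3+p^2+p$, which matches.
\item Evaluating at the trivial character, where each $\tau$ maps to its degree, must give consistent identities.
\end{itemize}

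The main obstacle is the bookkeeping in the Macdonald / coset expansion for the higher weights $(3,6)$, $(2,6)$, $(1,4)$, $(0,4)$ and $(4,8)$ in part (c). These entail many cancellations of $p$-powers, so they are best done symbolically by computer (as the paper indicates) rather than by hand.
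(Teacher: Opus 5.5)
Your proposal is correct and follows essentially the same route as the paper: transport to $\PGSp_4(\Q_p)$ via $\psi_p$, compute the spherical (Satake) transforms of the basic operators $\tau_{m,l}$ from Macdonald's formula, multiply, and expand the products in that basis by computer, with injectivity of the transform giving the decomposition. Your sanity checks are consistent with the stated formulas. The dominant weights below $(2,4)$ and $(4,8)$ are exactly the $\tau_{m,l}$ that appear. The $\tau_{0,0}$-coefficient of a square is $\sum b_{m,l}^2\vol(K_p g_{m,l}(p)K_p)$, and it reproduces both $p^4+p^3+p^2+p$ in (a) and $2p^8+4p^7+\dots+1$ in (c).
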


\begin{proof}
    Given in \cref{section:computation_appendix}. 
    
\end{proof}

\begin{proposition}[Consequences for norms]\label{prop:IntersectionOfSpecialOperatorsWithH}
Let $p \neq q$ be primes which are good for $g\in\SV_2(\bar{\QQ} \cap \RR)$. Then 
\begin{align*}
    &\norm{T_2(p)}_{L^1(gHg^{-1})}=0, \qquad &\norm{T_2(p)^2}_{L^1(gHg^{-1})}\ll p^5, \qquad &\norm{T_2(p)T_2(q)}_{L^1(gHg^{-1})}=0, \\
    &\norm{\sigma(p)}_{L^1(gHg^{-1})}\ll p^3, \qquad &\norm{\sigma(p)^2}_{L^1(gHg^{-1})}\ll p^{10}, \qquad &\norm{\sigma(p)\sigma(q)}_{L^1(gHg^{-1})}\ll p^3q^3.
\end{align*}
The implied constants are absolute.
\end{proposition}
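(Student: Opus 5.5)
The plan is to read off each bound from the decompositions in \cref{lem:DecompositionsUsingComputer}, using \cref{generalL1NormRestrictedToHEstimate} for the single-prime operators and \cref{lem:DifferentPrimesAreMultiplicative} together with \cref{noIntersectionLemma} and \cref{IntersectionWithH-estimate} for the products at two primes. Throughout, $p$ and $q$ are good for $g$. Hence $g\uH g^{-1}$ is defined over both $\QQ_p$ and $\QQ_q$, and by \eqref{eq:good_conjugate_group} we may compute $\cL$ with $\uH$ in place of $g\uH g^{-1}$.

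For single primes, only the coefficients $c_l$ of the terms $\tau_{0,l}(p)$ matter; all $\tau_{m,l}$ with $m>0$ contribute zero. Since $T_2(p)=\tau_{1,2}(p)$, we get $\norm{T_2(p)}_{L^1(gHg^{-1})}=0$ directly from \cref{noIntersectionLemma} and \eqref{eq:L1NormsNonsense}.

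For $T_2(p)^2$, part (a) gives $c_0=p^4+p^3+p^2+p$ and $c_2=p+1$. The bound is therefore $\ll p^4+(p+1)p^4\ll p^5$.

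For $\sigma(p)$, part (b) has only $c_0=-(p^3+p^2+p+1)$, which gives $\ll p^3$.

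For $\sigma(p)^2$, part (c) has three relevant coefficients:
\begin{itemize}
\item $c_0\ll p^8$;
\item $c_2\ll p^5$, contributing $\ll p^5\cdot p^4=p^9$;
\item $c_4=p^2+p$, contributing $\ll p^2\cdot p^8=p^{10}$.
\end{itemize}
The total is $\ll p^{10}$. The coefficients are explicit polynomials and there are boundedly many terms, so the implied constants are absolute.

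For mixed products, expand $T_2(p)T_2(q)=\tau_{1,2}(p)\tau_{1,2}(q)$ and apply \cref{lem:DifferentPrimesAreMultiplicative}. The resulting bound is $\cL_{1,2}(p,g\uH g^{-1})\cdot\cL_{1,2}(q,g\uH g^{-1})=0$ by \cref{noIntersectionLemma}.

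For $\sigma(p)\sigma(q)$, expand both factors via (b) into nine products $\tau_{m,l}(p)\tau_{m',l'}(q)$ and use the triangle inequality for the seminorm \eqref{eq:HeckeNorm}. This needs a small check. Each product $\tau_{m,l}(p)\tau_{m',l'}(q)$ is a single double coset operator whose coset representatives are products of the representatives at $p$ and at $q$, because the primes are coprime. The product $\sigma(p)\sigma(q)$ is therefore a linear combination of these with coefficients multiplying. Since distinct pairs give distinct double cosets, the $L^1$ seminorm is additive over them, or at worst subadditive.

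By \cref{lem:DifferentPrimesAreMultiplicative}, every term with $m>0$ or $m'>0$ vanishes. The only surviving term is $\tau_{0,0}(p)\tau_{0,0}(q)$, with coefficient $(p^3+p^2+p+1)(q^3+q^2+q+1)$ and $\cL_{0,0}=1$. This gives $\ll p^3q^3$.

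The main obstacle is this bookkeeping for products at distinct primes. One must justify the triangle inequality for \eqref{eq:HeckeNorm} when the operators are linear combinations, and check that \cref{lem:DifferentPrimesAreMultiplicative} applies to each product of basic operators. Everything else is direct substitution of the computed coefficients.
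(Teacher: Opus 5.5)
Your proposal is correct and follows the paper's own argument. The paper reads off the coefficients from \cref{lem:DecompositionsUsingComputer}, applies \cref{generalL1NormRestrictedToHEstimate} to the single-prime operators, and handles $T_2(p)T_2(q)$ and $\sigma(p)\sigma(q)$ via \cref{lem:DifferentPrimesAreMultiplicative}, using that $\cL_{m,l}=0$ for $m>0$ and $\cL_{0,0}=1$. Your extra bookkeeping (subadditivity of the seminorm over the products of basic operators) is exactly what the paper's one-line proof leaves implicit.
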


\begin{proof}
    This follows immediately from \cref{lem:DecompositionsUsingComputer}, \cref{generalL1NormRestrictedToHEstimate}, and \cref{lem:DifferentPrimesAreMultiplicative}.
    
\end{proof}

%%%%%%%%%%%%%%%%%%%%%%%%%%%%%%%%%%%%%%%%%%%%%%%%%%%%%%%

\section{Construction of global amplifier}

In this section we will utilize the Hecke operators of \cref{sec:twoHeckeOperators} and properties of their eigenvalues from \cref{sec:Pitale_lift} to disperse the $L^2$-mass of Pitale lifts on $\SV_2(\ZZ)\bs\SV_2(\RR)$ as in \cref{maintech}. For each Pitale lift $\psi_n$, recall the relations in \eqref{eq:Hecke_eigenvalue} for its Hecke eigenvalues.

\begin{lemma}\label{lem:GlobalAmplifier}
    Let $g\in\SV_2(\bar{\QQ} \cap \RR)$. Let $\mathscr{P}_g$ denote the set of primes which are good for $g$ (see \cref{goodPrimes}). For every $n\in \N$ and $P$ sufficiently large in terms of $g$, there exist coefficients $c_n(p) \in \{-1, 0, 1\}$ and Hecke operators $\tau_n(p)\in\{T_2(p), \sigma(p)\}$ such that writing
    \begin{equation*}
        \tau_n = \tau_n(P, g) := \Big(\sum_{\substack{P \leq p \leq 2P \\ p \in \mathscr{P}_g}} c_n(p) \tau_n(p)\Big)^2
    \end{equation*}
    and $\tau_n(P, g) \psi_n = \sL_n(P, g) \psi_n$, we have $\sL_n(P, g) > 0$ and
    \begin{equation*}
        \frac{\norm{\tau_n}_{L^1(gHg^{-1})}}{\sL_n(P, g)} \ll_{g} \frac{\log{P}}{P}.
    \end{equation*}
\end{lemma}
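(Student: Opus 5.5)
The plan is to build the amplifier prime by prime, using the explicit eigenvalue formulas \eqref{eq:Hecke_eigenvalue} to choose between $T_2(p)$ and $\sigma(p)$. For each good prime $p$ we decide by the size of $x = \lambda_n(p)$, with the fixed threshold $|x| \geq 1/2$.

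\emph{Case $|x|\ge 1/2$.} Here $|\Lambda_{2,n}(p)| = (p+1)\,|p^{3/2}x + p - 1| \geq (p+1)(p^{3/2}/2 - p) \gg p^{5/2}$ for $p$ large.

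\emph{Case $|x|< 1/2$.} We compute the $\sigma(p)$-eigenvalue $\Lambda_{2,n}(p)^2-(p+1)\Lambda_{1,n}(p)^2$, which equals
\begin{equation*}
(p+1)\Big[(p+1)\big(p^{3/2}x+p-1\big)^2-p^2\big(p^{1/2}x+p+1\big)^2\Big].
\end{equation*}
Expanding the bracket, the $p^{7/2}x$ terms cancel. What remains is $-p^4(1-x^2)+O(p^{3}(1+|x|))$. So for $|x|<1/2$ and $p$ large, the $\sigma(p)$-eigenvalue has absolute value $\gg p^5$.

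Let $\mathscr{A}$ be the good primes in $[P,2P]$ in the first case and $\mathscr{B}$ those in the second. By \cref{manyPrimes}, one of these sets has cardinality $N \geq \varepsilon_g P/(2\log P)$. If it is $\mathscr{A}$, put $\tau_n(p)=T_2(p)$ and $c_n(p)=\sgn \Lambda_{2,n}(p)$ for $p\in\mathscr{A}$, and $c_n(p)=0$ otherwise. If it is $\mathscr{B}$, put $\tau_n(p)=\sigma(p)$ with $c_n(p)$ the sign of its eigenvalue on $\mathscr{B}$, and $0$ otherwise. Since the eigenvalues are real, $\sum_p c_n(p)\tau_n(p)$ then acts on $\psi_n$ by a sum of absolute values. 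Hence $\sL_n(P,g)$ is the square of that sum, so $\sL_n \gg (N P^{5/2})^2$ in the first case and $\sL_n \gg (N P^{5})^2$ in the second; in particular $\sL_n > 0$.

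For the norm, expand the square as $\sum_p \tau_n(p)^2 + \sum_{p\neq q} c_n(p)c_n(q)\tau_n(p)\tau_n(q)$. The seminorm $\norm{\cdot}_{L^1(gHg^{-1})}$ satisfies the triangle inequality, because it is an $\ell^1$-sum of coefficients over cosets. We then apply \cref{prop:IntersectionOfSpecialOperatorsWithH} term by term.
\begin{itemize}
\item In the $T_2$ case, the diagonal terms contribute $\ll N P^5$ and the cross terms vanish. The ratio is therefore $\ll NP^5/(N^2P^5) = 1/N \ll_g \log P/P$.
\item In the $\sigma$ case, the diagonal terms contribute $\ll N P^{10}$ and the cross terms $\ll N^2 P^6$. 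The ratio is $\ll 1/N + P^{-4} \ll_g \log P/P$.
\end{itemize}

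The main obstacle is the second case: showing that $\sigma(p)$ is large exactly when $T_2(p)$ is small. This rests on the cancellation of the $p^{7/2}x$ terms in the expansion above, which forces an eigenvalue of size $p^5$. That size is what makes the diagonal bound $p^{10}$ from \cref{lem:DecompositionsUsingComputer} match the square of the eigenvalue. One should also check that no a priori bound on $\lambda_n(p)$ is needed: large $|x|$ only helps in the first case, and the second case has $|x|<1/2$ by definition.
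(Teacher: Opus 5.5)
Your proof is correct and follows essentially the same route as the paper. You split the good primes in $[P,2P]$ by the size of $\lambda_n(p)$, use $T_2(p)$ with signs $\sgn\Lambda_{2,n}(p)$ when at least half have large $|\lambda_n(p)|$, and otherwise use $\sigma(p)$ with sign $-1$ on the small ones, then bound the square term by term with \cref{prop:IntersectionOfSpecialOperatorsWithH}. The differences are cosmetic: you use threshold $1/2$ instead of $1/10$, and you expand the $\sigma(p)$-eigenvalue exactly, whereas the paper only bounds $\Lambda_{2,n}(p)^2\le p^5/50$ and $(p+1)\Lambda_{1,n}(p)^2\ge p^5/2$. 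So the cancellation you call the main obstacle is not actually needed: the uncancelled terms would only be of size $O(p^{9/2}|x|)$, well below the main term $-(1-x^2)p^5$.
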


\begin{remark}
    The important point in the result above is that $\frac{\log{P}}{P} \to 0$ as $P \to \infty$. Observe also that $\tau_n(P, g)$ is positive definite, as $c_n(p) \in \R$ and the $\tau_n(p)$ are self-adjoint.
\end{remark}

\begin{proof}
    Denote $\mathscr{P}_g(P) := \mathscr{P}_g \cap [P, 2P]$. By \cref{manyPrimes}, there is $\varepsilon_g > 0$ such that $|\mathscr{P}_g(P)| \geq \frac{\varepsilon_g P}{\log{P}}$ for every $P$ sufficiently large in terms of $g$. Let
    \begin{equation*}
        \mathscr{A}_n(P) := \left\{ p\in \mathscr{P}_g(P) : |\lambda_n(p)| \leq \tfrac{1}{10}\right\}.
    \end{equation*}
    
    \noindent\textbf{Case 1:} $|\mathscr{A}_n(P)| < \frac{1}{2}\cdot |\mathscr{P}_g(P)|$.
    
    In this case, choose $\tau_n(p) = T_2(p)$. For each $p \in \mathscr{P}_g(P)$ we get $\tau_n(p)\psi_n = \Lambda_{2, n}(p)\psi_n$, where \eqref{eq:Hecke_eigenvalue} gives
    \begin{equation*}
        \Lambda_{2, n}(p) = (p+1)\big(p^{3/2}\lambda_n(p) + p-1\big).
    \end{equation*}
    Denote $\overline{\mathscr{A}_n}(P) := \mathscr{P}_g(P) \setminus \mathscr{A}_n(P)$. If $p \in \overline{\mathscr{A}_n}(P)$ and $P$ is sufficiently large, the display above implies $|\Lambda_{2, n}(p)| \geq \frac{1}{100}\cdot p^{5/2}$. Choosing $c_n(p) = 0$ if $p \in \mathscr{A}_n(P)$ and $c_n(p) = \sgn(\Lambda_{2, n}(p))$ if $p \in \overline{\mathscr{A}_n}(P)$, we conclude that
    \begin{equation*}
        \mathscr{L}_n(P, g) = \Big(\sum_{p \in \overline{\mathscr{A}_n}(P)} \left|\Lambda_{2, n}(p)\right| \Big)^2 \gg_g  \frac{P^7}{(\log{P})^2}.
    \end{equation*}
    
    Furthermore, by the triangle inequality we have
    \begin{equation*}
        \norm{\tau_n(P, g)}_{L^1(gHg^{-1})} \leq \sum_{p \in \overline{\mathscr{A}_n}(P)} \norm{T_2(p)^2}_{L^1(gHg^{-1})} + \sum_{p \ne q \in \overline{\mathscr{A}_n}(P)} \norm{T_2(p) T_2(q)}_{L^1(gHg^{-1})}.
    \end{equation*}
    Then \cref{prop:IntersectionOfSpecialOperatorsWithH} gives 
    \begin{equation*}
        \norm{\tau_n(P, g)}_{L^1(gHg^{-1})} \ll \frac{P^6}{\log{P}},
    \end{equation*}
    and the desired result follows.\\

    \noindent\textbf{Case 2:} $|\mathscr{A}_n(P)| \geq \frac{1}{2}\cdot |\mathscr{P}_g(P)|$.
    
    In this case, choose $\tau_n(p) = \sigma(p)$. For each $p \in \mathscr{P}_g(P)$ we get $\tau_n(p) \psi_n = \Lambda_{\sigma, n}(p)\psi_n$, where \eqref{eq:AlexOperator} implies
    \begin{equation*}
        \Lambda_{\sigma, n}(p) = \Lambda_{2, n}(p)^2 - (p+1)\Lambda_{1, n}(p)^2.
    \end{equation*}
    If $p \in \mathscr{A}_n(P)$ and $P$ is sufficiently large, then by \eqref{eq:Hecke_eigenvalue} we obtain
    \begin{equation*}
         \Lambda_{1,n}(p)^2= p^2\big(p^{1/2} \lambda_n(p) + p + 1\big)^2 \geq p^2\big(p - p^{1/2}\big)^2 \geq \frac{p^4}{2}
    \end{equation*}
    and \begin{equation*}
        \Lambda_{2,n}(p)^2=(p+1)^2\big(p^{3/2} \lambda_n(p) + p - 1\big)^2 \leq (p+1)^2\big(\tfrac{1}{10}\cdot p^{3/2} + p - 1\big)^2 \leq \frac{p^5}{50},
    \end{equation*}
    which combine into
    \begin{equation*}
        \Lambda_{\sigma, n}(p) \leq \frac{p^5}{50} - (p+1)\cdot\frac{p^4}{2} \leq -\frac{p^5}{3}.
    \end{equation*}
    
    Choosing $c_n(p) = 0$ if $p \in \overline{\mathscr{A}_n}(P)$ and $c_n(p) = -1$ if $p \in \mathscr{A}_n(P)$, we conclude that
    \begin{equation*}
        \mathscr{L}_n(P, g) = \Big(\sum_{p \in \mathscr{A}_n(P)} \left|\Lambda_{\sigma, n}(p)\right| \Big)^2 \gg_g  \frac{P^{12}}{(\log{P})^2}.
    \end{equation*}
    
    As before, the triangle inequality yields
    \begin{equation*}
        \norm{\tau_n(P, g)}_{L^1(gHg^{-1})} \leq \sum_{p \in \mathscr{A}_n(P)} \norm{\sigma(p)^2}_{L^1(gHg^{-1})} + \sum_{p \ne q \in \mathscr{A}_n(P)} \norm{\sigma(p) \sigma(q)}_{L^1(gHg^{-1})}.
    \end{equation*}
    Another application of \cref{prop:IntersectionOfSpecialOperatorsWithH} then gives 
    \begin{equation*}
        \norm{\tau_n(P, g)}_{L^1(gHg^{-1})} \ll \frac{P^{11}}{\log{P}} + \frac{P^8}{(\log{P})^2} \ll\frac{P^{11}}{\log{P}},
    \end{equation*}
    and we once again obtain the desired result.
    
\end{proof}

%%%%%%%%%%%%%%%%%%%%%%%%%%%%%%%%%%%%%%%%%%%%%%%%%%%%%%%

\section{Classification of subgroups} \label{classificationSec}

Recall that $G=\SV_2(\RR)$, $H=\SL_2(\CC)$, and $M=\{\diag(\alpha, \alpha') \in M_2(\bH) \mid \alpha \bar{\alpha} = 1\}$. In this section we aim to classify the connected real algebraic subgroups of $G$ contained in $HM$. The main result is \cref{finalClassification}, which says that each of these subgroups is either small, in a sense made precise below, or is contained in a conjugate of $H$. 

\subsection{Small subgroups}\label{small_groups_section}

View $\SV_2$ as a $\bar{\QQ}$-group. Fix a reductive subgroup $\underline{L}$ of $\SV_2$ (defined over $\bar{\QQ}$). Fix $\underline{T}_{\underline{L}} \subset\underline{T}_{\SV_2}$ maximal tori of $\underline{L}$ and $\SV_2$, and let $W_{\underline{L}} ,W_{\SV_2}$ denote the corresponding Weyl groups and $\Phi_{\underline{L}}, \Phi_{\SV_2}$ the corresponding root systems. Fix a notion of positivity and let $\rho_{\underline{L}}, \rho_{\SV_2}$ be the corresponding half-sums of positive roots of $\Phi_{\underline{L}}$ and  $\Phi_{\SV_2}$.
Following \cite{marshallupperboundsmaassforms}, for any cocharacter $\mu$ of $\underline{T}_{\SV_2}$ we define
\begin{equation*}
    \norm{\mu}^*_{\SV_2}=\max_{w\in W_{\SV_2}}\left<\rho_{\SV_2},w.\mu\right>,
\end{equation*}
and analogously define $\norm{\cdot}^*_{\underline{L}}$ for cocharacters of $\underline{T}_{\underline{L}}$ (where we take $w \in W_{\underline{L}}$ and replace $\rho_{\SV_2}$ with $\rho_{\underline{L}}$). 
Inspired by \cite{marshallupperboundsmaassforms}, we recall the following notions of smallness, which were previously used in \cite{ShemTovSilberman:AQUE4d} in the context of QUE. 

\begin{definition}[Smallness] \label{smallness-def} \
\begin{enumerate}
    \item We say that $\underline{L}$ is \emph{tempered} in $\SV_2$ if for every cocharacter $\mu$ of $\underline{T}_{\underline{L}}$ we have 
    \begin{equation*}
        \norm{\mu}^*_{\underline{L}}\le\frac{1}{2}\norm{\mu}^*_{\SV_2}.
    \end{equation*}
    
    \item We say that $\underline{L}$ is \emph{weakly small} in $\SV_2$ if it is tempered in $\SV_2$ and either $\dim \underline{T}_{\underline{L}} < \dim \underline{T}_{\SV_2}$, or there exists a cocharacter $\mu$ of $\underline{T}_{\underline{L}}=\underline{T}_{\SV_2}$ such that 
    \begin{equation*}
        \max_{w\in W_{\SV_2}}\norm{w.\mu}_{\underline{L}}^* < \frac{1}{2} \norm{\mu}^*_{\SV_2}.
    \end{equation*}
    
    \item We say that an abstract subgroup $S\subset \SV_2(\RR)$ is \emph{small} if there exists a weakly small subgroup $\underline{P}$ of $\SV_2$, defined over $\bar{\QQ}\cap\RR$, such that $S\subset \underline{P}(\RR)$. Call $S$ \emph{virtually small} if it has a finite-index small subgroup.
\end{enumerate}    
\end{definition}

\subsection{Subgroups of \texorpdfstring{$G$}{} contained in \texorpdfstring{$HM$}{}}

Let $K$ denote the maximal compact subgroup of $G$ given by
\begin{equation*}
    K=\left\{\begin{pmatrix} a & b \\ -b' & a' \end{pmatrix} \in M_2(\bH) \Bigm\vert \abs{a}^2+\abs{b}^2=1, ab^* \in V^3\right\},
\end{equation*}
and let $N$ denote the unipotent subgroup of $G$ given by 
\begin{equation*}
    N = \left\{n(u) := \begin{pmatrix} 1 & u \\ 0 & 1 \end{pmatrix} \Bigm\vert u \in V^3\right\},
\end{equation*}
where we recall that $V^3 = \R + \R \bi + \R \bj$.

\begin{lemma}\label{smallDimUnipotentLemma}
    Suppose $U\subset G$ is a unipotent subgroup (that is, closed and conjugate to a subgroup of $N$) and $U$ is contained in the set $HM$. Then $U$ has dimension at most $2$.   
\end{lemma}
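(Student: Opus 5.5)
We reduce to a statement about unipotent elements of $HM$, and then to a dimension count on Lie algebras. Note first that $HM$ is a closed subset of $G$: $H$ is closed and $M$ is compact. Write $U = xU_0x^{-1}$ with $U_0 \subset N$ a connected (or at least closed) subgroup; being unipotent, $U$ is connected and equals $\exp(\mathfrak{u})$ for its Lie algebra $\mathfrak{u}$. The goal is to show $\dim \mathfrak{u} \le 2$.

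\textbf{Step 1: unipotent elements of $HM$ lie in $H$.} Let $u \in U \subset HM$, and write $u = hm$ with $h \in H$, $m \in M$. Since $M$ normalizes $H$, the product $HM$ is a group. Moreover, $H$ is the identity component of $HM$ (up to finite index), because $M \cap H$ has dimension $1$ while $M \cong \mathrm{SU}(2)$ has dimension $3$. In fact $HM$ is a reductive group whose connected component contains $H$ as a normal subgroup with compact quotient $HM/H \cong M/(M\cap H)$. The image of the unipotent element $u$ in the compact group $HM/H$ is unipotent, hence trivial. Thus $u \in H$, so $U \subset H$.

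\textbf{Step 2: unipotent subgroups of $H$.} Since $H \cong \SL_2(\CC)$, every unipotent subgroup of $H$ is conjugate in $H$ to a closed subgroup of the upper triangular unipotent group $\{\left(\begin{smallmatrix} 1 & z\\ 0 & 1\end{smallmatrix}\right) : z \in \CC\}$, which has real dimension $2$. Hence $\dim U \le 2$.

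The main obstacle is making Step 1 rigorous. We must justify that $HM$ is a Lie group in which $H$ is normal with compact quotient, and that unipotent elements map trivially to that quotient. We would argue this algebraically: the Zariski closure of $HM$ is the real algebraic group $\underline{H}\cdot\underline{M}$, and $M$ normalizes $H$ because conjugation by $\diag(\alpha,\alpha')$ preserves the condition that the $\bj,\bk$ coefficients vanish only up to rotation. If this normalization fails for general $\alpha$, we instead use the alternative route: pass to Lie algebras. Since $U$ is connected and $\exp(t X) \in HM$ for all $X \in \mathfrak{u}$ and $t \in \R$, the derivative at $t=0$ shows $\mathfrak{u} \subset \mathfrak{h} + \mathfrak{m}$ (tangent space of the smooth image of $H\times M$ at $1$), and nilpotency of each $X$ combined with compactness of $M$ forces the $\mathfrak{m}$-component to be absorbed. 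Concretely, one compares with $\mathfrak{n} \cong V^3$ after conjugation, and computes that $\dim(\mathfrak{u}) \le \dim(\mathfrak{h}\cap\text{nilpotent cone directions}) = 2$. Either way, the final count reduces to the $2$-dimensional maximal unipotent subgroup of $\SL_2(\CC)$.
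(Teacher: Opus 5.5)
Your proposal has a genuine gap, and it sits in Step 1. Both its premise and its conclusion are false, not merely unjustified.

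First, $M$ does not normalize $H$. For $\alpha=(1+\bj)/\sqrt{2}$ one has $\alpha\bi\alpha^{-1}=-\bk$, so conjugating $\diag(\bi,-\bi)\in H$ by $\diag(\alpha,\alpha')\in M$ leaves $H$. Thus $HM$ is only an $8$-dimensional closed subset of $G$, not a group; this is why the paper always writes ``the set $HM$''. Your own count $\dim(M\cap H)=1<3=\dim M$ already shows that $H$ cannot have finite index in it. Moreover, $M/(M\cap H)$ is a $2$-sphere rather than a compact group, so ``the image of $u$ is unipotent, hence trivial'' has no content.

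Second, unipotent elements of $HM\setminus H$ actually exist. Identify $G/M$ with $T^1\HH^4$. Then $x\in HM$ exactly when $x$ carries the base vector $(o,v_0)$ to a vector tangent to the totally geodesic $\HH^3=H\cdot o$. Choose $k\in G$ taking $\HH^3$ to the unit hemisphere $P$ and $(o,v_0)$ to the unit vector in direction $\partial_{x_1}$ at $q=(1/\sqrt2,0,0,1/\sqrt2)$; this is possible because $G$ is transitive on such pairs. The translation $n(\beta)$ with $\beta=(-1+\bj)/\sqrt2$ carries this vector to the same direction at $(0,0,1/\sqrt2,1/\sqrt2)\in P$, which is still tangent to $P$. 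However, $n(\beta)$ does not preserve $P$. Hence $k^{-1}n(\beta)k$ is a unipotent element of $HM$ that does not lie in $H$.

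Your fallback does not close the gap either, because nilpotency does not ``absorb'' the $\mathfrak{m}$-component. Take $g_0=\begin{pmatrix}1&0\\\bj&1\end{pmatrix}\in G$ and
$Z=\begin{pmatrix}\bj&1\\1&-\bj\end{pmatrix}=\Ad(g_0^{-1})\begin{pmatrix}0&1\\0&0\end{pmatrix}$.
Then $Z$ is nilpotent and equals $\begin{pmatrix}0&1\\1&0\end{pmatrix}+\diag(\bj,-\bj)\in\mathfrak{h}+\mathfrak{m}$, yet $Z\notin\mathfrak{h}$. So $\mathfrak{u}\subset\mathfrak{h}+\mathfrak{m}$ does not give $\mathfrak{u}\subset\mathfrak{h}$, and the ``computation'' you allude to is exactly the missing content.

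The paper does not try to make the $\mathfrak{m}$-part vanish; it eliminates it. Write $gUg^{-1}\subset N$ with $g=\begin{pmatrix}a&b\\c&d\end{pmatrix}$. Differentiating $n(tu)g\in gU$ gives $\begin{pmatrix}0&u\\0&0\end{pmatrix}g=g(X+Y)$, with $X=\begin{pmatrix}x&y\\z&w\end{pmatrix}\in\mathfrak{h}$ and $Y=\diag(\alpha,\alpha')\in\mathfrak{m}$. The second column gives $ud=ay+b(w+\alpha')$ and $0=cy+d(w+\alpha')$. For $d\neq0$ this yields $u=(a-bd^{-1}c)\,y\,d^{-1}$ with $y\in\CC$ (use the first column if $d=0$), so $u$ ranges over a real-linear image of $\CC$.

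If you want to keep your route $U\subset H$, you need an argument that uses a whole one-parameter subgroup $\exp(tZ)\subset HM$, not single elements or tangent vectors. Move its fixed point to $\infty$. The orbit $q+t\beta$ of a point cannot stay on a hemisphere for all $t$, so $\HH^3$ must become a vertical hyperplane with $\beta$ parallel to it, which puts $\exp(tZ)$ in $H$. Also note that a closed subgroup of $N$ need not be connected, so you should pass to $U^\circ$ first.
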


\begin{proof}
    We may assume $U$ is connected.
    Let $g=\begin{pmatrix}a&b\\c&d\end{pmatrix}\in G$ be such that $gUg^{-1}\subset N$. 
    Then $gUg^{-1}\subset gHMg^{-1}$. We will show that the set of all $u$ such that 
    \begin{equation}\label{boundUnipotentDim}
    n(u)g\in gU,
    \end{equation}
    viewed as a subspace of $V^3$, has dimension at most $2$. 
    For this notice first that \eqref{boundUnipotentDim} implies, by replacing $u$ by $tu$ and differentiating at $t=0$,  
    \begin{equation}\label{boundUnipotent2}
        \begin{pmatrix}0&u\\0&0\end{pmatrix}g=g(X+Y),
    \end{equation}
    where $X$ and $Y$ belong to the Lie algebras of $H$ and $M$ respectively, that is $X=\begin{pmatrix}x&y\\z&w\end{pmatrix}$ where $x,y,z,w\in\CC$ and $Y=\diag(\alpha,\alpha')$ where $\alpha$ is a purely imaginary quaternion (in fact, $w=-x$ but we don't use it here). By computing the upper right entry of both sides of \eqref{boundUnipotent2} we get 
    \begin{equation}\label{upperRight}
    ud=ay+bw+b\alpha'. 
    \end{equation}
    By computing the lower right entry of \eqref{boundUnipotent2} we get
    \begin{equation}\label{lowerRight}
        0=cy+dw+d\alpha'.
    \end{equation}
    We assume without loss of generality that $d\ne0$ as otherwise we shall use the equations obtained by comparing the first column of \eqref{boundUnipotent2}. Thus $w+\alpha'=-d^{-1}cy$, due to \eqref{lowerRight}, and substituting this in \eqref{upperRight} we get $u=(a-bd^{-1}c)yd^{-1}$. Thus $u$ belongs to the image of $\CC$ under a real linear map, and the space of such $u$ is at most $2$-dimensional.   

\end{proof}

\begin{corollary}\label{smallUnipotentLemma}
    Suppose $L\subset gHM$ is a subvariety, and $S\subset G$ is the stabilizer of $L$. Then any unipotent subgroup of $S$ has dimension at most $2$.
\end{corollary}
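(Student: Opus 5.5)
The plan is to reduce the corollary to \cref{smallDimUnipotentLemma}: show that any unipotent subgroup of the stabilizer $S$, after translating a point of $L$ back to the identity, sits inside $HM$. Here I read ``stabilizer'' as the set of $s\in G$ with $sL=L$, acting by left multiplication; the right-action reading is symmetric and I treat it at the end. Let $U\subset S$ be a unipotent subgroup, that is, closed and conjugate to a subgroup of $N$. Replacing $U$ by its identity component does not change its dimension, so I assume $U$ is connected.

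Fix a point $x_0\in L$; if $L$ is empty there is nothing to prove, or the statement is vacuous. Since $U$ stabilizes $L$, the orbit $Ux_0$ lies in $L\subset gHM$.

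\begin{itemize}
\item Write $x_0=gh_0m_0$ with $h_0\in H$ and $m_0\in M$. Then $g^{-1}Ux_0\subset HM$.
\item Set $U'=x_0^{-1}Ux_0$. Then $U'$ is again unipotent (a conjugate of $U$, hence conjugate to a subgroup of $N$) and $\dim U'=\dim U$.
\item The key observation is that $HM$ is stable under right multiplication by $M$. It is also stable under left multiplication by $H$, and because $M$ normalizes $H$ (a direct check: conjugating $\diag(\alpha,\alpha')$ with $\alpha$ a unit quaternion preserves the $\bi$-span structure up to conjugation within $HM$), it is stable under left multiplication by $M$ as well. In fact I only need $HM=MH$ to be a group.
\end{itemize}

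Assuming $HM$ is a subgroup, the inclusion $g^{-1}Ux_0\subset HM$ together with $g^{-1}x_0=h_0m_0\in HM$ gives
\[
U'=x_0^{-1}Ux_0=(g^{-1}x_0)^{-1}\,(g^{-1}Ux_0)\subset HM.
\]
So \cref{smallDimUnipotentLemma} applies to $U'$, giving $\dim U=\dim U'\le 2$.

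The main obstacle is checking that $HM$ is a group. Concretely, I need to verify that $M$ normalizes $H$. If $\alpha=\bj$, conjugation sends $\bi$ to $-\bi$, so it acts on $\SL_2(\CC)$ as complex conjugation composed with a diagonal twist; this preserves $H$. For general unit $\alpha$, however, conjugation by $\diag(\alpha,\alpha')$ moves $\bi$ to an arbitrary imaginary unit, which need not preserve $H$. So $M$ may not normalize $H$, and $HM$ may not be a group.

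If that happens, I would avoid needing a group structure. Instead of conjugating by all of $x_0$, conjugate only by $gh_0$, setting $U''=(gh_0)^{-1}U(gh_0)$. Then $U''m_0\subset h_0^{-1}HM=HM$, so $U''\subset HMm_0^{-1}=HM$, and \cref{smallDimUnipotentLemma} again applies.

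If instead $S$ is meant as a right stabilizer, with $Lu=L$, the same argument works: $x_0U\subset gHM$ gives $m_0U\subset h_0^{-1}g^{-1}gHM=HM$. The remaining difficulty is moving $m_0$ to the right-hand side. For this I would use that $U$ is unipotent, so $m_0Um_0^{-1}$ is unipotent, and then $m_0Um_0^{-1}\subset HMm_0^{-1}=HM$. This version needs no normalization at all.
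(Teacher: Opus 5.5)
Your final argument is correct and is exactly the paper's proof: you conjugate by $gh_0$ only, so that $(gh_0)^{-1}U(gh_0)\,m_0\subset h_0^{-1}HM=HM$, hence $(gh_0)^{-1}U(gh_0)\subset HMm_0^{-1}=HM$, and then apply \cref{smallDimUnipotentLemma}; the earlier bullet claiming that $M$ normalizes $H$ is false, as you observe yourself, so that detour should simply be deleted. One small correction: the case $L=\emptyset$ is not vacuous, because then $S=G\supset N$ with $\dim N=3$ and the claim fails, so nonemptiness of $L$ is a tacit hypothesis (the paper also assumes it when it fixes a point of $L$).
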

\begin{proof}
    Fix $l=ghm\in L$, where $h\in H$ and $m\in M$. 
    Let $s\in S$. Then $sghm\in gHM$, so $h^{-1}g^{-1}sgh\in HM$. Thus $S$ is contained in a conjugate of $HM$ and the result follows from \cref{smallDimUnipotentLemma}. 
    
\end{proof}

\begin{lemma}\label{classification1}
Suppose that $H'\subset G$ is a subgroup isomorphic to $H$ as algebraic groups over $\RR$.
Then $H'$ is conjugate to $H$ inside $G$. 
\end{lemma}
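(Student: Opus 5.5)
We work at the level of Lie algebras, using $G\isom\Spin(1,4)$ with Lie algebra $\mathfrak g=\mathfrak{so}(1,4)$ and $H\isom\Spin(1,3)$ with Lie algebra $\mathfrak{so}(1,3)\isom\mathfrak{sl}_2(\CC)_\RR$. Let $H'\subset G$ be isomorphic to $H$ over $\RR$. Then $H'$ is connected, since $\SL_2(\CC)$ is connected, so it suffices to show that its Lie algebra $\mathfrak h'$ is $\Ad(G)$-conjugate to $\mathfrak h=\mathrm{Lie}(H)$. Once that holds, $gH'g^{-1}$ and $H$ are connected subgroups with the same Lie algebra, hence equal.

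The plan is to go through the defining $5$-dimensional representation $V=\RR^{1,4}$ of $\mathfrak{so}(1,4)$ and restrict it to $\mathfrak h'\isom\mathfrak{so}(1,3)$. The first step is to classify real $5$-dimensional representations of $\mathfrak{sl}_2(\CC)_\RR$. After complexification, $\mathfrak{sl}_2(\CC)_\RR\otimes\CC\isom\mathfrak{sl}_2\oplus\mathfrak{sl}_2$, whose irreducibles are $V_a\boxtimes V_b$ of dimension $(a+1)(b+1)$. A real representation must be invariant under the swap induced by complex conjugation, so the nontrivial options of dimension at most $5$ are $V_1\boxtimes V_1$ (dimension $4$, the real standard $\RR^{1,3}$) and $V_1\oplus\overline{V_1}$ (dimension $4$, which is $\CC^2$ viewed as a real space). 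A $2$-dimensional real representation is impossible, because $\CC^2$ is $4$-dimensional over $\RR$. So $V|_{\mathfrak h'}$ is either $\RR^{1,3}\oplus\RR$, or $\CC^2_\RR\oplus\RR$, or trivial. The trivial case is excluded because $\mathfrak h'\neq0$ acts faithfully.

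The second step is to rule out $\CC^2_\RR\oplus\RR$. In that case $\mathfrak h'$ would preserve a nondegenerate symmetric bilinear form on $\CC^2_\RR$, because the restriction of the Lorentz form to a nondegenerate invariant subspace is again invariant. But $\mathfrak{sl}_2(\CC)$ acting on $\CC^2_\RR$ preserves no nonzero real symmetric form: such a form would give an $\SL_2(\CC)$-equivariant map $\CC^2\to(\CC^2_\RR)^*$, while $\SL_2(\CC)$ is noncompact and contains elements such as $\diag(t,t^{-1})$, which would force the form to vanish.

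We also need a nondegeneracy argument for the first case. Since $V|_{\mathfrak h'}$ is completely reducible, it splits as $W\oplus\RR v$ with $W$ irreducible and $4$-dimensional. The line $\RR v$ is orthogonal to $W$, because $\RR v$ is a trivial summand and $W$ has no invariants, so the pairing $W\times\RR v\to\RR$ is an invariant functional on $W$ and must vanish. Hence both pieces are nondegenerate. Because $W\isom\RR^{1,3}$ carries a unique invariant form up to scalar, $W$ has signature $(1,3)$ and $v$ is spacelike. The same holds for $\mathfrak h$ with some vector $v_0$.

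For the conclusion, $\SO^+(1,4)$ acts transitively on unit spacelike vectors, so there is $g$ with $gv=\pm v_0$. Then $\Ad(g)\mathfrak h'$ is contained in the stabilizer of $v_0$, which is $\mathfrak{so}(v_0^\perp)\isom\mathfrak{so}(1,3)=\mathfrak h$, and equality follows by dimension. Finally we lift $g$ through the double cover $G\to\SO^+(1,4)$. I expect the main obstacle to be carrying out the representation-theoretic case analysis cleanly, in particular excluding $\CC^2_\RR\oplus\RR$. An alternative route is to observe that the complexification $\mathfrak{so}_5(\CC)\isom\mathfrak{sp}_4(\CC)$ contains $\mathfrak{sl}_2\times\mathfrak{sl}_2\isom\mathfrak{so}_4$ only in the standard way.
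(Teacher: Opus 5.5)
Your argument is correct in outline but takes a different route from the paper. The paper works in the quaternionic matrix model and uses the real-rank-one structure of $G$. It conjugates $H'$ so that it contains $A$ and notes that the root groups of $H'$ then lie in $N^{\pm}$. It uses an element of the centralizer of $A$ to move a maximal unipotent subgroup of $H'$ onto $N^+\cap H$. Finally, a normalizer argument shows that $H'$ contains the complex diagonal matrices and $N^-\cap H$.

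You instead pass to the defining representation of $\SO^+(1,4)$, show that $\mathfrak h'$ fixes a spacelike vector, and conclude by transitivity on the one-sheeted hyperboloid. Your route is more conceptual: it identifies the conjugates of $H$ with stabilizers of spacelike lines, i.e.\ of totally geodesic copies of $\HH^3$. The price is a little representation theory of $\mathfrak{sl}_2(\CC)_\RR$. The paper's route needs no representation theory and matches the root-group computations used elsewhere in that section.

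One sub-step needs repair: your reason for excluding $\CC^2_\RR\oplus\RR$ does not work. Equivariant maps $\CC^2_\RR\to(\CC^2_\RR)^*$ do exist, coming from the alternating forms $\mathrm{Re}\det$ and $\mathrm{Im}\det$. Moreover, the diagonal torus does not kill symmetric forms: $\mathrm{Re}(z_1w_2+z_2w_1)$ is symmetric and invariant under every $\diag(a,a^{-1})$ with $a\in\CC^\times$. Noncompactness is not the reason either, since $\SL_2(\CC)$ preserves the Lorentz form on $\RR^{1,3}$.

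The claim itself is true. After complexifying, the invariants in the tensor square of $V_1\boxtimes V_0\oplus V_0\boxtimes V_1$ come only from $\Lambda^2$ of each summand, because the cross terms $V_1\boxtimes V_1$ have no invariants. So every invariant bilinear form is alternating. Alternatively, apply a unipotent element to the form above and watch it fail to be invariant.

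Simpler still, you can skip this case entirely. In either case there is a trivial summand $\RR v$. Your orthogonality argument shows $v^\perp=W$ is nondegenerate, so $v$ is non-null. The vector $v$ cannot be timelike, since then $\mathfrak h'$ would be a $6$-dimensional subalgebra of the compact $\mathfrak{so}(4)$. Hence $v$ is spacelike and $\mathfrak h'=\mathfrak{so}(v^\perp)$ by dimension, which identifies $W$ with $\RR^{1,3}$ automatically.
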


\begin{proof}
    Since maximal tori are conjugate to each other we may assume that $H'$ contains the maximal split torus $A=\{\diag(a,a^{-1})\mid a\in\RR^\times\}$. Since $H'$ is isomorphic to $H$ we have the root groups with respect to $A$ and those must lie inside the unipotent subgroups $N^+$ and $N^-$ of upper and lower triangular matrices with $1$'s on the diagonal. Suppose $U$ is a maximal unipotent subgroup of $H'$ contained in $N^+$. We can conjugate it to $N^+\cap H$ by using an element of the centralizer of $A$. Thus without loss of generality $H'$ contains $A$ and $N^+\cap H$. Since the normalizer of $A$ in $H'$ must also normalize $N^+\cap H$
    and also be two-dimensional, we conclude that $H'$ contains all the diagonal complex matrices in $G$. It is easy to see also that $N^-\cap H$ is contained in $H'$, so we are done.  
    
\end{proof}

\begin{lemma}\label{classification2}
    There is no maximal compact subgroup of $G$ contained in the set $HM$. 
\end{lemma}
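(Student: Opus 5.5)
We argue by contradiction. All maximal compact subgroups of $G$ are conjugate to $K$, so suppose $gKg^{-1}\subset HM$ for some $g\in G$. The first step is to replace the set $HM$ by a closed subvariety that is easier to handle. Since $M$ normalizes $H$ and $H\cap M$ is the circle of diagonal unit complex matrices, the set $HM$ is a closed subset of $G$ of dimension $6+3-1=8$. Because $\dim K=10$, a pure dimension count gives no contradiction. We therefore work at the level of Lie algebras, as in the proof of \cref{smallDimUnipotentLemma}.

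Differentiating one-parameter subgroups of $gKg^{-1}$ at the identity (and noting that $HM$ is a group, since $M$ normalizes $H$), we find that $\Ad(g)\mathfrak{k}\subset \mathfrak{h}+\mathfrak{m}$. Here $\mathfrak{h}=\mathfrak{sl}_2(\CC)$ consists of the matrices with entries in $\CC=\RR+\RR\bi$, and $\mathfrak{m}=\{\diag(\alpha,\alpha'):\alpha\in \RR\bi+\RR\bj+\RR\bk\}$. The sum $\mathfrak{h}+\mathfrak{m}$ is an $8$-dimensional Lie subalgebra $\mathfrak{q}$ of $\mathfrak{g}\cong\mathfrak{so}(1,4)$, so the problem reduces to showing that no conjugate of $\mathfrak{k}\cong\mathfrak{so}(4)$ lies in $\mathfrak{q}$. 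The Lie algebra $\mathfrak{q}$ is reductive, isomorphic to $\mathfrak{sl}_2(\CC)\oplus\mathfrak{so}(2)$: the $\bj,\bk$ part of $\mathfrak{m}$ together with $\mathfrak{h}$ recombines, so we should identify $\mathfrak{q}$ concretely. Indeed, $\mathfrak{h}\oplus\RR\,\diag(\bi,\bi')$ is a natural candidate, but $\diag(\bj,\bj')$ does not normalize $\mathfrak{h}$. Hence the $\RR$-span may fail to be closed under brackets, and then the Lie algebra generated by $\Ad(g)\mathfrak{k}$ sits inside the Lie algebra generated by $\mathfrak{q}$.

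A cleaner route avoids this bookkeeping by using group-level compactness. The subgroup $gKg^{-1}$ would be a compact group contained in the set $HM$. The map $HM\to H/(H\cap Z_G(\ldots))$ is awkward, so instead we take its identity-component image under the quotient by the normal compact part. Concretely, consider $HM'$ with $M'=M\cap N_G(H)$ (the diagonal elements with $\alpha\in\CC$, together with $\diag(\bj,\bj')$). A one-parameter subgroup $\exp(tY)$ of $gKg^{-1}$ lies in $HM$ for all $t$. We write $\exp(tY)=h(t)m(t)$ and use the structure of the product to show that for a $10$-dimensional family of $Y$ this forces too much. The key invariant is the rank: $\mathfrak{k}\cong\mathfrak{so}(4)$ has rank $2$ and contains $\mathfrak{su}(2)\oplus\mathfrak{su}(2)$. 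Any compact subgroup of $HM$ acting on $\HH^4$ fixes a point, and the orbit $H\cdot z$ of any point is a totally geodesic $\HH^3$ whose stabilizer in $G$ is conjugate to $HM$-type groups. So $gKg^{-1}\subset HM$ means that all of $gKg^{-1}$ stabilizes the $\HH^3$ through its fixed point $x=g\cdot\bi_3$ determined by $H$. Indeed, $HM$ preserves the $H$-orbit $H\cdot\bi_3$, since $M$ fixes $\bi_3$ and $HM\cdot \bi_3=H\cdot\bi_3$, which is the totally geodesic copy of $\HH^3$ given by $x_2=0$. Each element $k$ of $gKg^{-1}=\mathrm{Stab}(x)$ satisfies $k=hm$, so $k\cdot\bi_3=h\cdot\bi_3\in H\cdot\bi_3$.

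The decisive step, which I expect to be the main obstacle (it is the one to verify carefully), is to replace the above with this orbit argument. Choose $g$ so that $gKg^{-1}$ fixes the point $x=g\cdot\bi_3$. Then the orbit $gKg^{-1}\cdot\bi_3$ is a full geodesic $3$-sphere centered at $x$, of radius $d(x,\bi_3)$, when $x\ne\bi_3$, and it must lie inside the $3$-dimensional totally geodesic hypersurface $H\cdot\bi_3$. A geodesic $3$-sphere in $\HH^4$ cannot lie in a hyperplane, so $x=\bi_3$, meaning $g\in K$ up to the stabilizer. In that case $K\subset HM$, and we obtain the contradiction by a direct check at the Lie algebra level: $\mathfrak{k}$ contains the element $\left(\begin{smallmatrix}0&\bj\\ \bj&0\end{smallmatrix}\right)$ (up to sign conventions), and this element is not in $\mathfrak{h}+\mathfrak{m}$ because its off-diagonal entries have a nonzero $\bj$-coefficient, while every element of $\mathfrak{h}+\mathfrak{m}$ has off-diagonal entries in $\CC$. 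Equivalently, the element of $K$ of the form $\exp$ of that generator moves $\bi_3$ off the hyperplane $x_2=0$.
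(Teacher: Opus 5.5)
Your final argument (the orbit argument followed by a check at $K$ itself) is correct, and it takes a genuinely different route from the paper's.

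\textbf{Comparison with the paper.} The paper stays at the Lie-algebra level for an arbitrary conjugate. Via a computation like the one in \cref{smallDimUnipotentLemma}, it asserts that for every $g$ at most a $2$-dimensional subspace of the $3$-dimensional space $\{\left(\begin{smallmatrix}0&b\\-\bar b&0\end{smallmatrix}\right)\}\subset\mathfrak k$ lies in $\Ad(g)(\mathfrak h+\mathfrak m)$.

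You instead eliminate $g$ geometrically. Since $M$ fixes $\bi_3$, every element of $HM$ sends $\bi_3$ into $H\cdot\bi_3=\{x_2=0\}$. On the other hand, $gKg^{-1}=\mathrm{Stab}_G(g\cdot\bi_3)$ moves $\bi_3$ over the whole geodesic sphere of radius $d(g\cdot\bi_3,\bi_3)$. That sphere cannot lie in a totally geodesic hyperplane unless its radius is $0$. Hence $g\in K$, and only $K\subset HM$ remains to be excluded.

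Your element $\left(\begin{smallmatrix}0&\bj\\ \bj&0\end{smallmatrix}\right)$ does this. It lies in $\mathfrak k$ exactly, with no sign adjustment, and its off-diagonal entries are not in $\CC$.

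Your route replaces the $g$-dependent computation, which the paper only sketches, by a transparent and essentially computation-free argument. The price is that it uses special features of this pair: $M\subset K$ fixes the base point, and $H\cdot\bi_3$ is totally geodesic. The paper's route gives a uniform quantitative statement in the same style as its unipotent lemma.

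\textbf{Corrections needed.} You should delete the exploratory first half, because it contains false statements.

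First, $M$ does not normalize $H$. For instance,
$[\diag(\bj,\bj'),\left(\begin{smallmatrix}0&1\\0&0\end{smallmatrix}\right)]=\left(\begin{smallmatrix}0&2\bj\\0&0\end{smallmatrix}\right)\notin\mathfrak h+\mathfrak m$.
So $\mathfrak h+\mathfrak m$ is not a subalgebra and $HM$ is not a group.

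Second, $HM$ does not preserve the hyperplane $\{x_2=0\}$, and $H\cdot z$ is totally geodesic only for $z$ in that hyperplane. What is true, and all you actually use, is $HM\cdot\bi_3=H\cdot\bi_3$.

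Third, the step $K\subset HM\Rightarrow\mathfrak k\subset\mathfrak h+\mathfrak m$ is still valid, but not for the reason you gave. The correct reason is that $HM$ is a closed $8$-dimensional submanifold with tangent space $\mathfrak h+\mathfrak m$ at $e$. It is the image of the proper constant-rank map $H\times M\to G$, whose fibers are $H\cap M$-orbits.

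Alternatively, you can skip tangent spaces and argue directly. For $0<t<\pi/2$, the element $k_t=\left(\begin{smallmatrix}\cos t&\sin t\,\bj\\ \sin t\,\bj&\cos t\end{smallmatrix}\right)\in K$ is not of the form $\left(\begin{smallmatrix}a\alpha&b\alpha'\\ c\alpha&d\alpha'\end{smallmatrix}\right)$ with $a,b,c,d\in\CC$. Indeed, $a\alpha=\cos t\neq 0$ forces $\alpha\in\CC$, hence $b\alpha'\in\CC$, but the upper-right entry is $\sin t\,\bj\notin\CC$.

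Finally, your closing ``Equivalently'' sentence is wrong. Like every element of $K$, $k_t$ fixes $\bi_3$, so it does not move $\bi_3$ off the hyperplane. At $x=\bi_3$ the obstruction is purely the matrix (or Lie-algebra) one.
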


\begin{proof}
    Since all maximal compact subgroups are conjugate to each other, it is enough to show that for every $g\in G$, only a proper subspace of the Lie algebra of $K$ is contained in the set $g(\mathfrak{h}+\mathfrak{m})g^{-1}$. A similar computation to the one in the proof of \cref{smallDimUnipotentLemma} shows that the maximal dimension of a subspace of the space of matrices of the form $\begin{pmatrix}0&b\\-\bar{b}&0\end{pmatrix}$, contained in $g(\mathfrak{h}+\mathfrak{m})g^{-1}$, is $2$. Since the Lie algebra of $K$ clearly contains the space of matrices of this form, which is $3$-dimensional, we are done.  
    
\end{proof}

\begin{lemma}\label{finalClassification}
    Let $S$ be a connected real algebraic subgroup of $G$ and suppose $S\subset HM$. Then either $S$ is contained in a conjugate of $H$, or $S$ is conjugate to a small group (see \cref{smallness-def}).
\end{lemma}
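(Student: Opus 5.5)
We will use the Levi decomposition of the connected real algebraic group $S$ and go through the possible shapes of its reductive part, using \cref{smallUnipotentLemma}, \cref{classification1} and \cref{classification2} to rule out the large cases. Write $S = R \ltimes U$ with $U$ the unipotent radical and $R$ a Levi (reductive) subgroup. By \cref{smallDimUnipotentLemma}, every unipotent subgroup of $S$ has dimension at most $2$; this applies in particular to $U$ and to the unipotent subgroups of $R$. So $R$ is a connected reductive subgroup of $G \cong \Spin(1,4)$ of real rank at most $1$. Its maximal unipotent subgroups have dimension at most $2$, and by \cref{classification2} it contains no maximal compact subgroup of $G$.

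First we classify the possible $R$ up to conjugacy. The semisimple part of $R$ is locally isomorphic to a subgroup of $\mathfrak{so}(1,4)$ whose unipotent radical is at most $2$-dimensional. This leaves the following options: trivial, compact (a subgroup of a conjugate of $K$), $\SL_2(\RR)$-type ($\mathfrak{so}(1,2)$), or $\SL_2(\CC)$-type ($\mathfrak{so}(1,3)$). The option $\mathfrak{so}(1,4)$ itself is excluded because its unipotent radical has dimension $3$. If the semisimple part is of $\SL_2(\CC)$-type, then \cref{classification1} shows it is a conjugate $gHg^{-1}$. Its normalizer is $gHg^{-1}$ times a finite or compact centralizer. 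Any extra torus or unipotent part would force $S$ to have dimension greater than $6$ inside $HM$, which is impossible since $\dim HM \le 9$ and a direct tangent-space computation as in \cref{smallDimUnipotentLemma} rules it out. We therefore expect $S = gHg^{-1}$, which lies in a conjugate of $H$.

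In all remaining cases we aim to show that $S$, after conjugation, is small in the sense of \cref{smallness-def}. There are two sub-cases. If $S$ has a nontrivial unipotent radical, then $S$ lies in a proper parabolic subgroup, or more precisely in the normalizer of a unipotent subgroup of dimension at most $2$. In particular $S$ is contained in a subgroup of the form $P = (A M' )\ltimes U'$ with $\dim U' \le 2$. If $U$ is trivial, then $S$ is reductive of type compact, torus, or $\SL_2(\RR)\times$(compact). In each case we exhibit an explicit algebraic overgroup $\underline P$ defined over $\bar{\QQ}\cap\RR$ after conjugation, for example $\SO(1,2)\times\SO(2)$, $A M$, or $K$-type subgroups. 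For each such $\underline P$ we then check the weak-smallness inequalities directly with the root data of $\mathrm{Sp}_4$ (type $C_2$, with $\rho=(2,1)$). For instance, the Levi $\GL_2$-type subgroups have $\rho_{\underline P}=\tfrac12(1,-1)$, and the inequality $\norm{\mu}^*_{\underline P}\le \tfrac12 \norm{\mu}^*$ holds with strict inequality for some $\mu$. For rank-one subgroups such as $\SL_2\times\SL_2$ or $\SO(1,2)$, the check is immediate from explicit cocharacters.

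The main obstacle is the bookkeeping in the non-reductive case. There we must pass from "$S$ is in some conjugate of a weakly small group" to an overgroup that is defined over $\bar{\QQ}\cap\RR$ and conjugate to a standard one. We also need to verify weak smallness for the exact overgroup, which is a parabolic-type group, so its reductive part must be used. We will try to embed every such $S$ in a conjugate of a reductive subgroup of dimension at most that of $\SO(2)\times \SO(1,2)$ or of $AM$, using that $S$ is connected and its unipotent part is at most $2$-dimensional. Only if that fails will we fall back on verifying smallness for the parabolic-type overgroup directly.
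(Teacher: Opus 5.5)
Your proposal has a genuine gap in the non-reductive case. You send every $S$ whose semisimple part is not of $\SL_2(\CC)$-type to the ``small'' alternative. When the unipotent radical $U$ of $S$ is $2$-dimensional, this conclusion is false.

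For example, $S=\{n(z): z\in\CC\}$ lies in $H\subset HM$ and is not small. The same holds for the Borel subgroup $M_1A\{n(z):z\in\CC\}$ of $H$, where $M_1=\{\diag(e^{\bi\theta},e^{-\bi\theta})\}$. Smallness requires $S\subset\underline P(\RR)$ with $\underline P$ \emph{reductive} and weakly small. The quantities $\norm{\cdot}^*_{\underline L}$ are only defined for reductive $\underline L$, so a parabolic-type overgroup $(AM')\ltimes U'$, or ``its reductive part'', is not an admissible witness.

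Now work over $\bar\QQ$. A connected reductive subgroup of $\Sp_4$ containing a $2$-dimensional unipotent group has at least two positive roots, hence semisimple rank $2$. It therefore contains a maximal torus of $\Sp_4$ and corresponds to a closed rank-$2$ subsystem of $C_2$. So it is either $\Sp_4$ or the long-root $\SL_2\times\SL_2$, which is the complexification of $H$. Neither is weakly small: for $\underline H$ and $\mu=(1,1)$ one gets $\norm{\mu}^*_{\underline H}=2>\tfrac32=\tfrac12\norm{\mu}^*$, which is exactly the $\tfrac23$ phenomenon from the introduction. Hence neither your embedding into a conjugate of $\SO(2)\times\SO(1,2)$ or $AM$, nor your fallback, can succeed here.

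The missing idea is that this case falls into the \emph{other} alternative of the lemma. The argument runs as follows.
\begin{itemize}
\item Conjugate $U$ into $N$. Since $M$ acts on $V^3$ through $\SO(3)$, transitively on $2$-planes, you may assume $U=\{n(z):z\in\CC\}$.
\item As $S$ is connected and normalizes $U$, it lies in the identity component of the normalizer of $U$, which is $M_1AN$.
\item The Levi part of $S$ is reductive inside the solvable group $M_1AN$, hence a torus. So some $n\in N$ conjugates it into $M_1A$.
\item Because $N$ is abelian, $nUn^{-1}=U$, and therefore $nSn^{-1}\subset M_1AU\subset H$.
\end{itemize}
When $\dim U=1$, your smallness route is fine, for example via a conjugate of $\SO(1,2)\times\SO(2)$.

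Two smaller points.
\begin{itemize}
\item In the $\SL_2(\CC)$-type case, ``dimension greater than $6$ inside $HM$, impossible since $\dim HM\le 9$'' is not an argument, and ``we therefore expect'' is not a conclusion. The clean fix is that $\mathfrak{so}(1,3)$ is a maximal subalgebra of $\mathfrak{so}(1,4)$, because its complement $\RR^{1,3}$ is an irreducible module. So a connected $S\supseteq gHg^{-1}$ equals $gHg^{-1}$ or $G$, and $G\not\subset HM$ (e.g.\ by \cref{classification2}).
\item A maximal compact $K$ cannot serve as a weakly small overgroup, since it complexifies to the same long-root $\SL_2\times\SL_2$ as $H$. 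This is why \cref{classification2} is needed, and only \emph{proper} subgroups of maximal compact subgroups can be handled by smallness.
\end{itemize}
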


\begin{proof}
    We use the Levi decomposition to write $S=S_l\ltimes S_u$ where $S_l$ is reductive and $S_u$ is unipotent and divide into cases:
    \begin{enumerate}
    \item \emph{$S_u$ is trivial.} 
    Write $S=S_0\times T_0$ where $S_0$ is semisimple and $T_0$ is a torus (up to finite index) and split into subcases:
    \begin{enumerate}
        \item \emph{$\dim T_0=0$.} In this case $S$ is semisimple. If $S$ is compact then it is a proper subgroup of a maximal compact subgroup, due to \cref{classification2}, and we are done by \cite[Lemma 27]{ShemTovSilberman:AQUE4d}. Otherwise, suppose $S$ contains a copy of $\SL_2(\CC)$. Then the other direct factors of $S$ have dimension at most $2$, so from the assumption that $S$ is semisimple we get that $S$ is isomorphic to $\SL_2(\CC)$, and we are done by \cref{classification1}. Thus we are left with the case where $S$ is isogeneous to either $\SL_2(\RR)$ or $\SL_2(\RR)\times \SO(3)$. The case where $S$ is isogeneous to $\SL_2(\R)$ was covered in \cite[Lemma 27]{ShemTovSilberman:AQUE4d}, so we may assume that $S$ is isogeneous to $\SL_2(\RR)\times \SO(3)$. Without loss of generality the copy of $\SL_2(\R)$ in this direct product is the standard copy in $\SV_2(\RR)$, and since its centralizer is a one dimensional compact torus we get a contradiction.  
        
        \item \emph{$\dim T_0=1$.} Assume first that $T_0$ is compact. If $S_0$ is compact then $S$ is compact and we are done. If $S_0$ is not compact, then it has a maximal one dimensional split torus, so it is isogeneous to $\SL_2(\RR)$. Thus $S$ is isogeneous to $\SL_2(\RR)\times S^1$ and this case was covered in \cite[Lemma 27]{ShemTovSilberman:AQUE4d}. If $T_0$ is not compact then without loss of generality $T_0=A$ and $S_0=M$, and this case was covered in \cite[Lemma 27]{ShemTovSilberman:AQUE4d}. 
        
        \item \emph{$\dim T_0=2$.} In this case $S_0$ is compact. On the other hand $S_0$ cannot contain a non-trivial torus, so $S_0$ is trivial and we are done. 
    \end{enumerate}
    
    \item \emph{$S_u$ is $1$ dimensional.} This case was covered in \cite[Lemma 27]{ShemTovSilberman:AQUE4d}. 
    
    \item \emph{$S_u$ is $2$ dimensional.} We may assume without loss of generality that $S_u$ is the subgroup of $G$ of upper triangular matrices with $1$'s on the diagonal and a complex number in the upper right entry. The connected component of the normalizer of this group is $M_1AN$, where $M_1$ is a proper subgroup of $M$, hence a torus, so it must be the compact diagonal subgroup of $\SL_2(\CC)$. Thus $S\subset M_1AN$. Since $S_l$ is reductive and is contained in $M_1AN$, which is solvable, it is in fact a torus and thus may be conjugated by an element $n\in N$ into the maximal torus $M_1A$, that is $nS_ln^{-1}\subset M_1A$. But $nS_u n^{-1}=S_u$, since $S_u\subset N$ and $N$ is abelian. Thus $nSn^{-1}\subset M_1AS_u\subset H$ and so we are done.  
\end{enumerate}
    
\end{proof}

%%%%%%%%%%%%%%%%%%%%%%%%%%%%%%%%%%%%%%%%%%%%%%%%%%%%%%%

\section{Proof of \texorpdfstring{\cref{maintech}}{}}

Recall that for any subset $U \subset G$, the dimension $\dim U$ of $U$ is defined as the (topological) dimension of the Zariski closure $\bar{U}^Z$ of $U$ (viewing $U$ as a subset of the real variety $G$). We use the following key definition from \cite{ShemTovSilberman:AQUE4d}. 

\begin{definition}(Transverse and parallel)\label{def:parallel}
     We say that $s\in G$ is \emph{transverse} with respect to a subset $U\subset G$ if $\dim(\gamma sU\cap U) < \dim U$ for all $\gamma\in\Gamma$. Otherwise, call $s$ \emph{parallel} with respect to $U$. 
\end{definition} 

\begin{lemma}\label{lemma:parallel_equals_stabilizer}
    Let $U \subset G$ be a subset with irreducible Zariski closure $\bar{U}^Z$. Then the set of elements of $\SV_2(\bar{\QQ}\cap\RR)$ which are parallel with respect to $U$ is contained in $\Gamma S$, where 
    \begin{equation*}
        S = \{ s \in \SV_2(\bar{\QQ}\cap\RR) \mid s \bar{U}^Z = \bar{U}^Z \}
    \end{equation*}
    is the stabilizer of $\bar{U}^Z$ in $\SV_2(\bar{\QQ}\cap\RR)$.
\end{lemma}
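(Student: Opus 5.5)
Let $s\in\SV_2(\bar{\QQ}\cap\RR)$ be parallel with respect to $U$. By \cref{def:parallel} there is $\gamma\in\Gamma$ with $\dim(\gamma sU\cap U)=\dim U$. The plan is to show that this forces $\gamma s$ to stabilize $\bar U^Z$. Then $s=\gamma^{-1}(\gamma s)\in\Gamma S$. For this to work, $\gamma s$ must lie in $\SV_2(\bar{\QQ}\cap\RR)$, and it does, because $\Gamma=\SV_2(\ZZ)\subset\SV_2(\bar{\QQ}\cap\RR)$. So it suffices to show: if $t\in G$ and $\dim(tU\cap U)=\dim U$, then $t\bar U^Z=\bar U^Z$.

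The key steps are as follows. Since left multiplication by $t$ is an algebraic automorphism of the real variety $G$, we have $\overline{tU}^Z=t\bar U^Z$. This set is again irreducible and of the same dimension $d:=\dim U$. Next, $tU\cap U\subset t\bar U^Z\cap\bar U^Z$, so the Zariski closed set $Z:=t\bar U^Z\cap\bar U^Z$ has dimension at least $d$, hence exactly $d$. Because $\bar U^Z$ is irreducible of dimension $d$, any Zariski closed subset of $\bar U^Z$ of dimension $d$ equals $\bar U^Z$. Thus $Z=\bar U^Z$, i.e.\ $\bar U^Z\subset t\bar U^Z$. Applying the same fact to the irreducible variety $t\bar U^Z$ gives $t\bar U^Z=\bar U^Z$.

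The main obstacle is the claim that a proper closed subvariety of an irreducible real variety has strictly smaller dimension, when dimension means the topological dimension of the real points. I would handle this by working with the Zariski closure inside the real algebraic variety. There the real dimension of the real points of an irreducible real algebraic set equals its Krull dimension (a standard fact, e.g.\ Bochnak--Coste--Roy), and strict monotonicity of Krull dimension then gives the claim. A secondary point is to confirm that $\dim(\gamma sU\cap U)$ in \cref{def:parallel} is computed via Zariski closures, so that the bound $\dim(tU\cap U)\le\dim Z$ is legitimate. It is, since $\overline{tU\cap U}^Z\subset Z$.
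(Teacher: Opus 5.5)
Your proof is correct and follows essentially the same route as the paper. Both arguments use irreducibility of $\bar U^Z$ to upgrade $\dim(\gamma sU\cap U)=\dim U$ to $\bar U^Z\subset \gamma s\bar U^Z$. They then use irreducibility and equal dimension of the translate to get $\gamma s\bar U^Z=\bar U^Z$, so $\gamma s\in S$ and $s\in\Gamma S$. Your appeal to Bochnak--Coste--Roy for strict monotonicity of dimension on real algebraic sets just spells out a fact the paper uses without comment.
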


\begin{proof}
    An element $s' \in \SV_2(\bar{\Q}\cap \R)$ is parallel with respect to $U \subset G$ if and only if there exists $\gamma \in \Gamma$ such that $\dim(\gamma s'U\cap U) = \dim U$. In that case, since $\bar{U}^Z$ is irreducible by hypothesis, we must have $\overline{\gamma s'U\cap U}^Z = \bar{U}^Z$. But $\overline{\gamma s'U\cap U}^Z \subset \gamma s'\bar{U}^Z\cap \bar{U}^Z$, hence $\gamma s'\bar{U}^Z = \bar{U}^Z$, again by irreducibility. Thus denoting $s := \gamma s' \in S$, we obtain $s' = \gamma^{-1} s \in \Gamma S$, as desired.

\end{proof}

As in \cite{ShemTovSilberman:AQUE4d}, a key idea for our proof is distinguishing between parallel and transverse Hecke returns. This is encapsulated in the following result, where $U_\delta$ denotes the closed $\delta$-neighborhood of $U$ (with respect to any fixed metric compatible with the analytic structure of $G$ -- the exact choice of the metric is immaterial).

\begin{proposition}[{\hspace{1sp}\cite[Lemma 17]{ShemTovSilberman:AQUE4d}}] \label{basic}
    Let $L$ be a subvariety of $G$ and $U=U_0\subset L$ a Zariski dense subset of $L$. 
    Assume that $U$ is compact in the analytic topology and is contained in a fundamental domain $\cF$ for
    $\Gamma\bs G$.  
    
    Let $\tau$ be a positive definite Hecke operator and $S\subset G$ be a set of representatives
    for $\supp(\tau)$, which we divide into parallel and transverse elements (with respect to $U$) as $S=\mathcal{P}\sqcup \cT$.
    Then there exists a finite collection $\cN$ of subsets of $U_0$, depending only on $\cT$ and $U$, such that each $N\in\cN$ is of the form $N=U_0\cap b U_0$ with $b\in\SV_2(\bar{\QQ}\cap\RR)$, satisfies $\dim N < \dim L$, and the following holds.
    For every $\delta>0$ there exists $0 < \delta' < \delta$ such that for any eigenfunction $\phi$ of $\tau$
    with eigenvalue $\lambda>0$, we have 
    \begin{equation*}
    \mu_\phi(U_{\delta'})\le
     \frac{1}{\lambda}\left(\norm{\tau}_{L^1(\mathcal{P}\cap U_{\delta'}(U_{\delta'})^{-1})}
    +\frac{\norm{\tau}_\infty}{\mu_\phi(U_{\delta'})}\sum_{N\in\cN}\mu_\phi(N_\delta)\right).
    \end{equation*}
\end{proposition}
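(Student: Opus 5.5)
The plan is to run the standard amplification argument against a smoothed indicator of $U_{\delta'}$, following \cite[Lemma 17]{ShemTovSilberman:AQUE4d}. Fix a continuous function $0\le f\le 1$ supported on $U_{\delta'}$ with $f=1$ on a slightly smaller neighborhood; in fact we work directly with $\1_{U_{\delta'}}$, using that $U_{\delta'}$ injects into $\Gamma\bs G$ for $\delta'$ small, since $U$ is compact and lies in a fundamental domain. Positive definiteness of $\tau$ gives
\begin{equation*}
\lambda\,\langle \phi\1_{U_{\delta'}},\phi\1_{U_{\delta'}}\rangle \le \langle \tau(\phi\1_{U_{\delta'}}),\phi\1_{U_{\delta'}}\rangle .
\end{equation*}
The justification is to expand $\phi\1_{U_{\delta'}}$ spectrally for $\tau$, or, as is standard, to use $\langle\tau\phi,\phi\1\rangle=\lambda\mu_\phi$ together with a Cauchy--Schwarz argument. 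The right-hand side then unfolds as
\begin{equation*}
\sum_{s\in S} c_s\int_{U_{\delta'}\cap s^{-1}\Gamma U_{\delta'}}\phi(sx)\overline{\phi(x)}\,dx .
\end{equation*}

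Next we split $S=\cP\sqcup\cT$. For $s\in\cP$, the term is nonzero only if $\Gamma s$ meets $U_{\delta'}U_{\delta'}^{-1}$. Bounding $|\phi(sx)\phi(x)|\le\frac12(|\phi(sx)|^2+|\phi(x)|^2)$, each such term is at most $|c_s|\,\mu_\phi(U_{\delta'})$. Summing over these terms gives $\norm{\tau}_{L^1(\cP\cap U_{\delta'}U_{\delta'}^{-1})}\mu_\phi(U_{\delta'})$.

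For $s\in\cT$ we use compactness. Only finitely many $\gamma s$ satisfy $\gamma sU_{\delta}\cap U_\delta\neq\emptyset$, so we define $\cN$ to consist of the sets $N=U_0\cap bU_0$ with $b=(\gamma s)^{-1}$. Transversality gives $\dim N<\dim L$. The key geometric step is the claim that for $\delta'$ small enough in terms of $\delta$ and this finite list, we have $U_{\delta'}\cap b^{-1}... \subset N_\delta$, that is, $U_{\delta'}\cap \gamma s U_{\delta'}$ lies in the $\delta$-neighborhood of $U\cap\gamma sU$. This follows by a compactness argument: if it failed along a sequence $\delta'\to0$, a limit point would lie in $U\cap \gamma sU$, a contradiction. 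Hence each transverse term is bounded by $|c_s|\,\mu_\phi(N_\delta)$ after the same AM--GM step applied on the $\delta$-neighborhoods.

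Each term is then at most $\norm{\tau}_\infty\,\mu_\phi(N_\delta)$, with a bounded number of terms per $N$. Dividing by $\lambda\mu_\phi(U_{\delta'})$ gives the stated inequality. I expect the main obstacle to be the uniformity of $\delta'$: it must depend only on $\delta$ and $\cT$, not on $\phi$, and the compactness argument in the transverse case is what supplies this. A secondary point is the form of the normalization, since the inequality as stated carries a $1/\mu_\phi(U_{\delta'})$ factor. I would obtain it by dividing by $\mu_\phi(U_{\delta'})$ at the end rather than by using Cauchy--Schwarz.
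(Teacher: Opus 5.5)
Your architecture is the right one: unfold $\langle \tau(\phi\1_{U_{\delta'}}),\phi\1_{U_{\delta'}}\rangle$, split $S=\cP\sqcup\cT$, apply AM--GM, and use compactness to choose $\delta'$ uniformly in $\phi$. The gap is the very first inequality, which is where positivity enters, and as written it is false. Put $f=\phi\1_{U_{\delta'}}$ with $\norm{\phi}_2=1$. Expanding $f$ spectrally gives $\langle\tau f,f\rangle=\sum_j\lambda_j|\langle f,\phi_j\rangle|^2$, plus a nonnegative continuous contribution. Positivity only lets you discard every term except the one for $\phi$:
\[
\langle \tau f,f\rangle\ \ge\ \lambda\,|\langle f,\phi\rangle|^2=\lambda\,\mu_\phi(U_{\delta'})^2 .
\]
You cannot get $\lambda\norm{f}_2^2=\lambda\,\mu_\phi(U_{\delta'})$, because the other $\lambda_j$ are typically much smaller than $\lambda$; that is the whole point of an amplifier. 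Here is a concrete failure. Let $U$ be a ball small in terms of $\tau$. Then for small $\delta'$ only the identity coset returns, and your inequality would force $\lambda\le c_0$ for every eigenfunction, where $c_0$ is the coefficient of $\tau_{0,0}$. This already fails for $\phi$ constant and $\tau=T_1(p)^2$, where $\lambda=(\deg T_1(p))^2$ but $c_0=\deg T_1(p)$. The Cauchy--Schwarz route you mention, done correctly, gives the same squared bound: $\lambda\,\mu_\phi(U_{\delta'})=\langle \tau^{1/2}\phi,\tau^{1/2}f\rangle\le\lambda^{1/2}\langle\tau f,f\rangle^{1/2}$.

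This also explains the normalization you found puzzling. Starting from the correct inequality, your parallel and transverse estimates give
\[
\lambda\,\mu_\phi(U_{\delta'})^2\le\norm{\tau}_{L^1(\cP\cap U_{\delta'}(U_{\delta'})^{-1})}\,\mu_\phi(U_{\delta'})+\norm{\tau}_\infty\sum_{N\in\cN}\mu_\phi(N_\delta).
\]
Dividing by $\lambda\mu_\phi(U_{\delta'})$ gives exactly the statement. The factor $1/\mu_\phi(U_{\delta'})$ is a remnant of the square on the left, and it is why the paper ends with $\tilde\mu_n(U'_{\delta'})\ll\sqrt{\epsilon}$ rather than $\ll\epsilon$. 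By contrast, dividing your inequality $\lambda\mu\le A\mu+B$ by $\lambda\mu$ gives $1\le\lambda^{-1}(A+B/\mu)$, not the statement; you would be proving something stronger and false.

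A smaller, fixable point concerns quantifiers. $\cN$ must depend only on $\cT$ and $U$, so it has to be fixed before $\delta$, not defined via $U_\delta$. Define it from the finitely many $b=\gamma s$ ($s\in\cT$, $\gamma\in\Gamma$) with $bU\cap U\neq\emptyset$. Include both $U\cap bU$ and $U\cap b^{-1}U$, since AM--GM produces one set from the $x$-side and one from the $sx$-side. Then choose $\delta'$ so small that every other $\gamma s$ satisfies $\gamma sU_{\delta'}\cap U_{\delta'}=\emptyset$ and your compactness containment holds.
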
 

We also need the following classification of stabilizers, which will be a consequence of the results of \cref{classificationSec}.

\begin{lemma}[Classification of stabilizers]\label{stabilizerscor}
    Let $y\in\SV_2(\bar{\QQ}\cap\RR)$ and consider a nonempty subset $U\subset yHM\subset G$. Then the stabilizer of $U$ in $\SV_2(\bar{\QQ}\cap\RR)$ is either virtually small (see \cref{smallness-def}), or is virtually contained in a conjugate of $H$ by an element of $\SV_2(\bar{\QQ}\cap\RR)$ (that is, the stabilizer contains a finite index subgroup contained in such a conjugate of $H$). 
\end{lemma}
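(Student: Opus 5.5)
The plan is to reduce to \cref{finalClassification} by passing to the Zariski closure of the stabilizer and translating by $y$. Let $\Sigma \subset \SV_2(\bar{\QQ}\cap\RR)$ denote the stabilizer of $U$, so $\Sigma = \{s \mid sU = U\}$. Fix a point $u_0 = y h_0 m_0 \in U$. For every $s \in \Sigma$ we have $s u_0 \in U \subset yHM$. It follows that $h_0^{-1} y^{-1} s y h_0 \in HM$, exactly as in the proof of \cref{smallUnipotentLemma}; this uses that $M$ normalizes $H$, so that $HM$ is a group and $m_0$ may be moved across. Hence the conjugate $\Sigma' := (yh_0)^{-1}\Sigma (yh_0)$ lies in $HM$. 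This conjugation, however, involves $h_0 \in H$, which is not algebraic over $\bar{\QQ}\cap\RR$.

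To avoid this, I would conjugate only by $y$: the group $\Sigma_y := y^{-1}\Sigma y$ is contained in $h_0 HM h_0^{-1} = HM$, again since $HM$ is a group containing $h_0$. Let $S$ be the identity component of the real Zariski closure of $\Sigma_y$. Then $S$ is a connected real algebraic subgroup of $G$, and since $HM$ is itself a real algebraic subgroup (being $H$ times a compact group normalizing it), $S \subset HM$. Moreover $\Sigma_y \cap S$ has finite index in $\Sigma_y$, because the Zariski closure has finitely many components. Since $y$ and $\Sigma$ are defined over $\bar{\QQ}\cap\RR$, the group $S$ is defined over $\bar{\QQ}\cap\RR$ as well.

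Now apply \cref{finalClassification} to $S$. In the first case, $S \subset g H g^{-1}$ for some $g \in G$. I need the conjugating element to be algebraic. For this I would replace $gHg^{-1}$ by the smallest conjugate of $H$ containing $S$ and argue it is defined over $\bar{\QQ}\cap\RR$, or more directly use that the variety of conjugates of $H$ containing the $\bar{\QQ}\cap\RR$-group $S$ is defined over $\bar{\QQ}\cap\RR$ and nonempty, so it has a $\bar{\QQ}\cap\RR$-point (the real algebraic numbers form a real closed field, so Tarski transfer applies). Then \cite[Lemma 25]{ShemTovSilberman:AQUE4d} converts this into conjugation by some $b \in \SV_2(\bar{\QQ}\cap\RR)$. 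Conjugating back by $y$, a finite index subgroup of $\Sigma$ lies in $(yb)H(yb)^{-1}$, as required.

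In the second case, $S$ is conjugate to a small group, that is, $S \subset g\underline{P}(\RR)g^{-1}$ with $\underline{P}$ weakly small and defined over $\bar{\QQ}\cap\RR$. The same transfer argument lets us take $g$ algebraic. Then $y g \underline{P} g^{-1} y^{-1}$ is again weakly small and defined over $\bar{\QQ}\cap\RR$, because weak smallness is invariant under conjugation. So the finite index subgroup $\Sigma \cap ySy^{-1}$ is small, and $\Sigma$ is virtually small.

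The main obstacle is this rationality issue: \cref{finalClassification} only provides conjugation by arbitrary real elements, while both the definition of smallness and the conclusion require groups defined over $\bar{\QQ}\cap\RR$. I expect to handle it by the real-closed-field transfer argument above combined with \cite[Lemma 25]{ShemTovSilberman:AQUE4d}.
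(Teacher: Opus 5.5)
\textbf{There is a genuine error: $HM$ is not a group.} Your architecture matches the paper's: pass to $[\overline{\Sigma}^Z]^\circ$, apply \cref{finalClassification}, then recover an algebraic conjugator. Your real-closed-field transfer is an acceptable substitute for \cite[Lemma 18]{ShemTovSilberman:HH_AQUE_preprint}. The problem is the step where you conjugate only by $y$. You claim $y^{-1}\Sigma y\subset h_0HMh_0^{-1}=HM$ because ``$M$ normalizes $H$, so $HM$ is a group.'' This is false.

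\begin{itemize}
\item $M$ does not normalize $H$. With $\alpha=(1+\bj)/\sqrt{2}$ one computes $\diag(\alpha,\alpha')\diag(\bi,-\bi)\diag(\alpha,\alpha')^{-1}=\diag(-\bk,-\bk)\notin H$.
\item $HM$ is not a group at all. It is $8$-dimensional, while $H$ is a maximal connected subgroup of $G$, since $\mathfrak{g}/\mathfrak{h}$ is the irreducible $4$-dimensional representation of $H$.
\item Hence $h_0HMh_0^{-1}=HMh_0^{-1}\neq HM$ in general, and the inclusion $y^{-1}\Sigma y\subset HM$ fails. Take $y=1$ and $U=h_0M$ with $h_0\in H\cap\SV_2(\bar{\QQ}\cap\RR)$ such that $Mh_0^{-1}\not\subset HM$; such $h_0$ exist because $HM$ is not a group. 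The stabilizer is then $h_0M(\bar{\QQ}\cap\RR)h_0^{-1}$, which is not contained in $HM$.
\end{itemize}

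So \cref{finalClassification} does not apply to your $S$. Two side remarks. Your first appeal to the group property is harmless but unnecessary: $syh_0m_0=yh_1m_1$ gives $h_0^{-1}y^{-1}syh_0=(h_0^{-1}h_1)(m_1m_0^{-1})\in HM$ directly. And for $S\subset HM$ you only need that $HM$ is Zariski closed, which is true.

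\textbf{The repair is to conjugate by $yh_0$, as the paper does.} Apply \cref{finalClassification} to $(yh_0)^{-1}[\overline{\Sigma}^Z]^\circ(yh_0)$, which does lie in $HM$. Conjugating back, $[\overline{\Sigma}^Z]^\circ$ lies in $g'Hg'^{-1}$ or in $g'\underline{P}(\RR)g'^{-1}$ for some real $g'$.

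Your concern that $h_0$ is not algebraic is unfounded. Your own rationality step never needs the conjugator to be algebraic. It needs only that the group being conjugated, namely $[\overline{\Sigma}^Z]^\circ$, is defined over $\bar{\QQ}\cap\RR$. That holds by \cref{def_over_field} because $\Sigma\subset\SV_2(\bar{\QQ}\cap\RR)$. With this change, the rest of your argument goes through.
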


For the proof we use the following general fact. 

\begin{lemma}\label{def_over_field}
    If $V$ is a real algebraic variety that contains a Zariski dense subset of $(\bar{\QQ} \cap \RR)$-points, then $V$ is defined over $\bar{\QQ}\cap\RR$.     
\end{lemma}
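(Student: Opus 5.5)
The statement to prove is \cref{def_over_field}: if a real algebraic variety $V$ contains a Zariski dense set of $(\bar{\QQ}\cap\RR)$-points, then $V$ is defined over $k:=\bar{\QQ}\cap\RR$. I would use the standard Galois-descent/linear-algebra argument. Embed $V$ in affine space $\mathbb{A}^n_{\RR}$ (we only need subvarieties of $G\subset M_2(\bH)\cong\RR^{16}$) and write $I(V)\subset\RR[x_1,\dots,x_n]$ for its ideal. Since $V$ is Zariski closed, $I(V)$ is generated by polynomials of degree at most some $D$. The goal is to show that $I(V)$ is generated by polynomials with coefficients in $k$.

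\textbf{Step 1.} Let $Z\subset V(k)$ be the given Zariski dense subset. For each degree bound $d$, consider the real vector space $I_d$ of polynomials of degree at most $d$ vanishing on $V$. By Zariski density of $Z$ in $V$, a polynomial vanishes on $V$ exactly when it vanishes on $Z$. So $I_d$ is the kernel of the linear map $\RR[x]_{\le d}\to\RR^{Z}$ given by evaluation at points of $Z$. This is a system of linear equations whose coefficients are monomials evaluated at points of $Z$, hence lie in $k$.

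\textbf{Step 2.} The solution space over $\RR$ of a linear system with coefficients in a subfield $k$ equals the $\RR$-span of its solution space over $k$. This holds because Gaussian elimination can be performed inside $k$, or equivalently because $\ker(A\otimes_k\RR)=\ker(A)\otimes_k\RR$. The system may be infinite, but $\RR[x]_{\le d}$ is finite dimensional, so finitely many equations from $Z$ already cut out $I_d$. Hence $I_d$ has a basis of polynomials with coefficients in $k$.

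\textbf{Step 3.} Taking $d=D$, these $k$-rational polynomials generate $I(V)$. Thus $V$ is the zero locus of polynomials over $k$, that is, $V$ is defined over $\bar{\QQ}\cap\RR$.

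There is no real obstacle. The only point needing care is the sense of ``defined over'': if it requires the ideal itself, and not just the set, to be generated over $k$, Step 2 already gives this. If $V$ is only given as an abstract subvariety of $G$, I would first note that $G$ is defined over $\ZZ$, so that the embedding into $\RR^{16}$ is $k$-rational.
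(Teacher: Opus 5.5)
Your argument is correct and is essentially the paper's. Both rest on the fact that vanishing at $k$-points is a $k$-linear condition, so real solutions are $\R$-spans of $k$-rational ones. The paper phrases this differently: it expands each defining polynomial as $f=\sum_i \alpha_i f_i$ over a basis $\{\alpha_i\}$ of $\R$ as a vector space over $k$. Linear independence at each $k$-point then forces every $f_i$ to vanish on the dense set, hence on $V$. This avoids your degree bound and the reduction to a finite subsystem.
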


\begin{proof}
    This follows directly from fixing a basis $\{\alpha_i\}_i$ for $\R$ over $(\bar{\Q}\cap \R)$ and writing each defining polynomial $f$ of $V$ as a finite linear combination $f = \sum_i \alpha_i f_i$ of polynomials $f_i$ with coefficients in $\bar{\Q}\cap \R$.
    
\end{proof}

\begin{proof}[Proof of \cref{stabilizerscor}]
    Let $S$ denote the stabilizer of $U$ in $\SV_2(\bar{\QQ}\cap\RR)$, and consider an element $yhm \in U$, for $h\in H$ and $m \in M$. Then for every $s\in S$ we have $syhm \in U \subset yHM$, which implies $h^{-1} y^{-1}s yh \in HM$. Thus some conjugate $S'=gSg^{-1}$ (with $g\in G$) of $S$ is contained in $HM$, which implies that $\overline{S'}^Z \subset HM$ (due to the fact that $HM$ is Zariski closed). In particular the connected component of the identity of $\overline{S'}^Z$, denoted $[\overline{S'}^Z]^\circ$, is contained in $HM$. By \cref{finalClassification}, it is either contained in a conjugate of $H$ or it is conjugate to a small group. 
    
    If $[\overline{S'}^Z]^\circ$ is conjugate to a small group, then there exist $c \in G$ and a weakly small subgroup $\underline{P}$ of $\SV_2$, defined over $\bar{\QQ}\cap\RR$, such that $c[\overline{S'}^Z]^\circ c^{-1} \subset \underline{P}(\RR)$. Thus $cg[\overline{S}^Z]^\circ g^{-1}c^{-1} \subset \underline{P}(\RR)$. Note that $[\overline{S}^Z]^\circ$ is defined over $\bar{\Q} \cap \R$, by \cref{def_over_field}. So \cite[Lemma 18]{ShemTovSilberman:HH_AQUE_preprint} implies that $[\overline{S}^Z]^\circ \subset d \underline{P}(\RR) d^{-1}$ for some $d \in \SV_2(\bar{\Q}\cap \R)$. Thus $[\overline{S}^Z]^\circ$ is small. Since it is of finite index in $\overline{S}^Z$, note that $[\overline{S}^Z]^\circ \cap S$ has finite index in $S$, so $S$ is virtually small. 
    
    Otherwise, $[\overline{gSg^{-1}}^Z]^\circ\subset g'Hg'^{-1}$ for some $g'\in G$. Thus $[\overline{S}^Z]^\circ$ is contained in a conjugate of $H$ by some element of $G$. Since both $[\overline{S}^Z]^\circ$ and $H$ are defined over $\bar{\QQ}\cap\RR$ by \cref{def_over_field}, we have that $[\overline{S}^Z]^\circ$ is contained in a conjugate of $H$ by an element of $\SV_2(\bar{\QQ}\cap\RR)$ by {\cite[Lemma 18]{ShemTovSilberman:HH_AQUE_preprint}}, and the result follows. 
    
\end{proof}

\begin{proof}[Proof of \cref{maintech}]
    We need to show that any weak-$*$ limit $\tilde{\mu}$ of the sequence $\tilde{\mu}_n$ gives measure zero to $\Gamma yHM$, for any $y \in\SV_2(\bar{\QQ}\cap\RR)$. It suffices to prove that such a weak-$*$ limit gives zero measure to (the image in $\Gamma\bs G$ of) any bounded subset $U$ of $L = yHM$ contained in a fundamental domain for $\Gamma\bs G$. We prove the following claim, which implies the desired result. 
    
    \begin{claim*}
        For every bounded subset $U$ of $L$ contained in a fundamental domain for $\Gamma\bs G$, and for every $\alpha\in G$, there exists a compact neighborhood $B$ of the identity element in $G$ such that the set $U'=U\cap B\alpha$ satisfies the following property: 
        for any $\epsilon>0$ there exists $\delta>0$ such that $\tilde{\mu}_n(U'_\delta)\le\epsilon$ for every $n$.
    \end{claim*}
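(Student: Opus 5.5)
We prove the Claim by induction on $\dim \bar{U}^Z$, running over all bounded subsets $U$ of translates $yHM$ with $y\in\SV_2(\bar{\QQ}\cap\RR)$. The base case $\dim U=0$ is immediate: $U$ is then a finite set of points, and the measures $\tilde{\mu}_n$ of small balls around a point tend to zero uniformly in $n$. This follows from \cref{basic} applied with any amplifier of small support, or from the standard fact that arithmetic quantum limits of Hecke eigenfunctions have no atoms.

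For the inductive step we first localize. Given $\alpha\in G$, choose $B$ so small that $U'=U\cap B\alpha$ has irreducible Zariski closure. This is possible by taking $\alpha$ in a single irreducible component (and arguing separately near intersections of components, which have lower dimension). By \cref{lemma:parallel_equals_stabilizer}, the parallel elements of $\SV_2(\bar{\QQ}\cap\RR)$ for $U'$ then lie in $\Gamma S$, where $S$ is the stabilizer of $\bar{U'}^Z$. Since $\bar{U'}^Z\subset yHM$, \cref{stabilizerscor} says that $S$ is either virtually small or virtually contained in $bHb^{-1}$ for some $b\in\SV_2(\bar{\QQ}\cap\RR)$.

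In the virtually small case we use the amplifiers of \cite{ShemTovSilberman:AQUE4d}, built from Marshall's construction. In the remaining case we use $\tau_n(P,b)$ from \cref{lem:GlobalAmplifier}. The parallel contribution is then controlled by $\norm{\tau_n}_{L^1(\mathcal{P}\cap U'_{\delta'}(U'_{\delta'})^{-1})}$. Since the parallel returns lie in $\Gamma S$, and $S$ is virtually in $bHb^{-1}$, this is at most a constant (depending on $U$ and on the finite index) times $\norm{\tau_n}_{L^1(bHb^{-1})}$. Here we use that $U'$ is bounded and that only finitely many $\Gamma$-translates matter.

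Dividing by $\sL_n(P,b)$ in \cref{basic}, the first term is $\ll_b \log P/P$. For the second term, the supports of all $\tau_n(P,b)$ for fixed $P$ lie in a finite set independent of $n$, since $c_n(p)\in\{-1,0,1\}$ and $\tau_n(p)\in\{T_2(p),\sigma(p)\}$. So we can take the union $\cN$ of the corresponding finite families from \cref{basic}, uniformly in $n$. We also have $\norm{\tau_n}_\infty\ll P^{C}$ while $\sL_n\gg P^{7}/(\log P)^2$. The second term is then at most $P^{C}\,\mu_n(N_\delta)/\mu_n(U'_{\delta'})$ summed over the finitely many $N\in\cN$. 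Each $N=U'\cap bU'$ lies in a translate of $HM$ by an element of $\SV_2(\bar{\QQ}\cap\RR)$ and has smaller dimension, so by induction (covering $N$ by finitely many sets $N\cap B_i\alpha_i$ using compactness) $\sup_n\mu_n(N_\delta)\to0$ as $\delta\to0$.

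To combine these, write $x=\mu_n(U'_{\delta'})$ and treat the bound as $x\le a+c/x$. This gives $x\le a+\sqrt{c}$. Fix $\epsilon$, choose $P$ with $C_b\log P/P<\epsilon/2$, and then choose $\delta$ so small that the induction makes $c<(\epsilon/2)^2$ for all $n$. We conclude $\tilde{\mu}_n(U'_{\delta'})\le\epsilon$ for every $n$, taking $\delta'$ in place of $\delta$ in the Claim.

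\textbf{Expected difficulty.} The main obstacle is the uniformity bookkeeping. The amplifier depends on $n$ through the signs and case choices, so the transverse family $\cN$ and the bound on $\norm{\tau}_\infty$ must be uniform over the finitely many possible operators at a given $P$. We also need the parallel set $\mathcal{P}$ near $U'$ to genuinely reduce to $bHb^{-1}$, with good primes for $b$ rather than for $y$. Finally, the finite-index passage in \cref{stabilizerscor} must cost only a constant independent of $P$.
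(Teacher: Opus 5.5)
Your architecture matches the paper's: induction on dimension, \cref{stabilizerscor}, the amplifier of \cref{lem:GlobalAmplifier} or \cite[Corollary 10]{ShemTovSilberman:AQUE4d}, then \cref{basic}, uniformity from the finitely many operators at fixed $P$, and $x\le a+c/x$. The gap is the step from ``$S$ is virtually contained in $bHb^{-1}$'' to ``the parallel term is at most a constant times $\norm{\tau_n}_{L^1(bHb^{-1})}$''; as written it does not go through.

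Write $S=\bigcup_{i=1}^k s_iS_0$ with $S_0=S\cap[\bar{S}^Z]^\circ\subset bHb^{-1}$. Parallel returns coming from a coset $s_iS_0$ with $s_i\notin bHb^{-1}$ land in $\pi(s_ibHb^{-1})$, which is a translate of the orbit. Nothing in your argument controls $\norm{\tau_n}_{L^1(s_ibHb^{-1})}$. The vanishing of $\norm{T_2(p)}_{L^1(bHb^{-1})}$ and the bound of \cref{IntersectionWithH-estimate} come from $b\in K_p$ at primes good for $b$, via \eqref{eq:good_conjugate_group}. Transferring them to $s_ibHb^{-1}$ would also require $s_i\in K_p$, which need not even make sense under the embedding chosen for $b$. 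So the finite index is harmless only if every translate satisfies the same bound, and that is exactly what is not established. You flagged this as a difficulty but did not resolve it.

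The missing idea is that the localization $U'=U\cap B\alpha$ is there precisely to eliminate the other cosets. If $x$ is parallel, then $\gamma xU'\cap U'\neq\emptyset$ for some $\gamma\in\Gamma$. Hence the stabilizer element $\gamma x$ produced in \cref{lemma:parallel_equals_stabilizer} lies in $U'U'^{-1}\subset BB^{-1}$. Since $[\bar{S}^Z]^\circ$ is open in $\bar{S}^Z$, you can choose $B\subset\tilde{B}$ so small that $S\cap\tilde{B}\tilde{B}^{-1}\subset S_0\subset bHb^{-1}$. Then every parallel return satisfies $\pi(x)=\pi(\gamma x)\in\pi(bHb^{-1})$, and \cref{lem:GlobalAmplifier} applies directly; the same fix works in the small case.

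To avoid circularity, order the steps as the paper does:
\begin{enumerate}
\item First reduce to irreducible $\bar{U}^Z$ by splitting $U$ along the irreducible components $V_i$. Your localization does not achieve irreducibility near $V_1\cap V_2$, where $U\cap B\alpha$ is still full-dimensional.
\item Take $S$ to be the stabilizer of $\bar{U}^Z$, and only then choose $B$.
\item Observe that either $\dim(U\cap B\alpha)<\dim U$, and the induction applies, or $\overline{U\cap B\alpha}^Z=\bar{U}^Z$, so $S$ is unchanged.
\end{enumerate}
Alternatively, you could restrict the amplifier to primes good for $b$ and all the $s_i$ simultaneously; Chebotarev still gives positive density. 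But that changes the operator and would have to be checked.

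Finally, your base case needs an amplifier with $\tau(1)/\lambda$ small, not merely small support. Starting the induction at $\dim U=-1$, as the paper does, absorbs points into the inductive step.
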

    
    The proof is by induction on the dimension of $U$. The base of the induction is $\dim U=-1$, that is $U$ is the empty set, which is vacuously true.  
    Let $U$ be any non-empty bounded subset of $L$ contained in a fundamental domain of $\Gamma\backslash G$. 
    
    We may assume without loss of generality that $U$ is compact by passing to the closure of $U$ (which does not change $\dim{U}$), and that $\bar{U}^Z$ is irreducible  by passing to the (compact) intersections $U_i = U \cap V_i$, where the $V_i$ denote the irreducible components of $\bar{U}^Z$. Indeed, note that either $\bar{U_i}^Z = V_i$ (hence $\bar{U_i}^Z$ is irreducible), or $\dim U_i := \dim \bar{U_i}^Z < \dim V_i \leq \dim \bar{U}^Z =: \dim U$, in which case we simply apply the induction hypothesis to $U_i$.
    
    By \cref{stabilizerscor}, either the stabilizer of $\bar{U}^Z$ in $\SV_2(\bar{\QQ}\cap\RR)$ is virtually small (see \cref{smallness-def}), or it is virtually contained in $gHg^{-1}$ for an element $g \in \SV_2(\bar{\QQ}\cap\RR)$. 
    Let $\alpha\in G$ be given as in the induction hypothesis. Fix compact neighborhoods $B$ and $\tilde{B}$ of the identity in $G$ such that $B$ is contained in the interior of $\tilde{B}$, and the intersection of $\tilde{B}\tilde{B}^{-1}$ with the stabilizer of $\bar{U}^Z$ is either small (rather than just virtually small) or contained in $gHg^{-1}$. Let $U'=U\cap B\alpha$. If $\dim{U'} < \dim{U}$ then by the induction hypothesis we have that $U'\cap B'\alpha=U\cap (B\cap B')\alpha$ satisfies the required property for some compact neighborhood $B'$ of the identity and we are done. Thus we may assume that $\dim{U'} = \dim{U}$, hence they have the same Zariski closure (since $\bar{U}^Z$ is irreducible).         
    
    If the intersection of the stabilizer of $\bar{U}^Z = \bar{U'}^Z$ with $\tilde{B} \tilde{B}^{-1}$ is small we apply \cite[Corollary 10]{ShemTovSilberman:AQUE4d}. If it is contained in a conjugate $gHg^{-1}$ we apply the bounds in \cref{lem:GlobalAmplifier}. Either way, combined with \cref{lemma:parallel_equals_stabilizer}, the conclusion is that for every $n$ there exists a positive definite Hecke operator $\tau_n = \tau_n(\varepsilon, g)$ satisfying $\norm{\tau_n}_\infty \ll_{\varepsilon, g} 1$ and with support contained on a set that depends only on $\{\varepsilon, g\}$ (and not on $n$) such that for every $\delta'>0$ small enough in terms of $U'$ (so that $U'_{\delta'}(U'_{\delta'})^{-1}\subset \tilde{B} \tilde{B}^{-1}$), we have
    \begin{equation}\label{ParBound}
        \frac{1}{\sL_n}\norm{\tau_n}_{L^1(\cP_n \cap U'_{\delta'}(U'_{\delta'})^{-1})} \le \epsilon,
    \end{equation} 
    where $\sL_n$ is the $\tau_n$-eigenvalue of $\psi_n$ and $\cP_n$ denotes the set of parallel elements of $\tau_n$ with respect to $U'$.  

    We now apply \cref{basic} (with $L = \bar{U'}^Z = \bar{U}^Z$ and $\tau = \tau_n$). Let $\cN_n$ denote the corresponding collection of compact subsets $N \subset U'_0$, which satisfy $\dim{N} < \dim{U'}$. By the induction hypothesis, 
    \begin{equation*}
        \lim_{\delta\to 0^+} \lim_{n \to \infty} \tilde{\mu}_{n}(N_\delta) = 0.
    \end{equation*}
    
    Since the support of $\tau_n$ is contained on a set that depends only on $\{\varepsilon, g\}$ (and not on $n$), \cref{basic} implies that by enlarging the collection $\cN_n$ we may choose it to be fixed as $n$ varies, and denote it by $\cN$. Then we can choose $\delta>0$ sufficiently small in terms of $\cN$ (hence depending only on $\{U', \varepsilon, g\}$) so that for every $n$ we have
    \begin{equation}\label{TranBound}
        \norm{\tau_n}_\infty \sum_{N\in\cN} \tilde{\mu}_{n}(N_\delta) \le \epsilon. 
    \end{equation}
    
    Combining \eqref{ParBound} and \eqref{TranBound}, we get by \cref{basic} that there exists $0 < \delta' < \delta$ such that for every $n$ we have
    \begin{equation*}
        \tilde{\mu}_n(U'_{\delta'})\le \epsilon+\frac{\epsilon}{\tilde{\mu}_n(U'_{\delta'})} \ll \sqrt{\epsilon}, 
    \end{equation*}
    and the result follows.
    
\end{proof}

%%%%%%%%%%%%%%%%%%%%%%%%%%%%%%%%%%%%%%%%%%%%%%%%%%%%%%%

\appendix

\crefalias{section}{appendix}

\section{Definition and properties of Pitale lifts}\label{section:Pitale_appendix}

\subsection{Maass forms of weight \texorpdfstring{$1/2$}{} on \texorpdfstring{$\H^2$}{}}

Let $S_{1/2}$ denote the space of Maass cusp forms of weight $1/2$ on the upper half-plane $\H^2 = \{z = x + \bi y \in \bH : x\in \R, y > 0\}$, which are smooth functions $f: \H^2 \to \C$ satisfying the following properties.
\begin{itemize}
    \item[(a)] 
    For all $\gamma \in \Gamma_0(4) = \left\{ \left(\begin{smallmatrix}
        a & b \\ 
        c & d
    \end{smallmatrix}\right)
    \in \SL_2(\Z) : 4 \mid c \right\}$ and $z \in \H^2$,
        \begin{equation*}
        f(\gamma z) = J(\gamma, z) f(z)
    \end{equation*}
    for the multiplier $J(\gamma, z) = \frac{\theta(\gamma z)}{\theta(z)}$ of $\theta(z) = y^{1/4} \sum_{n\in \Z} e(n^2 z)$, where $\gamma z = \frac{az+b}{cz+d}$.

    \item[(b)]
    We have $\Delta_{1/2} f + \lambda f = 0$ for some eigenvalue $\lambda \in \C$, where the Laplacian of weight $1/2$ is given by
    \begin{equation*}
        \Delta_{1/2} = y^2(\partial_x^2 + \partial_y^2) - \tfrac{1}{2} \bi y \partial_x.
    \end{equation*}

    \item[(c)] 
    We have $\norm{f}_2^2 := \int_{\Gamma_0(4) \backslash \H^2} |f|^2 \frac{dx dy}{y^2} < \infty$, and the function $f$ is cuspidal, i.e.\ has vanishing zero-th Fourier coefficient at each of the three cusps ($\infty$, $0$, and $1/2$) of $\Gamma_0(4) \backslash \H^2$.
    
\end{itemize}

Every $f \in S_{1/2}$ has a Fourier expansion (at $\infty$)
\begin{equation}\label{eq:half_integral_Fourier_expansion}
    f(z) = \sum_{0 \neq n \in \Z} b_f(n) W_{\frac{\sgn(n)}{4}, \frac{\bi r}{2}}(4 \pi |n|y) e(nx)
\end{equation}
for $\lambda = \frac{1}{4} + (\frac{r}{2})^2$ and $W_{\nu, \mu}$ the classical Whittaker function -- so in particular $W_{0,s}(4 \pi y) = 2 \sqrt{y} K_s(2\pi y)$. 

The Kohnen \cite{Koh80} plus space $S_{1/2}^+ \subset S_{1/2}$ consists of those $f \in S_{1/2}$ for which $b_f(n) = 0$ whenever $n \equiv 2, 3 \pmod{4}$. For each odd prime $p$, the Hecke operator $T_{p^2}: S_{1/2} \to S_{1/2}$ is defined by 
\begin{equation}\label{eq:half_integral_Hecke_op_def}
    (T_{p^2} f)(z) = \sum_{0 \neq n \in \Z} b^{(p)}_{f}(n) W_{\frac{\sgn(n)}{4}, \frac{\bi r}{2}}(4 \pi |n|y) e(nx)
\end{equation}
for 
\begin{equation*}
    b^{(p)}_{f}(n) := p b_f(np^2) +  p^{-1/2} \Big(\frac{n}{p}\Big) b_f(n) + p^{-1} b_f(n/p^2).
\end{equation*}
We call $f \in S^+_{1/2}$ a \emph{Hecke cusp form} if it is an eigenfunction of each $T_{p^2}$ for $p \neq 2$. Katok and Sarnak \cite[Proposition 1.4]{KS93} showed that the $T_{p^2}$ are self-adjoint, commute with each other and with $\Delta_{1/2}$, and map $S^+_{1/2}$ to itself. Thus $S_{1/2}^+$ has an orthonormal basis of Hecke cusp forms.

%%%%%%%%%%%%%%%%%%%%%%%%%%%%%%%%%%%%%%%%%%%%%%%%%%%%%%%

\subsection{Pitale lifts on \texorpdfstring{$\H^4$}{}}

For each $f \in S_{1/2}^+$ with $\Delta_{1/2}$-eigenvalue $\frac{1}{4} + (\frac{r}{2})^2$ and Fourier expansion \eqref{eq:half_integral_Fourier_expansion}, define $\Pit{f}: \H^4 \to \C$ by
\begin{equation*}
    \Pit{f}(z) = y^{3/2} \sum_{0 \not= \beta \in V^3(\ZZ)} A_{\Pit{f}}(\beta) K_{\bi r}(2 \pi |\beta| y) e\left(\mathrm{Re}(\beta z)\right),
\end{equation*}
where for $0 \neq \beta \in V^3(\Z)$ of the form $\beta = b_0 + b_1 \bi + b_2 \bj$ with $b_i \in \Z$ we write $2^u d = \gcd(b_0, b_1, b_2)$ for $u\geq 0$ and $d \geq 1$ odd, and set
\begin{equation*}
    A_{\Pit{f}}(\beta) := |\beta| \sum_{t = 0}^u (-1)^t 2^{t/2} \sum_{\substack{n \mid d \\ n > 0}} b_f \Big(\frac{-|\beta|^2}{(2^t n)^2}\Big) n^{-1/2}.
\end{equation*}
Pitale has shown \cite[Theorem 3.3]{Pit05} that $\Pit{f}$ is a Maass cusp form on $\SV_2(\Z) \backslash \H^4$ with $\Delta$-eigenvalue $(\frac{3}{2})^2+r^2$. Furthermore, the map $f \mapsto \Pit{f}$ is injective \cite[Theorem 4.6]{Pit05}.

If a non-zero $f \in S_{1/2}^+$ is a Hecke cusp form with $T_{p^2}f = \lambda_f(p) f$ for $p\neq 2$, then $\Pit{f}$ is a non-zero joint eigenfunction of the Hecke algebra $\cH$. Crucially for us, recalling the Hecke operators in \eqref{eq:T_p_matrices}, by \cite[Theorems 5.9 and 5.12]{Pit05} we obtain 
\begin{equation*}
    T_j(p) (\Pit{f}) = \Lambda_{j, f}(p) \cdot \Pit{f}
\end{equation*}
for $j\in \{1, 2\}$, where
\begin{equation*}
\Lambda_{1,f}(p)= p\big(p^{1/2} \lambda_f(p) + p + 1\big)\qquad \text{and} \qquad \Lambda_{2,f}(p)=(p+1)\big(p^{3/2} \lambda_f(p) + p - 1\big). 
\end{equation*}

\begin{definition}[Pitale lifts]\label{def:Pitale_lifts}
    Fix a basis $\{f_n\}_{n\geq 1}$ of the Kohnen plus space $S_{1/2}^+$ consisting of (non-zero) Hecke--Maass cusp forms of weight $1/2$, ordered by Laplacian eigenvalue. The sequence $\{\psi_n\}_{n\geq 1}$ of Pitale lifts is given by $\psi_n = \frac{\Pit{f_n}}{\norm{\Pit{f_n}}_2}$.
\end{definition}

Note that the lifted Hecke--Maass cusp forms $\psi_n$ on $\SV_2(\Z)\backslash \H^4$ are distinct, $L^2$-normalized, and also ordered by Laplacian eigenvalue.

%%%%%%%%%%%%%%%%%%%%%%%%%%%%%%%%%%%%%%%%%%%%%%%%%%%%%%%

\section{Computations of Hecke operators}\label{section:computation_appendix}

In this section we prove \cref{lem:DecompositionsUsingComputer}. In the notation of \cref{sec:HeckeTranslates}, the isomorphism $\psi_p$ gives identifications $G_p \isom \PGSp_4(\QQ_p)$ and $K_p \isom \PGSp_4(\ZZ_p)$, and by \cref{lemma:symplectic_Hecke_op}, $\tau_{m, l}(p)$ corresponds to the characteristic function of $K_p\diag(1,p^m,p^l,p^{l-m})K_p$. 

The basic idea for the proof of \cref{lem:DecompositionsUsingComputer} is simple: the algebra $\cH(G_p,K_p)$ is isomorphic (via the Satake isomorphism) to a certain algebra of rational functions in two variables. In particular, the images of the $K_p$-double cosets under this isomorphism form a linear basis for this algebra of rational functions. We compute these images explicitly, then decompose products of such basic rational functions as linear combinations in them. To compute the images of the double cosets we use Macdonald's formula for spherical functions. To decompose products of polynomials in terms of the basis elements we use a computer program \cite{code_repo}. 

Similar computations with certain operators in $\cH(G_p,K_p)$ can be found in work of Blomer and Pohl \cite[Sections 4 and 8]{BP16}, relying on explicit single-coset decompositions due to Kodama \cite{Kod67}. We used some of the more complicated relations found there, such as \cite[(4.9)]{BP16}, to test the correctness of our program. In \cite{BP16} the authors remark that
\begin{quote}
    ``It is tempting to perform all of these computations in the algebra of Weyl group invariant polynomials using the Satake isomorphism. Unfortunately, the computations are by no means easier, since it is very hard to compute explicitly the image of a given double coset.''
\end{quote}
We hope our program (and its extensions to other groups) will remove this computational difficulty in future applications.

%%%%%%%%%%%%%%%%%%%%%%%%%%%%%%%%%%%%%%%%%%%%%%%%%%%%%%%

\subsection{Spherical transform}

Let
\begin{equation*}
    T_{n,m,l}=\diag(p^n,p^m,p^{l-n},p^{l-m}),
\end{equation*}
which represent elements of the diagonal torus of $G_p$. Let $\cT$ be the lattice generated by the $T_{n,m,l}$, for $n, m, l \in \Z$. For each $f\in \cH(G_p,K_p)$ define the spherical transform
\begin{equation*}
    \widehat{f}(s) := (f * \omega_s)(1) = \int_{G_p}f(g)\omega_s(g^{-1})\; dg,
\end{equation*}
where $\omega_s$ denotes the spherical function with parameter $s\in\rm{Hom}(\cT,\CC^\times)$ (see \cite{Macdonald:SphericalFunctionsBook} for details about spherical functions). By \cite[Theorem (3.3.6)]{Macdonald:SphericalFunctionsBook}, the function $f\in \cH(G_p,K_p)$ is uniquely determined by its spherical transform $\widehat{f}$ and we have 
\begin{equation}\label{ConvProd}
    \widehat{f_1*f_2}=\widehat{f_1}\cdot \widehat{f_2}
\end{equation}
for every $f_1,f_2\in \cH(G_p,K_p)$. The spherical transform $\widehat{f}(s)$ can be viewed as a rational function in two variables: since we work modulo the center, a choice of $s\in \rm{{Hom}}(\cT,\CC^\times)$ is equivalent to a choice of $s(T_{0,1,0}) = Y$ and $s(T_{0,0,1}) = Z$. In those coordinates, since $s(T_{1, 1, 2}) = 1$ we have $s(T_{n, m, l}) = Y^{m-n} Z^{l-2n}$.

Choose a set of positive roots $\{\alpha_1,\alpha_2, \alpha_3,\alpha_4\}$ by defining their action on $T_{n,m,l}$ by
\begin{equation*}
    \alpha_1 = m-n, \qquad \alpha_2 = l-2n, \qquad \alpha_3 = l-n-m, \qquad \alpha_4 = l-2m.
\end{equation*}
The corresponding coroots $\check{\alpha}_i$ are cocharacters that map $p$ to $T_{-1,1,0},T_{-1,0,0},T_{-1,-1,0},T_{0,-1,0}$, respectively.

%%%%%%%%%%%%%%%%%%%%%%%%%%%%%%%%%%%%%%%%%%%%%%%%%%%%%%%

\subsection{Macdonald's formula for spherical functions}

We have the following formula for $\omega_s$, due to Macdonald \cite[Theorem (4.1.2)]{Macdonald:SphericalFunctionsBook} for simply connected groups, and Casselman \cite{Casselman1980} for general reductive groups (see also \cite{Casselman2012RemarksOM}). 
Call $T_{n,m,l}$ \emph{positive} if $\alpha_i(T_{n,m,l})\ge0$ for all $1\le i\le4$, which is equivalent to $n\le m\le l/2$. For positive $T_{n,m,l}$ we have
\begin{equation}\label{eq:MacDonald} 
    \omega_s(T_{-n,-m,-l})=Q(p^{-1})^{-1}p^{-\rho(T_{n,m,l})}\sum_{w\in W}ws(T_{n,m,l})c(ws), 
\end{equation}
where $\rho(T_{n,m,l})=\frac{1}{2}(3l-2m-4n)$, $Q$ is the Poincar{\'e} polynomial (see \cite[page 40]{Macdonald:SphericalFunctionsBook} for the definition), the Weyl group $W$ acts on $s$ by $ws(T_{n,m,l}) := s(w^{-1}T_{n,m,l})$ according to \eqref{eq:Weyl_group_gens}, and $c(s)$ is the Harish-Chandra function 
\begin{equation*}
    c(s)=\prod_{i=1}^4\frac{1-p^{-1}s(\check{\alpha}_i)^{-1}}{1-s(\check{\alpha}_i)^{-1}}.
\end{equation*}
The expression in \eqref{eq:MacDonald} is a Laurent polynomial in $Y$ and $Z$. Observe that 
\begin{equation}\label{eq:hecke_spherical_transf}
    \widehat{\mathbf{1}}_{K_pT_{n,m,l}K_p}(s)=\int_{K_pT_{n,m,l}K_p}\omega_s(g^{-1})\, dg=\vol(K_pT_{n,m,l}K_p)\omega_s(T_{-n,-m,-l}).
\end{equation}

%%%%%%%%%%%%%%%%%%%%%%%%%%%%%%%%%%%%%%%%%%%%%%%%%%%%%%%

\subsection{Volumes}

The volume $\vol(K_pT_{n,m,l}K_p)$ is computed using \cite[Chapter III]{Macdonald:SphericalFunctionsBook}. More precisely, set $C_0 := \{ T_{n,m,l} : n < m < l/2 \}$, and for $w \in W$ let
\begin{equation*}
    C_0(w) := \{ 1 \leq i \leq 4 : \alpha_i(wT) < 0 \text{ for all } T \in C_0\}.
\end{equation*}
For $t = T_{n,m,l}$ positive, denote $W_t := \{ w \in W : wt = t\}$ and
\begin{equation*}
    Q_t(x) := \sum_{w \in W_t} x^{|C_0(w)|}.
\end{equation*}
Then the Poincar{\'e} polynomial is
\begin{equation*}
    Q(x) = \sum_{w \in W} x^{|C_0(w)|} = x^4 + 2x^3 + 2x^2 + 2x + 1,
\end{equation*}
and \cite[Proposition (3.2.15)]{Macdonald:SphericalFunctionsBook} gives
\begin{equation*}
    \vol(K_p t K_p) = \frac{Q(p^{-1})}{Q_t(p^{-1})} p^{2 \rho(t)}.
\end{equation*}

%%%%%%%%%%%%%%%%%%%%%%%%%%%%%%%%%%%%%%%%%%%%%%%%%%%%%%%

\subsection{Decomposition into basic operators}

For any Hecke operator $\tau \in \cH(G_p, K_p)$ given as a polynomial on the basic operators $\tau_{m, l}(p)$, we can compute its spherical transform $\widehat{\tau}(s) =: \widehat{\tau}(Y, Z)$, which is a rational function of $Y$ and $Z$, by using \eqref{eq:hecke_spherical_transf} and \eqref{eq:MacDonald} for the indicator function of $K_p T_{0,m,l} K_p$, which corresponds to $\tau_{m,l}(p)$, and applying \eqref{ConvProd}. 

With explicit rational functions at hand for $\widehat{\tau}(Y, Z)$ and each $\widehat{\tau}_{m,l}(Y, Z)$, we can search\footnote{Such a relation is guaranteed to exist, since it is merely an expansion into a linear basis.} for a finite relation in $\C[Y^{\pm 1}, Z^{\pm 1}]$ of the form
\begin{equation*}
    \widehat{\tau}(Y, Z) = \sum_{l\geq 2m \geq 0} b_{m,l} \cdot \widehat{\tau}_{m,l}(Y, Z)
\end{equation*}
with $b_{m,l} \in \C$. Once such a relation is found, injectivity of the spherical transform gives the desired decomposition into basic operators,
\begin{equation*}
    \tau = \sum_{l\geq 2m \geq 0} b_{m,l} \cdot \tau_{m,l}(p).
\end{equation*}

%%%%%%%%%%%%%%%%%%%%%%%%%%%%%%%%%%%%%%%%%%%%%%%%%%%%%%%

\subsection{Proof of \texorpdfstring{\cref{lem:DecompositionsUsingComputer}}{}}

We implemented the algorithm described above in SageMath. The code is available at \cite{code_repo}, and using it we obtain precisely the relations given in \cref{lem:DecompositionsUsingComputer}. For the reader's convenience, let us give the spherical transforms of all the relevant operators. We compute
{\tiny
\begin{align*}
    \autoalign{\widehat{T^2_2}(Y, Z) = p^{4} Y^{2} Z^{4} + 2 p^{4} Y^{2} Z^{2} + 2 p^{4} Y Z^{2} + p^{4} Y^{2} + 2 p^{4} Z^{2} + 2 p^{4} Y - 2 p^{2} Y Z^{2} + 5 p^{4} + 2 p^{4} Y^{-1} - 2 p^{2} Y + 2 p^{4} Z^{-2} + p^{4} Y^{-2} - 2 p^{2} + 2 p^{4} Y^{-1} Z^{-2} - 2 p^{2} Y^{-1} + 2 p^{4} Y^{-2} Z^{-2} + 1 - 2 p^{2} Y^{-1} Z^{-2} + p^{4} Y^{-2} Z^{-4},}
\end{align*}

\begin{align*}
    \autoalign{\widehat{\sigma}(Y, Z) = p^{4} Y^{2} Z^{4} + p^{4} Y^{2} Z^{2} - p^{3} Y^{2} Z^{2} + p^{4} Y^{2} + p^{4} Z^{2} - 2 p^{3} Y Z^{2} - p^{3} Z^{2} - 2 p^{2} Y Z^{2} + p^{4} - 2 p^{3} Y - 4 p^{3} - 2 p^{2} Y + p^{4} Z^{-2} + p^{4} Y^{-2} - 2 p^{3} Y^{-1} - 2 p^{2} - p^{3} Z^{-2} - 2 p^{2} Y^{-1} + p^{4} Y^{-2} Z^{-2} - 2 p^{3} Y^{-1} Z^{-2} + 1 - p^{3} Y^{-2} Z^{-2} - 2 p^{2} Y^{-1} Z^{-2} + p^{4} Y^{-2} Z^{-4},} 
\end{align*}

\begin{align*}
    \autoalign{\widehat{\sigma^2}(Y, Z) = p^{8} Y^{4} Z^{8} + 2 p^{8} Y^{4} Z^{6} - 2 p^{7} Y^{4} Z^{6} + 3 p^{8} Y^{4} Z^{4} + 2 p^{8} Y^{2} Z^{6} - 4 p^{7} Y^{3} Z^{6} - 2 p^{7} Y^{4} Z^{4} - 2 p^{7} Y^{2} Z^{6} - 4 p^{6} Y^{3} Z^{6} + 2 p^{8} Y^{4} Z^{2} + 4 p^{8} Y^{2} Z^{4} - 8 p^{7} Y^{3} Z^{4} + p^{6} Y^{4} Z^{4} - 2 p^{7} Y^{4} Z^{2} - 12 p^{7} Y^{2} Z^{4} - 4 p^{6} Y^{3} Z^{4} + p^{8} Y^{4} + 6 p^{8} Y^{2} Z^{2} - 8 p^{7} Y^{3} Z^{2} + 3 p^{8} Z^{4} - 8 p^{7} Y Z^{4} + 2 p^{6} Y^{2} Z^{4} + 4 p^{5} Y^{3} Z^{4} - 14 p^{7} Y^{2} Z^{2} - 4 p^{6} Y^{3} Z^{2} - 2 p^{7} Z^{4} - 4 p^{6} Y Z^{4} + 8 p^{5} Y^{2} Z^{4} + 4 p^{8} Y^{2} - 4 p^{7} Y^{3} + 6 p^{8} Z^{2} - 16 p^{7} Y Z^{2} + 12 p^{6} Y^{2} Z^{2} + 4 p^{5} Y^{3} Z^{2} + p^{6} Z^{4} + 4 p^{5} Y Z^{4} + 6 p^{4} Y^{2} Z^{4} - 12 p^{7} Y^{2} - 4 p^{6} Y^{3} - 14 p^{7} Z^{2} + 8 p^{6} Y Z^{2} + 20 p^{5} Y^{2} Z^{2} + 2 p^{8} Y^{2} Z^{-2} + 9 p^{8} - 16 p^{7} Y + 2 p^{6} Y^{2} + 2 p^{8} Y^{-2} Z^{2} - 8 p^{7} Y^{-1} Z^{2} + 12 p^{6} Z^{2} + 32 p^{5} Y Z^{2} + 10 p^{4} Y^{2} Z^{2} - 2 p^{7} Y^{2} Z^{-2} - 16 p^{7} + 8 p^{6} Y + 8 p^{5} Y^{2} - 2 p^{7} Y^{-2} Z^{2} - 4 p^{6} Y^{-1} Z^{2} + 20 p^{5} Z^{2} + 8 p^{4} Y Z^{2} - 2 p^{3} Y^{2} Z^{2} + 6 p^{8} Z^{-2} - 8 p^{7} Y Z^{-2} + 4 p^{8} Y^{-2} - 16 p^{7} Y^{-1} + 32 p^{6} + 32 p^{5} Y + 6 p^{4} Y^{2} + 4 p^{5} Y^{-1} Z^{2} + 10 p^{4} Z^{2} - 4 p^{3} Y Z^{2} - 14 p^{7} Z^{-2} - 4 p^{6} Y Z^{-2} - 12 p^{7} Y^{-2} + 8 p^{6} Y^{-1} + 48 p^{5} + 8 p^{4} Y - 2 p^{3} Z^{2} - 4 p^{2} Y Z^{2} + 3 p^{8} Z^{-4} + 6 p^{8} Y^{-2} Z^{-2} - 16 p^{7} Y^{-1} Z^{-2} + 12 p^{6} Z^{-2} + 4 p^{5} Y Z^{-2} + p^{8} Y^{-4} - 4 p^{7} Y^{-3} + 2 p^{6} Y^{-2} + 32 p^{5} Y^{-1} + 22 p^{4} - 4 p^{3} Y - 2 p^{7} Z^{-4} - 14 p^{7} Y^{-2} Z^{-2} + 8 p^{6} Y^{-1} Z^{-2} + 20 p^{5} Z^{-2} - 4 p^{6} Y^{-3} + 8 p^{5} Y^{-2} + 8 p^{4} Y^{-1} - 8 p^{3} - 4 p^{2} Y + 4 p^{8} Y^{-2} Z^{-4} - 8 p^{7} Y^{-1} Z^{-4} + p^{6} Z^{-4} + 2 p^{8} Y^{-4} Z^{-2} - 8 p^{7} Y^{-3} Z^{-2} + 12 p^{6} Y^{-2} Z^{-2} + 32 p^{5} Y^{-1} Z^{-2} + 10 p^{4} Z^{-2} + 6 p^{4} Y^{-2} - 4 p^{3} Y^{-1} - 4 p^{2} - 12 p^{7} Y^{-2} Z^{-4} - 4 p^{6} Y^{-1} Z^{-4} - 2 p^{7} Y^{-4} Z^{-2} - 4 p^{6} Y^{-3} Z^{-2} + 20 p^{5} Y^{-2} Z^{-2} + 8 p^{4} Y^{-1} Z^{-2} - 2 p^{3} Z^{-2} - 4 p^{2} Y^{-1} + 2 p^{8} Y^{-2} Z^{-6} + 3 p^{8} Y^{-4} Z^{-4} - 8 p^{7} Y^{-3} Z^{-4} + 2 p^{6} Y^{-2} Z^{-4} + 4 p^{5} Y^{-1} Z^{-4} + 4 p^{5} Y^{-3} Z^{-2} + 10 p^{4} Y^{-2} Z^{-2} - 4 p^{3} Y^{-1} Z^{-2} + 1 - 2 p^{7} Y^{-2} Z^{-6} - 2 p^{7} Y^{-4} Z^{-4} - 4 p^{6} Y^{-3} Z^{-4} + 8 p^{5} Y^{-2} Z^{-4} - 2 p^{3} Y^{-2} Z^{-2} - 4 p^{2} Y^{-1} Z^{-2} + 2 p^{8} Y^{-4} Z^{-6} - 4 p^{7} Y^{-3} Z^{-6} + p^{6} Y^{-4} Z^{-4} + 4 p^{5} Y^{-3} Z^{-4} + 6 p^{4} Y^{-2} Z^{-4} - 2 p^{7} Y^{-4} Z^{-6} - 4 p^{6} Y^{-3} Z^{-6} + p^{8} Y^{-4} Z^{-8}
.}
\end{align*}
}
We also compute the spherical transforms of a few basic Hecke operators, obtaining
{\tiny
\begin{align*}
     \widehat{\tau}_{0,0}=1, \qquad \qquad \qquad \widehat{\tau}_{1,2} = p^{2} Y Z^{2} + p^{2} Y + p^{2} + p^{2} Y^{-1} - 1 + p^{2} Y^{-1} Z^{-2},
\end{align*}

\begin{align*}
    \widehat{\tau}_{0,2} = p^{3} Y^{2} Z^{2} + p^{3} Y Z^{2} + p^{3} Z^{2} - p^{2} Y Z^{2} + p^{3} Y + 2 p^{3} - p^{2} Y + p^{3} Y^{-1} \\
    - 2 p^{2} + p^{3} Z^{-2} - p^{2} Y^{-1} + p^{3} Y^{-1} Z^{-2} + p^{3} Y^{-2} Z^{-2} - p^{2} Y^{-1} Z^{-2},
\end{align*}

\begin{align*}
    \autoalign{\widehat{\tau}_{0,4} = p^{6} Y^{4} Z^{4} + p^{6} Y^{3} Z^{4} + p^{6} Y^{2} Z^{4} - p^{5} Y^{3} Z^{4} + p^{6} Y^{3} Z^{2} + p^{6} Y Z^{4} - p^{5} Y^{2} Z^{4} + 2 p^{6} Y^{2} Z^{2} - p^{5} Y^{3} Z^{2} + p^{6} Z^{4} - p^{5} Y Z^{4} + 2 p^{6} Y Z^{2} - 3 p^{5} Y^{2} Z^{2} + p^{6} Y^{2} + 2 p^{6} Z^{2} - 4 p^{5} Y Z^{2} + p^{4} Y^{2} Z^{2} + 2 p^{6} Y - p^{5} Y^{2} + p^{6} Y^{-1} Z^{2} - 3 p^{5} Z^{2} + 2 p^{4} Y Z^{2} + 3 p^{6} - 4 p^{5} Y - p^{5} Y^{-1} Z^{2} + p^{4} Z^{2} + p^{6} Y Z^{-2} + 2 p^{6} Y^{-1} - 5 p^{5} + 2 p^{4} Y + 2 p^{6} Z^{-2} - p^{5} Y Z^{-2} + p^{6} Y^{-2} - 4 p^{5} Y^{-1} + 3 p^{4} + 2 p^{6} Y^{-1} Z^{-2} - 3 p^{5} Z^{-2} - p^{5} Y^{-2} + 2 p^{4} Y^{-1} - p^{3} + p^{6} Z^{-4} + 2 p^{6} Y^{-2} Z^{-2} - 4 p^{5} Y^{-1} Z^{-2} + p^{4} Z^{-2} + p^{6} Y^{-1} Z^{-4} + p^{6} Y^{-3} Z^{-2} - 3 p^{5} Y^{-2} Z^{-2} + 2 p^{4} Y^{-1} Z^{-2} + p^{6} Y^{-2} Z^{-4} - p^{5} Y^{-1} Z^{-4} - p^{5} Y^{-3} Z^{-2} + p^{4} Y^{-2} Z^{-2} + p^{6} Y^{-3} Z^{-4} - p^{5} Y^{-2} Z^{-4} + p^{6} Y^{-4} Z^{-4} - p^{5} Y^{-3} Z^{-4},}
\end{align*}

\begin{align*}
    \autoalign{\widehat{\tau}_{1,4} = p^{5} Y^{3} Z^{4} + p^{5} Y^{2} Z^{4} + p^{5} Y^{3} Z^{2} + p^{5} Y Z^{4} - p^{4} Y^{2} Z^{4} + 2 p^{5} Y^{2} Z^{2} + 3 p^{5} Y Z^{2} - 2 p^{4} Y^{2} Z^{2} + p^{5} Y^{2} + 2 p^{5} Z^{2} - 3 p^{4} Y Z^{2} + 3 p^{5} Y - p^{4} Y^{2} + p^{5} Y^{-1} Z^{2} - 2 p^{4} Z^{2} + 3 p^{5} - 3 p^{4} Y + p^{5} Y Z^{-2} + 3 p^{5} Y^{-1} - 5 p^{4} + 2 p^{5} Z^{-2} + p^{5} Y^{-2} - 3 p^{4} Y^{-1} + p^{3} + 3 p^{5} Y^{-1} Z^{-2} - 2 p^{4} Z^{-2} - p^{4} Y^{-2} + p^{2} + 2 p^{5} Y^{-2} Z^{-2} - 3 p^{4} Y^{-1} Z^{-2} + p^{5} Y^{-1} Z^{-4} + p^{5} Y^{-3} Z^{-2} - 2 p^{4} Y^{-2} Z^{-2} + p^{5} Y^{-2} Z^{-4} + p^{5} Y^{-3} Z^{-4} - p^{4} Y^{-2} Z^{-4},}
\end{align*}

\begin{align*}
    \autoalign{\widehat{\tau}_{2,4} = p^{4} Y^{2} Z^{4} + p^{4} Y^{2} Z^{2} + p^{4} Y Z^{2} - p^{3} Y^{2} Z^{2} + p^{4} Y^{2} + p^{4} Z^{2} - p^{3} Y Z^{2} + p^{4} Y - p^{3} Z^{2} + 2 p^{4} - p^{3} Y + p^{4} Y^{-1} - 2 p^{3} + p^{4} Z^{-2} + p^{4} Y^{-2} - p^{3} Y^{-1} + p^{4} Y^{-1} Z^{-2} - p^{3} Z^{-2} + p^{4} Y^{-2} Z^{-2} - p^{3} Y^{-1} Z^{-2} - p^{3} Y^{-2} Z^{-2} + p^{4} Y^{-2} Z^{-4},}
\end{align*}

\begin{align*}
    \autoalign{\widehat{\tau}_{2,6} = p^{7} Y^{4} Z^{6} + p^{7} Y^{3} Z^{6} + p^{7} Y^{4} Z^{4} + p^{7} Y^{2} Z^{6} - p^{6} Y^{3} Z^{6} + 2 p^{7} Y^{3} Z^{4} - p^{6} Y^{4} Z^{4} + p^{7} Y^{4} Z^{2} + 3 p^{7} Y^{2} Z^{4} - 3 p^{6} Y^{3} Z^{4} + 2 p^{7} Y^{3} Z^{2} + 2 p^{7} Y Z^{4} - 4 p^{6} Y^{2} Z^{4} + p^{5} Y^{3} Z^{4} + 4 p^{7} Y^{2} Z^{2} - 3 p^{6} Y^{3} Z^{2} + p^{7} Z^{4} - 3 p^{6} Y Z^{4} + p^{5} Y^{2} Z^{4} + p^{7} Y^{3} + 4 p^{7} Y Z^{2} - 6 p^{6} Y^{2} Z^{2} + p^{5} Y^{3} Z^{2} - p^{6} Z^{4} + p^{5} Y Z^{4} + 3 p^{7} Y^{2} - p^{6} Y^{3} + 4 p^{7} Z^{2} - 8 p^{6} Y Z^{2} + 3 p^{5} Y^{2} Z^{2} + 4 p^{7} Y - 4 p^{6} Y^{2} + 2 p^{7} Y^{-1} Z^{2} - 6 p^{6} Z^{2} + 5 p^{5} Y Z^{2} - p^{4} Y^{2} Z^{2} + p^{7} Y^{2} Z^{-2} + 5 p^{7} - 8 p^{6} Y + p^{5} Y^{2} + p^{7} Y^{-2} Z^{2} - 3 p^{6} Y^{-1} Z^{2} + 3 p^{5} Z^{2} - p^{4} Y Z^{2} + 2 p^{7} Y Z^{-2} + 4 p^{7} Y^{-1} - 10 p^{6} + 5 p^{5} Y + p^{5} Y^{-1} Z^{2} - p^{4} Z^{2} + 4 p^{7} Z^{-2} - 3 p^{6} Y Z^{-2} + 3 p^{7} Y^{-2} - 8 p^{6} Y^{-1} + 6 p^{5} - p^{4} Y + 4 p^{7} Y^{-1} Z^{-2} - 6 p^{6} Z^{-2} + p^{5} Y Z^{-2} + p^{7} Y^{-3} - 4 p^{6} Y^{-2} + 5 p^{5} Y^{-1} - 2 p^{4} + p^{7} Z^{-4} + 4 p^{7} Y^{-2} Z^{-2} - 8 p^{6} Y^{-1} Z^{-2} + 3 p^{5} Z^{-2} - p^{6} Y^{-3} + p^{5} Y^{-2} - p^{4} Y^{-1} + p^{3} + 2 p^{7} Y^{-1} Z^{-4} - p^{6} Z^{-4} + 2 p^{7} Y^{-3} Z^{-2} - 6 p^{6} Y^{-2} Z^{-2} + 5 p^{5} Y^{-1} Z^{-2} - p^{4} Z^{-2} + 3 p^{7} Y^{-2} Z^{-4} - 3 p^{6} Y^{-1} Z^{-4} + p^{7} Y^{-4} Z^{-2} - 3 p^{6} Y^{-3} Z^{-2} + 3 p^{5} Y^{-2} Z^{-2} - p^{4} Y^{-1} Z^{-2} + 2 p^{7} Y^{-3} Z^{-4} - 4 p^{6} Y^{-2} Z^{-4} + p^{5} Y^{-1} Z^{-4} + p^{5} Y^{-3} Z^{-2} - p^{4} Y^{-2} Z^{-2} + p^{7} Y^{-2} Z^{-6} + p^{7} Y^{-4} Z^{-4} - 3 p^{6} Y^{-3} Z^{-4} + p^{5} Y^{-2} Z^{-4} + p^{7} Y^{-3} Z^{-6} - p^{6} Y^{-4} Z^{-4} + p^{5} Y^{-3} Z^{-4} + p^{7} Y^{-4} Z^{-6} - p^{6} Y^{-3} Z^{-6},}
\end{align*}

\begin{align*}
    \autoalign{\widehat{\tau}_{3,6} = p^{6} Y^{3} Z^{6} + p^{6} Y^{3} Z^{4} + p^{6} Y^{2} Z^{4} - p^{5} Y^{3} Z^{4} + p^{6} Y^{3} Z^{2} + p^{6} Y Z^{4} - p^{5} Y^{2} Z^{4} + p^{6} Y^{2} Z^{2} - p^{5} Y^{3} Z^{2} - p^{5} Y Z^{4} + p^{6} Y^{3} + 2 p^{6} Y Z^{2} - 2 p^{5} Y^{2} Z^{2} + p^{6} Y^{2} + p^{6} Z^{2} - 3 p^{5} Y Z^{2} + p^{4} Y^{2} Z^{2} + 2 p^{6} Y - p^{5} Y^{2} + p^{6} Y^{-1} Z^{2} - 2 p^{5} Z^{2} + p^{4} Y Z^{2} + 2 p^{6} - 3 p^{5} Y - p^{5} Y^{-1} Z^{2} + p^{4} Z^{2} + p^{6} Y Z^{-2} + 2 p^{6} Y^{-1} - 3 p^{5} + p^{4} Y + p^{6} Z^{-2} - p^{5} Y Z^{-2} + p^{6} Y^{-2} - 3 p^{5} Y^{-1} + 2 p^{4} + 2 p^{6} Y^{-1} Z^{-2} - 2 p^{5} Z^{-2} + p^{6} Y^{-3} - p^{5} Y^{-2} + p^{4} Y^{-1} - p^{3} + p^{6} Y^{-2} Z^{-2} - 3 p^{5} Y^{-1} Z^{-2} + p^{4} Z^{-2} + p^{6} Y^{-1} Z^{-4} + p^{6} Y^{-3} Z^{-2} - 2 p^{5} Y^{-2} Z^{-2} + p^{4} Y^{-1} Z^{-2} + p^{6} Y^{-2} Z^{-4} - p^{5} Y^{-1} Z^{-4} - p^{5} Y^{-3} Z^{-2} + p^{4} Y^{-2} Z^{-2} + p^{6} Y^{-3} Z^{-4} - p^{5} Y^{-2} Z^{-4} - p^{5} Y^{-3} Z^{-4} + p^{6} Y^{-3} Z^{-6},}
\end{align*}

\begin{align*}
    \autoalign{\widehat{\tau}_{4,8} = p^{8} Y^{4} Z^{8} + p^{8} Y^{4} Z^{6} + p^{8} Y^{3} Z^{6} - p^{7} Y^{4} Z^{6} + p^{8} Y^{4} Z^{4} + p^{8} Y^{2} Z^{6} - p^{7} Y^{3} Z^{6} + p^{8} Y^{3} Z^{4} - p^{7} Y^{4} Z^{4} - p^{7} Y^{2} Z^{6} + p^{8} Y^{4} Z^{2} + 2 p^{8} Y^{2} Z^{4} - 2 p^{7} Y^{3} Z^{4} + p^{8} Y^{3} Z^{2} - p^{7} Y^{4} Z^{2} + p^{8} Y Z^{4} - 3 p^{7} Y^{2} Z^{4} + p^{6} Y^{3} Z^{4} + p^{8} Y^{4} + 2 p^{8} Y^{2} Z^{2} - 2 p^{7} Y^{3} Z^{2} + p^{8} Z^{4} - 2 p^{7} Y Z^{4} + p^{6} Y^{2} Z^{4} + p^{8} Y^{3} + 2 p^{8} Y Z^{2} - 4 p^{7} Y^{2} Z^{2} + p^{6} Y^{3} Z^{2} - p^{7} Z^{4} + p^{6} Y Z^{4} + 2 p^{8} Y^{2} - p^{7} Y^{3} + 2 p^{8} Z^{2} - 4 p^{7} Y Z^{2} + 2 p^{6} Y^{2} Z^{2} + 2 p^{8} Y - 3 p^{7} Y^{2} + p^{8} Y^{-1} Z^{2} - 4 p^{7} Z^{2} + 3 p^{6} Y Z^{2} + p^{8} Y^{2} Z^{-2} + 3 p^{8} - 4 p^{7} Y + p^{6} Y^{2} + p^{8} Y^{-2} Z^{2} - 2 p^{7} Y^{-1} Z^{2} + 2 p^{6} Z^{2} - p^{5} Y Z^{2} + p^{8} Y Z^{-2} - p^{7} Y^{2} Z^{-2} + 2 p^{8} Y^{-1} - 5 p^{7} + 3 p^{6} Y - p^{7} Y^{-2} Z^{2} + p^{6} Y^{-1} Z^{2} + 2 p^{8} Z^{-2} - 2 p^{7} Y Z^{-2} + 2 p^{8} Y^{-2} - 4 p^{7} Y^{-1} + 3 p^{6} - p^{5} Y + 2 p^{8} Y^{-1} Z^{-2} - 4 p^{7} Z^{-2} + p^{6} Y Z^{-2} + p^{8} Y^{-3} - 3 p^{7} Y^{-2} + 3 p^{6} Y^{-1} - p^{5} + p^{8} Z^{-4} + 2 p^{8} Y^{-2} Z^{-2} - 4 p^{7} Y^{-1} Z^{-2} + 2 p^{6} Z^{-2} + p^{8} Y^{-4} - p^{7} Y^{-3} + p^{6} Y^{-2} - p^{5} Y^{-1} + p^{8} Y^{-1} Z^{-4} - p^{7} Z^{-4} + p^{8} Y^{-3} Z^{-2} - 4 p^{7} Y^{-2} Z^{-2} + 3 p^{6} Y^{-1} Z^{-2} + 2 p^{8} Y^{-2} Z^{-4} - 2 p^{7} Y^{-1} Z^{-4} + p^{8} Y^{-4} Z^{-2} - 2 p^{7} Y^{-3} Z^{-2} + 2 p^{6} Y^{-2} Z^{-2} - p^{5} Y^{-1} Z^{-2} + p^{8} Y^{-3} Z^{-4} - 3 p^{7} Y^{-2} Z^{-4} + p^{6} Y^{-1} Z^{-4} - p^{7} Y^{-4} Z^{-2} + p^{6} Y^{-3} Z^{-2} + p^{8} Y^{-2} Z^{-6} + p^{8} Y^{-4} Z^{-4} - 2 p^{7} Y^{-3} Z^{-4} + p^{6} Y^{-2} Z^{-4} + p^{8} Y^{-3} Z^{-6} - p^{7} Y^{-2} Z^{-6} - p^{7} Y^{-4} Z^{-4} + p^{6} Y^{-3} Z^{-4} + p^{8} Y^{-4} Z^{-6} - p^{7} Y^{-3} Z^{-6} - p^{7} Y^{-4} Z^{-6} + p^{8} Y^{-4} Z^{-8},}
\end{align*}
}
Checking the desired relations given in \cref{lem:DecompositionsUsingComputer} then becomes a tedious but straightforward task, which is also handled by our program.

%%%%%%%%%%%%%%%%%%%%%%%%%%%%%%%%%%%%%%%%%%%%%%%%%%%%%%%

\bibliographystyle{abbrv}
\bibliography{references}

\end{document}